\documentclass[11pt,reqno,noamsfonts]{amsart}

\usepackage[T1]{fontenc}
\usepackage[utf8]{inputenc}

\usepackage{newtxtext}
\usepackage{newtxmath}

\usepackage{microtype,hyperref}

\usepackage[
  paperwidth=6.15in,
  paperheight=9.25in,
  inner=0.52in,
  outer=0.52in,
  top=0.78in,
  bottom=0.82in,
  footskip=0.48in
]{geometry}

\usepackage{amsmath,amsthm,mathtools,mathrsfs,bm}

\usepackage{booktabs}
\usepackage{enumitem}
\usepackage{xcolor}

\usepackage{listings}
\usepackage[scaled=0.88]{inconsolata}

\definecolor{leankeyword}{RGB}{36,73,143}
\definecolor{leancomment}{RGB}{80,120,80}
\definecolor{leanstring}{RGB}{145,65,35}
\definecolor{leanrule}{RGB}{190,190,190}
\definecolor{leanback}{RGB}{248,248,248}

\lstdefinelanguage{Lean4}{
  sensitive=true,
  morekeywords={
    import,set_option,namespace,end,abbrev,def,theorem,lemma,
    structure,class,instance,where,extends,variable,section,
    by,have,show,exact,refine,intro,rintro,obtain,rcases,
    let,fun,if,then,else,match,with,do,return,from,
    Type,Prop,true,false
  },
  morecomment=[l]{--},
  morecomment=[s]{/-}{-/},
  morestring=[b]"
}

\lstdefinestyle{lean4style}{
  language=Lean4,
  basicstyle=\ttfamily\scriptsize,
  keywordstyle=\color{leankeyword}\bfseries,
  commentstyle=\color{leancomment}\itshape,
  stringstyle=\color{leanstring},
  backgroundcolor=\color{leanback},
  frame=single,
  rulecolor=\color{leanrule},
  numbers=left,
  numberstyle=\tiny\color{black!55},
  numbersep=8pt,
  xleftmargin=1.8em,
  framexleftmargin=1.4em,
  breaklines=true,
  breakatwhitespace=false,
  columns=fullflexible,
  keepspaces=true,
  showstringspaces=false,
  tabsize=2,
  upquote=true,
  literate=
    {¬}{{$\neg$}}1
    {ε}{{$\varepsilon$}}1
    {η}{{$\eta$}}1
    {⁻¹}{{$^{-1}$}}2
    {₁}{{$_1$}}1
    {₂}{{$_2$}}1
    {ℝ}{{$\mathbb{R}$}}1
    {→}{{$\to$}}1
    {∀}{{$\forall$}}1
    {∃}{{$\exists$}}1
    {∞}{{$\infty$}}1
    {∧}{{$\wedge$}}1
    {≤}{{$\le$}}1
    {⟨}{{$\langle$}}1
    {⟩}{{$\rangle$}}1
    {ν}{{$\nu$}}1
    {₀}{{$_0$}}1
    {ℕ}{{$\mathbb{N}$}}1
    {∈}{{$\in$}}1
    {⊆}{{$\subseteq$}}1
}

\usepackage{tikz}
\usetikzlibrary{arrows.meta,calc,patterns,positioning}

\usepackage{titlesec}

\titleformat{\section}
  {\centering\normalfont\large\bfseries}
  {\thesection.}
  {0.55em}
  {}

\titlespacing*{\section}
  {0pt}
  {2.2em}
  {1.25em}

\titleformat{\subsection}
  {\normalfont\normalsize\bfseries}
  {\thesubsection.}
  {0.55em}
  {}

\titlespacing*{\subsection}
  {0pt}
  {1.7em}
  {0.7em}

\usepackage[nameinlink,capitalize,noabbrev]{cleveref}

\hypersetup{
  colorlinks=true,
  linkcolor=blue!45!black,
  citecolor=green!30!black,
  urlcolor=blue!55!black,
  pdftitle={The Physical Cutoff Does Not Restore Homogenization},
  pdfauthor={Michele Caprio}
}

\allowdisplaybreaks

\setlist[itemize]{
  leftmargin=2em,
  itemsep=0.25em,
  topsep=0.4em
}

\setlist[enumerate]{
  leftmargin=2.35em,
  itemsep=0.25em,
  topsep=0.4em
}

\newtheorem{theorem}{Theorem}[section]
\newtheorem{proposition}[theorem]{Proposition}
\newtheorem{lemma}[theorem]{Lemma}
\newtheorem{corollary}[theorem]{Corollary}

\theoremstyle{definition}
\newtheorem{definition}[theorem]{Definition}

\theoremstyle{remark}
\newtheorem{remark}[theorem]{Remark}

\AddToHook{env/proposition/begin}{\crefalias{theorem}{proposition}}
\AddToHook{env/lemma/begin}{\crefalias{theorem}{lemma}}
\AddToHook{env/corollary/begin}{\crefalias{theorem}{corollary}}
\AddToHook{env/conjecture/begin}{\crefalias{theorem}{conjecture}}
\AddToHook{env/assumption/begin}{\crefalias{theorem}{assumption}}
\AddToHook{env/definition/begin}{\crefalias{theorem}{definition}}
\AddToHook{env/example/begin}{\crefalias{theorem}{example}}
\AddToHook{env/remark/begin}{\crefalias{theorem}{remark}}

\newcommand{\R}{\mathbb R}
\newcommand{\T}{\mathbb T}
\newcommand{\Pp}{\mathcal P}

\newcommand{\dist}{\operatorname{dist}}
\newcommand{\osc}{\operatorname{osc}}

\newcommand{\clco}{\overline{\operatorname{co}}}
\newcommand{\Dplus}{D^+}
\newcommand{\e}{\varepsilon}

\newcommand{\Hunc}{H_{\mathrm{unc}}}
\newcommand{\Hcut}{H_{+}}
\newcommand{\Hhat}{\widehat H}
\newcommand{\Gunc}{G_{\mathrm{unc}}}
\newcommand{\Gcut}{G_{+}}

\newcommand{\upperE}{\overline{\mathbb E}}

\title[The Physical Cutoff Does Not Restore Homogenization]
{The Physical Cutoff Does Not Restore Homogenization:\\
Phase-Dependent Burning in the Strain $G$-Equation}

\author{Michele Caprio}
\address{Department of Computer Science, University of Warwick, United Kingdom}
\email{Michele.Caprio@warwick.ac.uk}

\date{}
\subjclass[2020]{35B27, 35F21, 49L25, 60A05, 68T37}
\keywords{Hamilton-Jacobi homogenization, strain $G$-equation, turbulent combustion, physical cutoff, cellular flow, imprecise probability, credal sets, robust control, average reward}

\begin{document}

\begin{abstract}
We disprove the expectation stated by Xin, Yu, and Ronney that the
physical positive part strain $G$-equation should possess an effective
burning velocity in cellular flows.  For the standard cellular flow in
dimension two
  $V_A(x_1,x_2)
  =A(-\sin x_1\cos x_2,\cos x_1\sin x_2)$,
if
\[
  0<d<\frac{20}{399},
  \qquad
  \sqrt{1+4d^2}<Ad\le1+\frac d{10},
\]
then, for every unit planar slope, the periodic correction develops
oscillations at least linearly in time.  The solution remains bounded
below on an explicit horizontal channel through $(\pi,0)$, while at
$(\pi/2,0)$ it decreases at rate at least $CA/\log A$, with $C>0$
universal.  The same conclusions hold for arbitrary continuous periodic
perturbations of planar initial data.  Under the physical scaling
$V_A(x/\varepsilon)$ and $d_\varepsilon=\varepsilon d$, an order one
value gap persists between points at distance $O(\varepsilon)$ at every
positive macroscopic time, so the rescaled solutions have no locally
uniformly convergent subsequence.
The proof uses the Hamiltonian sandwich
$H_{\mathrm{unc}}\le H_+\le\widehat H$.  The upper comparator $\widehat H$ is a
rectangular support function, equivalently an upper expectation over a
state-dependent credal set, whose reversed control dynamics possess an
invariant comparison channel.  We also prove that, for any $C^2$
incompressible periodic flow, every $\varepsilon$-outward barrier
certificate has covering radius at most $2d\varepsilon$ for all
sufficiently small $\varepsilon$.  We further discuss implications for
statistics and machine learning: the construction shows that
rectangular, time-consistent local uncertainty need not imply forgetting
of the initial state in the long run, highlighting the need for
additional global stability or ergodicity conditions in robust
sequential decision making.  Two Lean~4 appendices record conditional
formalizations of a sufficient $p=e_1$ subregime of the nonhomogenization result
and of the logical assembly of the rigidity theorem for barrier
certificates.
\end{abstract}

\maketitle

\begingroup
\setcounter{tocdepth}{1}
\renewcommand{\numberline}[1]{}
\tableofcontents
\endgroup

\section{Introduction}\label{sec:introduction}
A premixed flame is a flame in which the fuel and oxidizer are already
mixed before combustion occurs. Such a flame can be idealized as a moving surface separating burned
from unburned material (more precisely, a hypersurface in an 
$n$-dimensional space). Let
$X\subseteq\mathbb R^n$, $n\geq2$, denote the spatial domain. In a
level set description, the evolving flame is encoded by a scalar
function
\[
  G:X\times\mathbb R_+\to\mathbb R.
\]
Thus, to every position $x$ and time $t$, the function assigns a real
number $G(x,t)$. With a fixed sign convention, $G(x,t)<0$ means that
$x$ lies in the burned region at time $t$, $G(x,t)>0$ means that it lies
in the unburned region, and
\[
  \Gamma_t=\{x:G(x,t)=0\}
\]
is the flame front itself.
The motion of the surrounding gas is described by a velocity field
\[
  V:X\to\mathbb R^n.
\]
For every spatial point $x$, the vector $V(x)$ specifies the speed and
direction in which the gas near $x$ is moving; its $n$ components give
the velocities along the $n$ spatial directions. If the gas were moving
everywhere with exactly the same velocity, it would simply carry the
flame along. In general, however, the velocity varies from one point to
another, and this variation can also stretch, compress, or rotate small
pieces of gas.

Before accounting for how stretching and compression affect the burning
speed, the simplest first order model for the flame is the inviscid
$G$-equation
\begin{align}\label{inviscid-eq}
    G_t+|DG|+V(x)\cdot DG=0.
\end{align}
Here $G_t=\partial G/\partial t$ describes how the level set function changes with time, while
$DG = (\partial G/\partial x_1, \ldots, \partial G/\partial x_n )$ is its spatial gradient. At a point of the flame front for which
$DG\neq0$,
  $\nu=\frac{DG}{|DG|}$
is a unit vector perpendicular to the front.

The flame moves for two distinct reasons. First, even if the surrounding
gas were completely at rest, combustion would make the flame spread into
the unburned mixture. The speed at which a smooth flame propagates through
a motionless premixed gas is called its {\em laminar burning speed}. We use
this speed as our unit of velocity, so that it is represented by a factor $1$ in the equation. This burning-induced motion is represented
by the term $|DG|$. Second, the surrounding gas may itself be moving and
therefore carries the flame front with it. This transport by the gas is
represented by the term $V(x)\cdot DG$. Thus the flame both burns through
the fresh mixture and is carried by the surrounding flow. The term
``inviscid'' refers here to this first order $G$-equation, before
diffusive or curvature corrections are added.

The basic equation \eqref{inviscid-eq} does not yet account for the fact that the burning
speed itself can be affected by the way the surrounding gas is moving.
Nearby points of the gas may have slightly different velocities, so that
a small piece of material can be stretched in some directions, compressed
in others, or rotated. To describe these local changes in velocity, we
use the Jacobian matrix
\[
  DV(x)=
  \left(
    \frac{\partial V_i}{\partial x_j}(x)
  \right)_{i,j}.
\]
Its entries record how each component of the gas velocity changes as one
moves in each spatial direction.

The matrix $DV(x)$ contains both deformation and rotation. It can be
decomposed as
\[
  DV(x)
  =
  \frac{DV(x)+DV(x)^{\mathsf T}}2
  +
  \frac{DV(x)-DV(x)^{\mathsf T}}2.
\]
The second matrix (the one after the $+$ sign) describes local rigid rotation: it can change the
direction joining two nearby fluid particles without changing their
distance. The first matrix,
\[
  S_V(x)
  \coloneqq 
  \frac{DV(x)+DV(x)^{\mathsf T}}2,
\]
is called the {\em strain tensor}. It describes the part of the local fluid
motion that changes distances between nearby particles and hence
stretches or compresses material.
At the flame front, the scalar
\[
  \nu^{\mathsf T}S_V(x)\nu
\]
measures this stretching or compression in the direction $\nu$
perpendicular to the front. This deformation can change the rate at
which the flame burns through the unburned mixture.

A standard first order model assumes that the resulting change in
burning speed is proportional to this stretching or compression in the
direction perpendicular to the flame front. The positive parameter $d$, called the {\em Markstein length} \cite{Markstein1964}, determines how
strongly this deformation affects the burning speed. Starting from the baseline burning speed $1$, the
strain-corrected speed is therefore
\[
  1+d \nu^{\mathsf T}S_V(x)\nu.
\]
Since
  $\nu=\frac{DG}{|DG|}$,
this can equivalently be written as
\[
  1+d\frac{DG^{\mathsf T}S_V(x)DG}{|DG|^2}.
\]

Strong compression can make this strain-corrected burning speed
negative. Taken literally, that would assign the flame a negative
burning speed, corresponding to a reversal of burning rather than local
extinction.
The physical model therefore truncates the corrected burning speed at
zero. Writing
  $(a)_+\coloneqq \max\{a,0\}$,
one obtains the physical strain $G$-equation
\begin{equation}\label{eq:physical-intro}
  G_t+
  \left(
    1+d\frac{DG^{\mathsf T}S_V(x)DG}{|DG|^2}
  \right)_+|DG|
  +V(x)\cdot DG=0.
\end{equation}
We can see this as a refinement of the inviscid equation \eqref{inviscid-eq}. Thus, sufficiently strong compression may completely suppress the
flame's  local burning motion, but it cannot make the flame burn
backwards. Even where burning is locally suppressed, the surrounding
gas may still move the front through the transport term $V(x)\cdot DG$. The geometry and the different contributions to the motion of the
flame front are summarized in \Cref{fig:strain-G-geometry}.

\begin{figure}[h!]
\centering
\includegraphics[width=.8\textwidth]{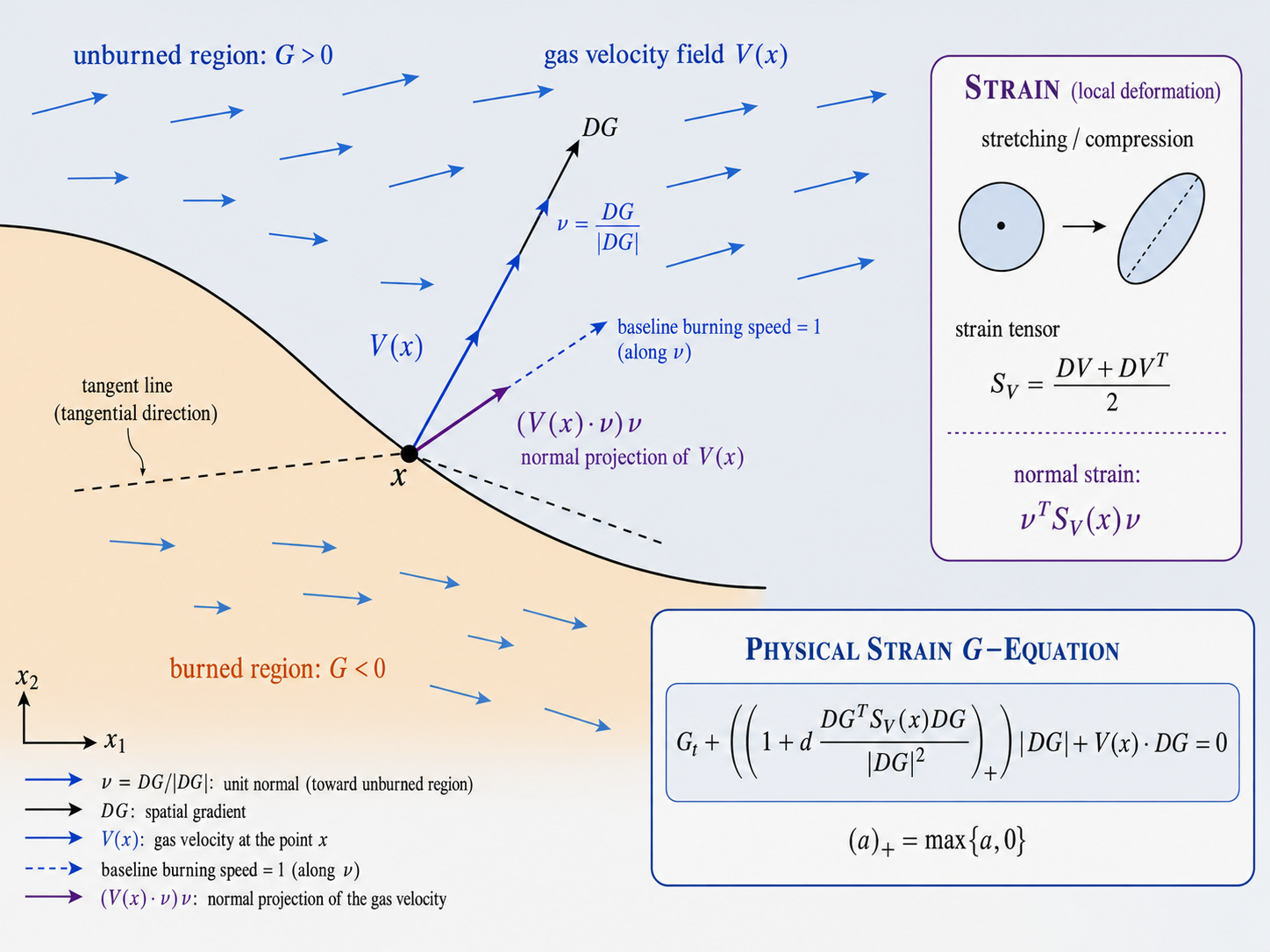}
\caption{
Schematic of the physical strain $G$-equation \eqref{eq:physical-intro}. The zero level set
$\Gamma_t$ represents the flame front, with unit normal
$\nu=DG/|DG|$. The surrounding flow $V$ carries the front, while the
normal strain $\nu^{\mathsf T}S_V\nu$ modifies its local burning speed.
The positive part cutoff suppresses burning under sufficiently strong
compression without allowing reversal.
}
\label{fig:strain-G-geometry}
\end{figure}

It is useful to write \eqref{eq:physical-intro} in the Hamilton-Jacobi
form
\[
  G_t+H(x,DG)=0,
\]
where
\[
  H(x,q)
  =
  \left(
    1+d\frac{q^{\mathsf T}S_V(x)q}{|q|^2}
  \right)_+|q|
  +V(x)\cdot q,
  \qquad q\neq0,
\]
with $H(x,0)=0$. The function $H$ is called the {\em Hamiltonian}.

Two features of this Hamiltonian make it particularly difficult to determine whether the flame has a single effective propagation speed at large times. First, in some directions the surrounding gas may compress the flame
strongly enough that
\[
  1+d \nu^{\mathsf T}S_V(x)\nu\le0.
\]
The positive part operation in \eqref{eq:physical-intro} then replaces
this nonpositive value by zero. Thus the local burning speed vanishes
in that direction. The same is true in the opposite direction $-\nu$,
because
  $(-\nu)^{\mathsf T}S_V(x)(-\nu)
  =
  \nu^{\mathsf T}S_V(x)\nu$.
Once burning has vanished, only the contribution from the motion of the
surrounding gas remains. This contribution has opposite signs in the
two directions $\nu$ and $-\nu$, since
  $V(x)\cdot(-\nu)=-V(x)\cdot\nu$.
Therefore, in at least one of these two directions, it is nonpositive.
Increasing the magnitude $|q|$ of the gradient in that direction does
not force $H(x,q)$ to become arbitrarily large and positive. In
Hamilton-Jacobi terminology, this possible failure of $H(x,q)$ to tend
to $+\infty$ as $|q|\to\infty$ is called \emph{noncoercivity}.

Second, the strain correction depends on the direction $q/|q|$ of the
gradient. Changing this direction changes how strongly the surrounding
flow stretches or compresses the flame. In some directions the flame
burns normally, while in others compression can reduce the burning speed
all the way to zero. Consequently, the Hamiltonian can change sharply
as the gradient direction varies, and its value at an intermediate
direction need not lie below the corresponding average of its values in
two other directions. In Hamilton-Jacobi terminology, $H(x,q)$ is
therefore generally {\em nonconvex in $q$}. The simultaneous absence of coercivity and convexity therefore removes
two of the main mathematical structures normally used to understand
the large scale behavior of such equations.

To study the large scale motion of the flame, we start from the simplest
possible initial shape: a flat front. Fix a unit vector
$p\in\mathbb R^n$ and take
  $G(x,0)=p\cdot x$.
Since the flame front is the zero level set of $G$, initially
  $\Gamma_0=\{x:p\cdot x=0\}$.
This is a hyperplane perpendicular to $p$;
thus, the initial flame has
no wrinkles or curvature, and $p$ specifies the direction perpendicular
to it. Initial data of this form are called \emph{planar initial data}.

We also assume that the surrounding flow is periodic, meaning that the
same pattern of gas motion repeats from one spatial cell to the next.
After choosing the size of a periodic cell as the unit of length, this
can be written as
\[
  V(x+k)=V(x),
  \qquad\text{for every }k\in\mathbb Z^n.
\]
Thus the flame repeatedly encounters the same pattern of transport,
stretching, and compression as it moves through space.

Although $G(x,t)$ itself is not periodic because of the linear term
$p\cdot x$, its deviation from the original plane,
\[
  u(x,t)\coloneqq G(x,t)-p\cdot x,
\]
is periodic. In other words, the flow may wrinkle the initially flat
flame inside each periodic cell, but the same cell scale pattern of
deviations is repeated throughout space.

The central large scale question we address in this work is whether the cell scale wrinkling of
the flame eventually becomes negligible when compared with its overall
motion. More precisely, we ask whether there exists a single number
$\overline H(p)$ such that
\begin{equation}\label{eq:effective-speed-intro}
  \frac{G(x,t)-p\cdot x}{t}
  \longrightarrow -\overline H(p)
  \qquad\text{uniformly in }x
  \text{ as }t\to\infty.
\end{equation}
If this happens, then in the long run
\[
  G(x,t)\approx p\cdot x-t \overline H(p).
\]
The corresponding zero level set, then, becomes approximately
\[
  \{x:p\cdot x=t \overline H(p)\},
\]
which is a flat front moving in the direction $p$ at the constant speed
$\overline H(p)$. Thus, despite all the complicated wrinkling produced
at the scale of individual periodic cells, the flame has a single
deterministic propagation speed when viewed over sufficiently large
distances and times. The number $\overline H(p)$ is called the
\emph{effective burning velocity} in the direction $p$.

The word ``uniformly'' in \eqref{eq:effective-speed-intro} is important.
It means that the same long run rate is obtained regardless of where
$x$ lies inside the periodic cell; equivalently,
\[
  \sup_x
  \left|
    \frac{G(x,t)-p\cdot x}{t}
    +\overline H(p)
  \right|
  \longrightarrow0.
\]
Thus, no particular location within the repeated microscopic environment
is allowed to retain a different asymptotic propagation rate.

The flows considered in the open problem are also assumed to be
\emph{incompressible}. This means that small volumes of gas are
preserved as they move with the flow, even though they may be stretched
in some directions and compressed in others. Mathematically,
incompressibility is expressed by
\[
  \operatorname{div}V
  \coloneqq 
  \sum_{i=1}^n
  \frac{\partial V_i}{\partial x_i}
  =0.
\]

Xin, Yu, and Ronney explicitly expected the effective burning velocity
to exist for cellular flows and asked in Question~11 whether it exists
for general incompressible periodic flows in all dimensions
\cite[Section~5.2, Question~11]{XinYuRonney2024}.  We disprove that
expectation already for the standard two dimensional cellular flow.
Writing $\mu=Ad$, if
\[
  0<d<\frac{20}{399},
  \qquad
  \sqrt{1+4d^2}<\mu\le1+\frac d{10},
\]
then, for every unit planar direction $p$, the periodic correction
$u^p(x,t)=G_+^p(x,t)-p\cdot x$ satisfies
\[
  \liminf_{t\to\infty}
  \frac{\osc_{\T^2_{2\pi}}u^p(\cdot,t)}{t}
  \ge C\frac A{\log A},
\]
where $\osc_{\T^2_{2\pi}} u^p(\cdot,t)
\coloneqq 
\sup_{x\in\T^2_{2\pi}} u^p(x,t) - \inf_{x\in\T^2_{2\pi}} u^p(x,t)$ measures the gap between the largest and smallest values of the periodic correction across one $2\pi\times 2\pi$ periodic cell at time $t$, and $C>0$ is universal.  The slow estimate holds on an explicit whole
channel through $(\pi,0)$, not merely at one point.  The same phase
separation persists under arbitrary continuous periodic perturbations
of the planar initial datum.  After the physical small scale rescaling,
an order one value gap persists across an $O(\varepsilon)$ distance at
every positive macroscopic time, which rules out locally uniform
subsequential convergence.  We also prove a general result showing that, in incompressible periodic
flows, any region that nearly blocks outward propagation must come close
to every point within one period of the flow. The cellular
flow and the precise statements are given in \Cref{main-overv}.

\subsection{Organization}

\Cref{main-overv} states the two main results and explains the proof
strategy, while \Cref{sec:literature} reviews the related literature.
Sections~\ref{sec:setting}-\ref{completion} prove linear microscopic
phase separation in every planar direction, and its stability under
periodic perturbations of planar data.  \Cref{sec:no barrier} proves the
rigidity theorem for outward barrier certificates, and
\Cref{sec:consequences} develops the small scale noncompactness and
fixed pair consequences.  Appendix~\ref{app:background} collects the needed 
background and conventions, Appendix~\ref{app:control} proves the
control representation for planar periodic data, and
Appendices~\ref{app:lean-formalization} and
\ref{app:lean-no-barrier-formalization} record conditional Lean~4
formalizations of a sufficient $p=e_1$ subregime of the nonhomogenization result
and of the logical assembly of the barrier certificate rigidity theorem,
respectively.

\section{Main Theorems: Overview}\label{main-overv}
We specialize to two spatial dimensions and construct the flow that
will provide our counterexample.  We use the standard two dimensional cellular flow
\begin{equation}\label{cellular_flow}
    V_0(x_1,x_2)
  \coloneqq 
  \bigl(
    -\sin x_1\cos x_2, 
    \cos x_1\sin x_2
  \bigr).
\end{equation}
This velocity field generates a repeating array of circulating cells; it is the cellular flow studied by Xin and Yu
\cite{XinYu2014}. Because sine and cosine are $2\pi$-periodic, this flow repeats its
spatial pattern every $2\pi$ in each coordinate. We can therefore
identify points that differ by whole periods and work on the
$2\pi$-periodic torus
\[
  \mathbb T^2_{2\pi}
  \coloneqq 
  (\mathbb R/2\pi\mathbb Z)^2,
\]
which can be viewed as a single $2\pi\times2\pi$ periodic cell with
opposite edges identified.
For $A>0$, we set
\[
  V_A\coloneqq A V_0,
\]
so that $A$ controls the strength of the flow without changing its
spatial pattern.
Since its components are products of sine and cosine functions,
$V_0$ is smooth (indeed, $C^\infty$), and therefore so is
$V_A=A V_0$. It is also
incompressible, since
\[
\begin{aligned}
  \operatorname{div}V_A
  &=
  A\left[
    \frac{\partial}{\partial x_1}
      (-\sin x_1\cos x_2)
    +
    \frac{\partial}{\partial x_2}
      (\cos x_1\sin x_2)
  \right]\\
  &=A\bigl(
    -\cos x_1\cos x_2
    +\cos x_1\cos x_2
  \bigr)
  =0.
\end{aligned}
\]
Thus $V_A$ satisfies precisely the smoothness, periodicity, and incompressibility
assumptions imposed on the flows considered in \cite[Question~11]{XinYuRonney2024}.
Its Jacobian matrix is
\[
  DV_A(x)
  =
  A
  \begin{pmatrix}
    -\cos x_1\cos x_2 & \sin x_1\sin x_2\\
    -\sin x_1\sin x_2 & \cos x_1\cos x_2
  \end{pmatrix}.
\]
Taking its symmetric part gives the strain tensor
\begin{equation}\label{eq:strain-tensor-intro}
  S_{V_A}(x)
  =
  A
  \begin{pmatrix}
    -\Phi(x)&0\\
    0&\Phi(x)
  \end{pmatrix},
  \qquad
  \Phi(x)\coloneqq \cos x_1\cos x_2.
\end{equation}
In particular, the strain tensor has no off-diagonal terms in these
coordinates, a simple structure that will be central to the trapping
mechanism in our counterexample.

\begin{theorem}[Linear Microscopic Phase Separation in Every Direction]
\label{thm:main}
Let
\begin{equation}\label{eq:parameter-window}
  0<d<\frac{20}{399},
  \qquad
  \sqrt{1+4d^2}<\mu\coloneqq Ad\le1+\frac d{10}.
\end{equation}
Set
\[
  r_\mu\coloneqq\arccos\frac1\mu,
  \qquad
  \mathcal D_\mu
  \coloneqq[\pi-r_\mu,\pi+r_\mu]\times\{0\},
\]
and
\[
  x_{\mathrm s}\coloneqq(\pi,0),
  \qquad
  x_{\mathrm f}\coloneqq\left(\frac\pi2,0\right).
\]
For every unit vector $p=(p_1,p_2)\in\mathbb R^2$, let $G_+^p$ denote
the viscosity solution of
\begin{equation}\label{eq:cutoff-main}
\begin{cases}
(G_+^p)_t+
\left(
  1+d\dfrac{D(G_+^p)^{\mathsf T}
  S_{V_A}(x)D G_+^p}{|D G_+^p|^2}
\right)_+
|D G_+^p|
+V_A(x)\cdot D G_+^p=0,\\[0.4em]
G_+^p(x,0)=p\cdot x.
\end{cases}
\end{equation}
At points where $D G_+^p=0$, the Hamiltonian is understood through its
continuous extension, whose value at zero gradient is $0$.  Then,
\begin{equation}\label{eq:slow-main}
  G_+^p(x,t)
  \ge \pi p_1-r_\mu|p_1|,
  \qquad
  x\in\mathcal D_\mu,
  \quad t\ge0,
\end{equation}
and
\begin{equation}\label{eq:fast-main}
  \liminf_{t\to\infty}
  -\frac{G_+^p(x_{\mathrm f},t)}t
  \ge C\frac A{\log A},
\end{equation}
where $C>0$ is universal and independent of $p$, $A$, and $d$.
Consequently, for
$u^p(x,t)=G_+^p(x,t)-p\cdot x$,
\begin{equation}\label{eq:oscillation-main}
  \liminf_{t\to\infty}
  \frac{\osc_{\T^2_{2\pi}}u^p(\cdot,t)}t
  \ge C\frac A{\log A}.
\end{equation}
In particular, no phase-independent effective burning velocity exists.
\end{theorem}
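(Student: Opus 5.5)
The plan is to trap $G_+^p$ between two solutions with explicit control representations, using the sandwich $\Hunc\le\Hcut\le\Hhat$ and the comparison principle for the periodic correction $u=G-p\cdot x$. Comparison holds on the torus because the Hamiltonians are Lipschitz in $q$ with periodic coefficients, and the control representation for planar periodic data is given in Appendix~\ref{app:control}. Since a larger Hamiltonian gives a smaller solution, $\Hcut\le\Hhat$ yields $G_+^p\ge\widehat G^p$. This gives the slow bound \eqref{eq:slow-main}. Likewise $\Hunc\le\Hcut$ yields $G_+^p\le G_{\mathrm{unc}}^p$, which gives the fast bound \eqref{eq:fast-main}. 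The oscillation estimate \eqref{eq:oscillation-main} then follows at once. Indeed, $u^p(\cdot,t)$ is bounded below on $\mathcal D_\mu$ uniformly in $t$, since $p\cdot x$ is bounded there, while $u^p(x_{\mathrm f},t)=G_+^p(x_{\mathrm f},t)-\tfrac\pi2p_1\le -tCA/\log A+o(t)$.

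\emph{Upper comparator and slow channel.} From \eqref{eq:strain-tensor-intro}, with $\nu=q/|q|$ we have $1+d\nu^{\mathsf T}S\nu=\nu_1^2(1-\mu\Phi)+\nu_2^2(1+\mu\Phi)$. By convexity of $(\cdot)_+$ and $\nu_i^2|q|\le|q_i|$,
\[
  \Hcut(x,q)\le \Hhat(x,q)\coloneqq V_A(x)\cdot q+(1-\mu\Phi(x))_+|q_1|+(1+\mu\Phi(x))_+|q_2| .
\]
This is $V_A\cdot q$ plus the support function of the rectangle $[-a(x),a(x)]\times[-b(x),b(x)]$, where $a=(1-\mu\Phi)_+$ and $b=(1+\mu\Phi)_+$. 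Consequently $\widehat G^p(x,t)=\inf p\cdot y(t)$, the infimum taken over trajectories $\dot y=-V_A(y)+\alpha$ with $y(0)=x$ and $|\alpha_1|\le a(y)$, $|\alpha_2|\le b(y)$.

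The key geometric step is to show that the reversed dynamics leave a neighbourhood of $\mathcal D_\mu$ invariant. On the segment $x_2=0$, $|x_1-\pi|\le r_\mu$, we have $\cos x_1\le-1/\mu$, so $b=0$, and $V_{A,2}=0$; hence no vertical motion is possible there. Horizontally, the drift $-V_{A,1}=A\sin x_1$ points toward $\pi$, and at the endpoints its size is $A\sqrt{1-\mu^{-2}}=\sqrt{\mu^2-1}/d$. The lower condition $\sqrt{1+4d^2}<\mu$ should make this exceed the control bound $a\le1+\mu$, so the endpoints are inflowing. To make the argument robust I would thicken the segment to a thin rectangle $[\pi-r_\mu,\pi+r_\mu]\times[-\delta,\delta]$. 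On its top and bottom edges, the vertical drift $-A\cos x_1\sin x_2$ pushes toward $x_2=0$ with strength $\gtrsim A\delta/\mu$, while $b=(1+\mu\cos x_1\cos x_2)_+$ is only $O(\delta^2)$. The upper bounds $\mu\le 1+d/10$ and $d<20/399$ are exactly what I expect is needed to fit $\delta$ and the corner estimates together. Every admissible trajectory from $\mathcal D_\mu$ then has $y_2(t)$ trapped near $0$ and $y_1(t)\in[\pi-r_\mu,\pi+r_\mu]$. Therefore $p\cdot y(t)\ge \pi p_1-r_\mu|p_1|$, up to a $p_2$-term that vanishes if $\delta$ can be sent to $0$, or if one uses that $y_2\equiv0$ when starting on the axis. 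This gives \eqref{eq:slow-main}.

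\emph{Lower comparator and fast point.} Since $\nu^{\mathsf T}S\nu\ge -A|\Phi|$, I would take $\Hunc$ to be, or to dominate from above, $V_A\cdot q+(1-\mu|\Phi|)_+|q|$. This is convex in $q$ and has control set the ball of radius $\theta(x)=(1-\mu|\Phi(x)|)_+$, and $\theta=1$ on the separatrix grid $\{\cos x_1\cos x_2=0\}$, which contains $x_{\mathrm f}$. Then $G_{\mathrm{unc}}^p(x_{\mathrm f},t)=\inf p\cdot y(t)$ over trajectories with unit-size steering available near the cell boundaries. The fast estimate reduces to a Xin--Yu-type construction: a controlled trajectory that rides the separatrix network in the direction $-p$, spending time $O(\log A/A)$ per cell to cross the hyperbolic corners with $O(1)$ control. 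This gives displacement $\ge cA/\log A$ per unit time in direction $-p$, uniformly in the unit vector $p$. One needs to check that $\theta$ stays bounded below in an $O(1/A)$-neighbourhood of the separatrices, which holds because $|\Phi|\lesssim$ distance to the grid. I expect the main obstacle to be the invariance step for $\Hhat$, specifically the corner and thickening estimates that use the precise parameter window, rather than this fast-transport step, which is essentially classical.
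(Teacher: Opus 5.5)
Your architecture matches the paper's: the sandwich $\Hunc\le\Hcut\le\Hhat$, order reversal, the control representation of $\Hhat$, the channel $\mathcal D_\mu$, and transfer of the fast bound through $G_+^p\le G_{\mathrm{unc}}^p$. However, both analytic steps you actually sketch contain errors.

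\textbf{The slow channel.} The thickened rectangle is not invariant. For $|y_1-\pi|\le r_\mu$ you have $\cos y_1\le-1/\mu$, so the vertical reversed drift $-A\cos y_1\sin y_2$ has the \emph{same} sign as $y_2$. The point $(\pi,0)$ is a saddle of the reversed flow: it attracts along the axis and repels vertically at a rate of order $A$. Your own estimate $b=O(\delta^2)$ shows the control cannot resist this. Hence $[\pi-r_\mu,\pi+r_\mu]\times[-\delta,\delta]$ ejects trajectories, and no choice of $\delta$ or corner estimate repairs it.

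The channel is invariant only through exact degeneracy. On the axis $V_{A,2}=0$ and $a_2=(1-\mu\cos z)_+=0$, so the vertical velocity vanishes for every control. Uniqueness of Carath\'eodory solutions then forces $y_2\equiv0$ until any horizontal exit. This is the remark you add at the end, and it is the entire mechanism, not a fallback; the ``main obstacle'' you anticipate does not exist.

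Horizontal exit is excluded by comparing with the endpoint value $a_1(\pi\pm r_\mu,0)=1+\mu\cos r_\mu=2$, not with $1+\mu$. Indeed $A\sin r_\mu>2$ is exactly $\mu^2>1+4d^2$. By contrast, $A\sin r_\mu>1+\mu$ would require $\mu>(1+d^2)/(1-d^2)$, which the window does not give. The trap uses only the lower bound in \eqref{eq:parameter-window}. The bound $\mu\le1+d/10$ is the hypothesis of the Xin--Yu theorem, and $d<20/399$ only makes the window nonempty.

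\textbf{The fast point.} Your construction rests on a misidentified geometry. The stream function of $V_0$ is $\sin x_1\sin x_2$, so the separatrix grid is $\{\sin x_1\sin x_2=0\}$, not $\{\cos x_1\cos x_2=0\}$. At the hyperbolic corners $(k\pi,l\pi)$ one has $|\Phi|=1$. Hence your $\theta=(1-\mu|\Phi|)_+$ vanishes on arcs of length $2r_\mu$ of the separatrices around every corner. In particular it vanishes on $\mathcal D_\mu$, where you yourself used $(1+\mu\Phi)_+=0$. So corners cannot be crossed ``with $O(1)$ control,'' and $\theta$ is not bounded below near the separatrices.

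The minorant $V_A\cdot q+\theta|q|\le\Hcut$ is legitimate, though it is not below $\Hunc$. The difficulty is the trajectory construction. From $x_{\mathrm f}$ the reversed drift $A\sin y_1$ sweeps the trajectory into the quenched corner $(\pi,0)$, and staying on the axis means capture in $\mathcal D_\mu$. A fast trajectory must be steered off the axis beforehand and then turned by the bare flow. Establishing the $A/\log A$ rate in this quenched-corner regime is the nontrivial content of \cite[Theorem~1.3(ii)]{XinYu2014}, not a classical computation.

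The paper does not reprove it. It quotes that theorem for $G_{\mathrm{unc}}^p$ after checking $d<1/4$, $A=\mu/d>4$, and $\mu\le1+d/10$. It then transfers the bound by $G_+^p\le G_{\mathrm{unc}}^p$, exactly as your first paragraph sets up. Replace your fast-point construction by that citation, or else supply a genuine corner-passage argument with zero control near the saddles.
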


The interval in \eqref{eq:parameter-window} is nonempty precisely under
the stated restriction $d<20/399$, because
\[
  \sqrt{1+4d^2}<1+\frac d{10}
  \quad\Longleftrightarrow\quad
  d<\frac{20}{399}.
\]
It lies above the threshold $Ad=1$ at which the uncut strain law first
permits negative local burning velocities. Two corollaries are then of particular interest.

\begin{corollary}[Periodic Perturbations of Planar Data]
\label{cor:periodic-perturbations}
Under the hypotheses of \Cref{thm:main}, let
$u_0\in C(\T^2_{2\pi})$, and let $G_{+,u_0}^p$ solve the physical
cutoff equation with initial datum
\[
  G_{+,u_0}^p(x,0)=p\cdot x+u_0(x).
\]
If $m=\min u_0$ and $M=\max u_0$, then
\begin{equation}\label{eq:periodic-data-sandwich}
  G_+^p(x,t)+m
  \le G_{+,u_0}^p(x,t)
  \le G_+^p(x,t)+M.
\end{equation}
Hence \eqref{eq:fast-main} and \eqref{eq:oscillation-main} remain valid
with $G_+^p$ replaced by $G_{+,u_0}^p$, while the right hand side of
\eqref{eq:slow-main} is replaced by
$\pi p_1-r_\mu|p_1|+m$.
\end{corollary}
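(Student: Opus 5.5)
The plan is to reduce everything to comparison for the Hamilton--Jacobi equation $G_t+H(x,DG)=0$, using that the Hamiltonian is independent of $G$. First, $G_+^p+c$ solves \eqref{eq:cutoff-main} with datum $p\cdot x+c$ for every constant $c$. Since $m\le u_0\le M$, the ordering
\[
  p\cdot x+m\le p\cdot x+u_0(x)\le p\cdot x+M
\]
holds at $t=0$. The comparison principle for viscosity solutions then gives \eqref{eq:periodic-data-sandwich}.

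The hypotheses needed for comparison are as follows. Write $H(x,q)=(1+d\,q^{\mathsf T}S_{V_A}(x)q/|q|^2)_+|q|+V_A(x)\cdot q$. This function is continuous on $\R^2\times\R^2$, with the continuous extension $0$ at $q=0$, because the cutoff factor is bounded by $1+A d$. It is positively $1$-homogeneous and globally Lipschitz in $q$, with constant at most $(1+Ad)+A$. In $x$ it satisfies
\[
  |H(x,q)-H(y,q)|\le L|x-y|\,|q|,
\]
since $S_{V_A}$ and $V_A$ are smooth and periodic and $a\mapsto(a)_+$ is $1$-Lipschitz.

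These are the standard structural conditions under which comparison holds for uniformly continuous (indeed Lipschitz) sub- and supersolutions with at most linear growth on $\R^2\times[0,T]$. The appendix on background and conventions presumably records exactly such a comparison principle. I would invoke it directly, applied to $G_+^p+m$ against $G_{+,u_0}^p$ and to $G_{+,u_0}^p$ against $G_+^p+M$. The only point needing care is that the solutions grow linearly in $x$ rather than being bounded. Both differences $G_{+,u_0}^p-G_+^p-m$ and $G_+^p+M-G_{+,u_0}^p$ are, however, periodic in $x$, by uniqueness and the $2\pi$-periodicity of $H$ in $x$. One can therefore run the doubling-of-variables argument on the torus, or simply cite the comparison on $\R^2$ with linear growth. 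I expect this to be the only nonroutine step, and it is mild.

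The consequences are then immediate.
\begin{itemize}
\item \emph{Slow bound.} The lower inequality in \eqref{eq:periodic-data-sandwich} together with \eqref{eq:slow-main} gives $G_{+,u_0}^p\ge\pi p_1-r_\mu|p_1|+m$ on $\mathcal D_\mu$.
\item \emph{Fast bound.} The upper inequality gives $-G_{+,u_0}^p(x_{\mathrm f},t)/t\ge -G_+^p(x_{\mathrm f},t)/t-M/t$. Since $M/t\to0$, \eqref{eq:fast-main} transfers unchanged.
\item \emph{Oscillation.} Set $u=G_{+,u_0}^p-p\cdot x$. Then $u$ and $u^p$ differ by at most $M-m$ pointwise, so
\[
  \osc u(\cdot,t)\ge\osc u^p(\cdot,t)-(M-m).
\]
Dividing by $t$ and applying \eqref{eq:oscillation-main} gives the claim.
\end{itemize}
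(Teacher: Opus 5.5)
Your proposal is correct and is essentially the paper's proof: order preservation together with invariance of the solution semigroup under addition of constants gives the sandwich, after which the slow and fast bounds transfer exactly as you describe. The only difference is minor. For the oscillation you compare global oscillations, whereas the paper compares the values at $x_{\mathrm s}$ and $x_{\mathrm f}$. Phrase your justification as $u-u^p$ taking values in $[m,M]$, since the literal claim $|u-u^p|\le M-m$ fails, e.g., for a nonzero constant $u_0$.
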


\begin{corollary}[Order One Separation at Vanishing Spatial Scale]
\label{cor:homogenization-fails}
Under the hypotheses of \Cref{thm:main}, fix a unit vector
$p\in\mathbb R^2$.  For $\varepsilon>0$, set
\[
  V_A^\varepsilon(x)\coloneqq V_A(x/\varepsilon),
  \qquad
  d_\varepsilon\coloneqq \varepsilon d,
\]
and let $G^{\varepsilon,p}$ solve
\begin{equation}\label{eq:scaled-physical}
\begin{cases}
(G^{\varepsilon,p})_t+
\left(
1+d_\varepsilon
\dfrac{
D(G^{\varepsilon,p})^{\mathsf T}
S_{V_A^\varepsilon}(x)
D G^{\varepsilon,p}
}{
|D G^{\varepsilon,p}|^2
}
\right)_+
|D G^{\varepsilon,p}|\\
\hfill{}+
V_A^\varepsilon(x)\cdot D G^{\varepsilon,p}=0,\\[0.5em]
G^{\varepsilon,p}(x,0)=p\cdot x.
\end{cases}
\end{equation}
Then, for every $t_0>0$,
\begin{equation}\label{eq:scaled-gap-main}
  \liminf_{\varepsilon\downarrow0}
  \left[
  G^{\varepsilon,p}(\varepsilon x_{\mathrm s},t_0)
  -G^{\varepsilon,p}(\varepsilon x_{\mathrm f},t_0)
  \right]
  \ge C\frac A{\log A}\,t_0,
\end{equation}
while
\[
  |\varepsilon x_{\mathrm s}-\varepsilon x_{\mathrm f}|
  =\frac\pi2\varepsilon.
\]
Consequently, no sequence $\varepsilon_j\downarrow0$ yields locally
uniform convergence of $G^{\varepsilon_j,p}$ on
$\mathbb R^2\times[0,\infty)$.  In particular, the physical strain
equation does not homogenize to a deterministic first order
Hamilton-Jacobi equation for any unit planar slope $p$.
\end{corollary}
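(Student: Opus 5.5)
The natural route is to reduce \Cref{cor:homogenization-fails} to \Cref{thm:main} by the hyperbolic scaling of the equation. Let $G_+^p$ be the cell-scale solution of \eqref{eq:cutoff-main}, and set
\[
  G^{\varepsilon,p}(x,t)\coloneqq \varepsilon\, G_+^p\!\left(\frac x\varepsilon,\frac t\varepsilon\right).
\]
First I would verify that this function solves \eqref{eq:scaled-physical} in the viscosity sense. Gradients are invariant under this scaling: $DG^{\varepsilon,p}(x,t)=DG_+^p(x/\varepsilon,t/\varepsilon)$, and likewise $G^{\varepsilon,p}_t=(G_+^p)_t$ at the rescaled point. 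The strain tensor scales as $S_{V_A^\varepsilon}(x)=\varepsilon^{-1}S_{V_A}(x/\varepsilon)$, so $d_\varepsilon S_{V_A^\varepsilon}(x)=d\,S_{V_A}(x/\varepsilon)$. The transport term is $V_A^\varepsilon(x)=V_A(x/\varepsilon)$. Hence the scaled Hamiltonian at $(x,q)$ equals $H_+(x/\varepsilon,q)$, and the equation is preserved. The initial datum is also preserved, since $\varepsilon\,p\cdot(x/\varepsilon)=p\cdot x$.

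The viscosity-sense statement follows by rescaling test functions: if $\varphi$ touches $G^{\varepsilon,p}$ at $(x_0,t_0)$, then $\psi(y,s)=\varepsilon^{-1}\varphi(\varepsilon y,\varepsilon s)$ touches $G_+^p$ at the rescaled point. The derivatives of $\psi$ there coincide with those of $\varphi$, so the sub- and supersolution inequalities transfer. Uniqueness for \eqref{eq:scaled-physical} comes from the same comparison principle used for \eqref{eq:cutoff-main}; I will take it as given from the well-posedness background in Appendix~\ref{app:background}. This identifies the solution of \eqref{eq:scaled-physical} with the rescaled function.

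Next, evaluate at the two points. With $\tau=t_0/\varepsilon$,
\[
  G^{\varepsilon,p}(\varepsilon x_{\mathrm s},t_0)-G^{\varepsilon,p}(\varepsilon x_{\mathrm f},t_0)
  =\varepsilon G_+^p(x_{\mathrm s},\tau)-\varepsilon G_+^p(x_{\mathrm f},\tau).
\]
Since $x_{\mathrm s}=(\pi,0)\in\mathcal D_\mu$, the bound \eqref{eq:slow-main} gives $\varepsilon G_+^p(x_{\mathrm s},\tau)\ge\varepsilon(\pi p_1-r_\mu|p_1|)$, which tends to $0$ as $\varepsilon\downarrow0$. For the other term, $-\varepsilon G_+^p(x_{\mathrm f},\tau)=t_0\cdot\bigl(-G_+^p(x_{\mathrm f},\tau)/\tau\bigr)$. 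Because $\tau\to\infty$ as $\varepsilon\downarrow0$, \eqref{eq:fast-main} makes its $\liminf$ at least $C\frac{A}{\log A}t_0$. Adding the two estimates yields \eqref{eq:scaled-gap-main}. The distance identity is immediate: $|x_{\mathrm s}-x_{\mathrm f}|=\pi/2$.

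For the noncompactness claim, suppose $G^{\varepsilon_j,p}\to G$ locally uniformly. Then $G$ is continuous. Both $\varepsilon_jx_{\mathrm s}$ and $\varepsilon_jx_{\mathrm f}$ converge to $0$, so by local uniform convergence the difference $G^{\varepsilon_j,p}(\varepsilon_jx_{\mathrm s},t_0)-G^{\varepsilon_j,p}(\varepsilon_jx_{\mathrm f},t_0)$ tends to $G(0,t_0)-G(0,t_0)=0$. This contradicts the positive lower bound from \eqref{eq:scaled-gap-main}. In particular no deterministic first-order homogenized limit can exist, since any such limit would come with locally uniform convergence along the full family.

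The only genuinely delicate step is justifying that the scaling identity holds at the viscosity level and that \eqref{eq:scaled-physical} has a unique solution. The Hamiltonian is noncoercive, and at $q=0$ it is defined only through continuous extension, so uniqueness must rest on the comparison principle for Lipschitz-in-$x$ Hamiltonians with linear growth in $q$. I will invoke that comparison principle as the same result used for the unscaled problem rather than reprove it.
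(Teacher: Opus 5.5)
Your proposal is correct and follows the paper's proof essentially step for step. You use the exact scaling identity $G^{\varepsilon,p}(x,t)=\varepsilon G_+^p(x/\varepsilon,t/\varepsilon)$, then the channel bound at $x_{\mathrm s}$ and the fast bound at $x_{\mathrm f}$ with $t_0/\varepsilon\to\infty$, and finally the continuity argument at the common limit point $0$. Your rescaling of test functions and your remark that uniqueness holds in the $2\pi\varepsilon$-periodic planar class simply spell out what the paper compresses into ``a direct change of variables and uniqueness.''
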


Our second main result is independent of the cellular construction.  It
shows that incompressibility and the physical cutoff impose a strong
global rigidity on approximate outward Hamiltonian certificates.

\begin{definition}[Outward Barrier Certificate]
\label{def:barrier certificate}
Let $H:\T_L^n\times\mathbb R^n\to\mathbb R$ be positively homogeneous
in its second argument.  A nonempty closed set $D\subset\T_L^n$ is an
$\varepsilon$-outward barrier certificate for $H$ if
\[
  H(x,\nu)\le\varepsilon
\]
for every $x\in D$ and every unit proximal outward normal $\nu$ of $D$
at $x$.
\end{definition}

For a smooth front represented as $\{G=0\}$ and oriented by
$\nu=DG/|DG|$, the equation and positive homogeneity assign the normal
velocity $H(x,\nu)$.  Hence, if $D$ is the closure of a $C^1$ domain
represented locally as $\{G\le0\}$, then $\nu$ is the outward unit
normal and $H(x,\nu)\le0$ is the pointwise first order condition that
the prescribed outward normal velocity be nonpositive along
$\partial D$.  The definition records only this normal Hamiltonian
inequality; it neither asserts invariance of $D$ under the fluid flow,
nor characterizes every possible global blocking mechanism.

\begin{theorem}[Rigidity of Outward Barrier Certificates]
\label{thm:no barrier}
Let $n\ge2$ and $L>0$, and write
$\T_L^n\coloneqq\mathbb R^n/L\mathbb Z^n$.  Let
$V\in C^2(\T_L^n;\mathbb R^n)$ be divergence free, let $d>0$, and
define
\[
  H(x,q)
  \coloneqq
  \left(
    1+d\frac{q^{\mathsf T}S_V(x)q}{|q|^2}
  \right)_+|q|
  +V(x)\cdot q,
  \qquad H(x,0)\coloneqq0.
\]
We denote by $\dist$ the geodesic distance on the flat torus
$\T_L^n$. For a nonempty closed set $D\subset\T_L^n$, set
\[
  \dist(y,D)\coloneqq\inf_{x\in D}\dist(y,x),
  \qquad
  R(D)\coloneqq\max_{y\in\T_L^n}\dist(y,D).
\]
There exists $\varepsilon_0>0$, depending only on
$d$, $L$, and $\|V\|_{C^2}$, such that every
$\varepsilon$-outward barrier certificate with
$0\le\varepsilon\le\varepsilon_0$ satisfies
\[
  R(D)\le2d\varepsilon.
\]
In particular, every exact outward barrier certificate is the whole
torus.
\end{theorem}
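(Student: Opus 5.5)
The plan is to pass from the pointwise normal inequality on $D$ to a statement about the distance function $\delta(y)\coloneqq\dist(y,D)$, and then contradict incompressibility through a zero-flux identity. Fix an $\varepsilon$-outward barrier certificate $D$ and suppose, for contradiction, that $R(D)>2d\varepsilon$. The function $\delta$ is $1$-Lipschitz and periodic. Let $y$ be a point with $0<\delta(y)<L/2$ at which $\delta$ is differentiable. Then $y$ has a unique nearest point $x\in D$, and
\[
  \nabla\delta(y)=\nu\coloneqq\frac{y-x}{|y-x|},
\]
where $\nu$ is a unit proximal outward normal of $D$ at $x$. By \Cref{def:barrier certificate} this gives $H(x,\nu)\le\varepsilon$, that is,
\[
  V(x)\cdot\nu\le\varepsilon-\bigl(1+d\kappa\bigr)_+,
  \qquad \kappa\coloneqq\nu^{\mathsf T}S_V(x)\nu .
\]

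\textbf{Transport to $y$.} Write $s=\delta(y)$, so that $y=x+s\nu$. Taylor's theorem along the segment gives
\[
  V(y)\cdot\nu=V(x)\cdot\nu+s\,\nu^{\mathsf T}DV(x)\nu+O\bigl(\|V\|_{C^2}s^2\bigr).
\]
Since the antisymmetric part of $DV$ does not contribute to the quadratic form, $\nu^{\mathsf T}DV(x)\nu=\kappa$. Hence
\[
  V(y)\cdot\nabla\delta(y)\le\varepsilon-(1+d\kappa)_++s\kappa+Cs^2 .
\]
I bound the right-hand side by cases, for $0<s\le d$:
\begin{itemize}
\item If $1+d\kappa\ge0$, then $\kappa\ge-1/d$, so $(s-d)\kappa\le(d-s)/d$. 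The right-hand side is $\varepsilon-1+(s-d)\kappa+Cs^2\le\varepsilon-s/d+Cs^2$.
\item If $1+d\kappa<0$, then $\kappa<-1/d$, and the right-hand side is at most $\varepsilon+s\kappa+Cs^2\le\varepsilon-s/d+Cs^2$.
\end{itemize}
In both cases $V(y)\cdot\nabla\delta(y)\le\varepsilon-s/d+Cs^2$. Choose $\varepsilon_0$ so small, depending only on $d$, $L$ and $\|V\|_{C^2}$, that $2d\varepsilon_0<\min\{d,L/2\}$ and $C(2d\varepsilon)^2<\varepsilon/2$ for $\varepsilon\le\varepsilon_0$. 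Then on the shell $\{\tfrac32 d\varepsilon<\delta<2d\varepsilon\}$ we get $V\cdot\nabla\delta\le-\varepsilon/2+\varepsilon/2\cdot 0<0$ a.e.; more precisely it is $\le\varepsilon-\tfrac32\varepsilon+\varepsilon/2\cdot(\text{small})<0$. (The case $\varepsilon=0$ is handled by the same shell argument for any small positive radii, which gives $D=\T_L^n$.)

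\textbf{Flux identity.} Set $s_1=\tfrac32 d\varepsilon$, $s_2=2d\varepsilon$, and $\Phi(r)=\min\{\max\{r-s_1,0\},s_2-s_1\}$. The function $\Phi\circ\delta$ is Lipschitz and periodic, and $V$ is divergence free, so
\[
  0=\int_{\T_L^n}V\cdot\nabla(\Phi\circ\delta)=\int_{\{s_1<\delta<s_2\}}V\cdot\nabla\delta .
\]
Because $\T_L^n$ is connected, $\delta$ is continuous, $\delta=0$ on $D$, and $\max\delta=R(D)>s_2$, the open shell $\{s_1<\delta<s_2\}$ is nonempty and hence has positive measure. The integrand is strictly negative a.e. there, which contradicts the identity. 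Therefore $R(D)\le2d\varepsilon$. For $\varepsilon=0$ the same argument with arbitrarily small shells gives $R(D)=0$, so $D=\T_L^n$.

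\textbf{Main obstacle.} The delicate step is the nonsmooth bookkeeping. One needs a.e. differentiability of $\delta$ with $\nabla\delta=\nu$ a proximal normal at the unique nearest point, which requires $s<L/2$ so that the geodesic is a straight segment in the torus. One also needs the Taylor remainder to be uniform in $x$ and $\nu$. The case analysis above, which exploits the cutoff $(\cdot)_+$ to obtain the uniform gain $-s/d$, is what makes the argument work.
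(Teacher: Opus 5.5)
Your argument is correct. Its first step coincides with the paper's: the nearest-point decomposition $y=x+s\nu$, the Taylor expansion, and the cutoff inequality $-(1+d\kappa)_++s\kappa\le -s/d$ for $0<s\le d$, where your two cases are exactly the paper's verification. The routes diverge in how incompressibility is used. The paper argues in Lagrangian form. It bounds the upper Dini derivative of $\rho(t)=\dist(\Phi_t(y),D)$ along the flow map $\Phi_t$, and runs a scalar comparison with a first-exit argument over the components of $\{\rho>0\}$ to obtain the tube inclusion $\Phi_t(D_r)\subset D_{r_t}$. It then contradicts Liouville volume preservation, using continuity from above of $|D_{r_t}|$ and strict growth of tube volumes. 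You argue in Eulerian form. Rademacher's theorem and the identification $\nabla\delta(y)=\nu$ at differentiability points let you test $\operatorname{div}V=0$ against your truncation $\Phi\circ\delta$. The zero-flux identity over a nonempty open shell on which $V\cdot\nabla\delta<0$ a.e.\ then gives the contradiction directly. Your version dispenses with the flow, the Dini-derivative lemma, the ODE comparison and the measure-limit step. The price is standard nonsmooth calculus: uniqueness of the nearest point where $\delta$ is differentiable, the chain rule for Lipschitz compositions, and integration by parts against Lipschitz functions. The paper's version yields extra dynamical information, namely exponential attraction of every fluid trajectory starting within $r_0$ of $D$ into the $2d\varepsilon$-tube; this is also the structure the Lean appendix encodes. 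Two small points. First, your line estimating $V\cdot\nabla\delta$ on the shell is garbled and should read $\varepsilon-s/d+Cs^2<\varepsilon-\tfrac32\varepsilon+\tfrac12\varepsilon=0$ for $\tfrac32 d\varepsilon<s<2d\varepsilon$. Second, the shell is already nonempty once $R(D)>\tfrac32 d\varepsilon$, so your argument actually gives $R(D)\le\tfrac32 d\varepsilon$. The paper's proof admits the same sharpening if $r_0$ is shrunk so that $C_2r_0\le 1/(3d)$.
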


The terminology concerning proximal normals is recalled in
Appendix~\ref{app:proximal-background}; the proof is given in
\Cref{sec:no barrier}.

\begin{corollary}[Asymptotic Density of Approximate Certificates]
\label{cor:barriers-dense}
Under the assumptions and notation of \Cref{thm:no barrier}, let
$\varepsilon_j\downarrow0$ and let $D_j\subset\T_L^n$ be
$\varepsilon_j$-outward barrier certificates.  Then,
\[
  R(D_j)\longrightarrow0.
\]
Thus, no sequence of increasingly accurate outward certificates can
retain a complement of uniformly positive macroscopic size.
\end{corollary}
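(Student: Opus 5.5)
The plan is to derive \Cref{cor:barriers-dense} directly from \Cref{thm:no barrier}, since the constant $\varepsilon_0$ there depends only on $d$, $L$ and $\|V\|_{C^2}$, and not on the certificate.

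First, fix the data $V$, $d$, $L$ and let $\varepsilon_0>0$ be the threshold given by \Cref{thm:no barrier}. Because $\varepsilon_j\downarrow0$, there is an index $j_0$ with $0\le\varepsilon_j\le\varepsilon_0$ for all $j\ge j_0$. For each such $j$, the set $D_j$ is by hypothesis a nonempty closed $\varepsilon_j$-outward barrier certificate. The theorem therefore applies to $D_j$ with $\varepsilon=\varepsilon_j$ and gives
\[
  0\le R(D_j)\le 2d\,\varepsilon_j,\qquad j\ge j_0 .
\]
Letting $j\to\infty$ yields $R(D_j)\to0$. The lower bound $R(D_j)\ge0$ is immediate from the definition of $R$ as a maximum of distances. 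That this maximum is attained, and is therefore finite, follows from the compactness of $\T_L^n$ and the $1$-Lipschitz continuity of $y\mapsto\dist(y,D_j)$.

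For the interpretive sentence, I will note that the complement of $D_j$ contains a geodesic ball of radius $\rho$ only if $R(D_j)\ge\rho$. Hence no such ball of uniformly positive radius can survive along the sequence. There is no real obstacle here. The only point needing care is that $\varepsilon_0$ is uniform over all certificates, which is exactly how \Cref{thm:no barrier} is stated, so the finitely many indices with $\varepsilon_j>\varepsilon_0$ can simply be discarded.
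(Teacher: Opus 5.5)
Your proposal is correct and matches the paper's proof, which likewise applies \Cref{thm:no barrier} for all sufficiently large $j$, using that $\varepsilon_0$ depends only on $d$, $L$, and $\|V\|_{C^2}$, to get $R(D_j)\le 2d\varepsilon_j\to0$. Your remark that an open ball of radius $\rho$ in the complement forces $R(D_j)\ge\rho$ is a correct reading of the interpretive sentence.
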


Let us summarize the two main results intuitively. We start from an
initially flat flame front, which the surrounding flow wrinkles as it
evolves. The question is whether, in the long run, those wrinkles become
negligible at the macroscopic scale, so that the whole flame effectively
advances at one constant rate. Our counterexample says no: the flame can
exhibit different long run propagation rates at different spatial
locations within the periodic flow, so there is no single constant
effective speed describing the front.

The slow phase is not caused by a localized region that blocks the
flame from spreading outward.  Our \Cref{thm:no barrier} shows that any region
that blocks outward propagation in the precise sense considered here
would have to occupy the whole periodic domain.  The slow phase instead
comes from the way the flame and the surrounding flow interact.

\subsection{The Proof of \Cref{thm:main} in Four Moves}\label{proof-overview}
The argument is short once the correct comparisons are placed in the correct order. Fix a unit vector $p\in\mathbb R^2$ throughout this overview.

\textit{1. Transfer the Fast Propagation Estimate.}
We first compare the physical strain equation \eqref{eq:cutoff-main} with the equation studied
by Xin and Yu \cite{XinYu2014}, in which the positive part operation is
removed. Recall that
  $(a)_+\coloneqq \max\{a,0\}$,
so that $(a)_+\ge a$ for every real number $a$. Consequently, if $H$ denotes the
Hamiltonian of the physical strain equation \eqref{eq:cutoff-main} and $H_{\mathrm{unc}}$ the
corresponding Hamiltonian without the positive part, then
\[
  H_{\mathrm{unc}}(x,q)\le H(x,q),
  \qquad\text{for every }x,q.
\]
For Hamilton-Jacobi equations with the same initial data, this ordering
of the Hamiltonians reverses at the level of the solutions. Indeed, an
equation of the form
  $u_t+H(x,Du)=0$
can be read formally as $u_t=-H(x,Du)$; a larger Hamiltonian therefore
pushes the solution downward faster. More precisely, if
$H_1(x,q)\le H_2(x,q)$ and $u_2$ solves the equation with Hamiltonian
$H_2$, then
\[
  (u_2)_t+H_1(x,Du_2)
  \le
  (u_2)_t+H_2(x,Du_2)
  =0.
\]
Thus, $u_2$ is a subsolution of the equation with Hamiltonian $H_1$.
The comparison principle then gives
  $u_2(x,t)\le u_1(x,t)$,
where $u_1$ is the solution with Hamiltonian $H_1$ and the same initial
data. Applying this with $H_1=H_{\mathrm{unc}}$ and $H_2=H$, we obtain
\[
  G_+^p(x,t)\le G_{\mathrm{unc}}^p(x,t).
\]
Xin and Yu proved that, in the parameter range
\eqref{eq:parameter-window}, there is a particular spatial location
$x_{\mathrm f}$ for which
\[
  \liminf_{t\to\infty}
  -\frac{G_{\mathrm{unc}}^p(x_{\mathrm f},t)}{t}
  \ge C\frac{A}{\log A}>0.
\]
Since $G_+^p\le G_{\mathrm{unc}}^p$, we also have
\[
  -\frac{G_+^p(x_{\mathrm f},t)}{t}
  \ge
  -\frac{G_{\mathrm{unc}}^p(x_{\mathrm f},t)}{t}.
\]
Therefore the same positive lower bound holds for the physical equation \eqref{eq:cutoff-main}:
the positive part cutoff does not destroy the fast propagation behavior
at $x_{\mathrm f}$.

\textit{2. Construct a Convex Upper Bound for the Physical Hamiltonian.}
The first step gives us a location where the flame propagates rapidly.
To prove that there is no single constant long run propagation speed
describing the whole flame, we need a different behavior at another
microscopic phase.  We use a horizontal channel through
$x_{\mathrm s}=(\pi,0)$, where the gas velocity vanishes,
$V_A(x_{\mathrm s})=0$; such a point is called a \emph{stagnation
point}.  We prove that the physical solution $G_+^p$ remains bounded
below on the entire channel for all time. To obtain this lower bound, we replace the part of the Hamiltonian
describing the flame's burning part by a simpler expression that
is always at least as large.
For the cellular flow \eqref{cellular_flow}, equation  \eqref{eq:strain-tensor-intro} gives
\[
  S_{V_A}(x)
  =
  A
  \begin{pmatrix}
    -\Phi(x)&0\\
    0&\Phi(x)
  \end{pmatrix},
\]
so the burning part of the physical Hamiltonian can be written as
\[
  B_+(x,q)
  =
  \frac{
    \bigl[
      (1-Ad\Phi(x))q_1^2
      +(1+Ad\Phi(x))q_2^2
    \bigr]_+
  }{|q|}.
\]
Define
\[
  a_1(x)\coloneqq (1-Ad\Phi(x))_+,
  \qquad
  a_2(x)\coloneqq (1+Ad\Phi(x))_+.
\]
Using
\[
  [r+s]_+\le r_++s_+
  \qquad\text{and}\qquad
  \frac{q_i^2}{|q|}\le |q_i|,
\]
we obtain the pointwise upper bound
\begin{equation}\label{upper-bd}
    B_+(x,q)
  \le
  a_1(x)|q_1|+a_2(x)|q_2|.
\end{equation}
Since the transport term $A V_0(x)\cdot q$ is unchanged, the upper bound \eqref{upper-bd} on $B_+$ gives a pointwise upper bound on the full physical
Hamiltonian,
\[
  H(x,q)\le \widehat H(x,q)\coloneqq 
  A V_0(x)\cdot q
  +a_1(x)|q_1|
  +a_2(x)|q_2|.
\]

There is a useful geometric interpretation of this expression.  At each
point $x$, consider the rectangle
\[
  \mathcal K(x)
  \coloneqq 
  A V_0(x)
  +[-a_1(x),a_1(x)]e_1
  +[-a_2(x),a_2(x)]e_2
\]
The vectors in $\mathcal K(x)$ are the local velocities allowed by an
auxiliary comparison model that we construct for the proof. They consist
of the actual gas velocity $A V_0(x)$ together with independently
variable horizontal and vertical corrections of sizes at most $a_1(x)$
and $a_2(x)$. Then,
\[
  \widehat H(x,q)
  =
  \sup_{v\in\mathcal K(x)} v\cdot q.
\]
The right hand side is called the \emph{support function} of
$\mathcal K(x)$: for a given $q$, it takes the largest value of the dot
product $v\cdot q$ among all $v\in\mathcal K(x)$.

Equivalently, following the credal set tradition initiated by Levi
\cite{Levi1980}, associate with $\mathcal K(x)$ the credal set
\[
  \mathfrak P_x
  \coloneqq 
  \overline{\operatorname{co}}
  \{\delta_v:v\in\mathcal K(x)\}.
\]
Here a credal set is understood as a weakly closed convex set of
probability laws
\cite{AugustinEtAl2014,TroffaesDeCooman2014}.
Since $\mathcal K(x)$ is a closed and bounded rectangle in $\mathbb R^2$
(up to translation by the fixed vector $A V_0(x)$), it is compact. In turn, $\mathfrak P_x$ is precisely the
compact convex set of all probability measures supported on
$\mathcal K(x)$. Recall that, for a generic observable $Z$, its upper expectation (or upper prevision \cite{AugustinEtAl2014,Walley1991}) with respect to $\mathfrak P_x$ is $\overline{\mathbb E}_{\mathfrak P_x}[Z]=\sup_{P \in \mathfrak P_x} \int Z \mathrm d P$. Then,
\[
  \widehat H(x,q)
  =
  \sup_{P\in\mathfrak P_x}
  \int v\cdot q \mathrm d P(v) = \overline{\mathbb E}_{\mathfrak P_x}[v\cdot q],
\]
so $\widehat H(x,q)$ is the upper expectation of the observable
$v\mapsto v\cdot q$ over the credal set $\mathfrak P_x$. 
At each fixed $x$, this is also a state-dependent instance of Peng's
worst case mean uncertainty construction, see
\Cref{impr-prob-sec}. This probabilistic interpretation is not merely terminology. The set $\mathcal K(x)$ has a rectangular product structure: its
horizontal and vertical corrections can be chosen independently within
their respective intervals.  In the associated control problem, the admissible controls may also be
chosen anew over time and concatenated, so the resulting uncertainty
model is dynamically rectangular, in the sense familiar from robust
dynamic programming \cite{Iyengar2005}, and is compatible with the usual
dynamic programming principle
\cite[Chapter~III, Section~3.1]{BardiCapuzzoDolcetta1997}.  This is the
same time consistency mechanism that rectangular ambiguity provides in
imprecise probability and robust control.

The resulting Hamiltonian $\widehat H$ is convex in $q$ and admits a
standard deterministic control representation established in
Proposition \ref{prop:control-representation}.  Most importantly, this
auxiliary control problem contains a one dimensional region from which
none of its admissible trajectories can escape.  This trapped region,
constructed in the next step, will give the lower bound on $G_+^p$ that
we need at the stagnation point.
Let $\widehat G^p$ denote the solution of the Hamilton-Jacobi equation
with Hamiltonian $\widehat H$ and the same initial data as $G_+^p$.
Since
  $H(x,q)\le \widehat H(x,q)$,
the comparison principle gives
  $\widehat G^p(x,t)\le G_+^p(x,t)$.
Thus any lower bound proved for the simpler solution $\widehat G^p$ is
automatically a lower bound for the physical solution $G_+^p$. The operative PDE device is the support function majorant, while its
upper expectation representation identifies the same construction with
rectangular credal uncertainty. This exact correspondence allows the
counterexample to be interpreted simultaneously in Hamilton-Jacobi,
robust control, and imprecise probability terms.

\textit{3. Trap the Auxiliary Dynamics in the Maximal Horizontal Channel.}
We now exploit the deterministic control interpretation of
$\widehat H$.  Using
\[
  |q_i|=-\min_{|\eta_i|\le1}\eta_iq_i,
\]
the reversed controlled dynamics are
\[
  \dot Y(t)
  =-A V_0(Y(t))
  +a_1(Y(t))\eta_1(t)e_1
  +a_2(Y(t))\eta_2(t)e_2,
  \qquad |\eta_i(t)|\le1.
\]
Set
\[
  \mu=Ad,
  \qquad
  r_\mu=\arccos\frac1\mu,
  \qquad
  \mathcal D_\mu=[\pi-r_\mu,\pi+r_\mu]\times\{0\}.
\]
The lower bound in \eqref{eq:parameter-window} gives $\mu>1$, so
$r_\mu\in(0,\pi/2)$.  At $(\pi+z,0)$ with $|z|\le r_\mu$,
\[
  a_2(\pi+z,0)=(1-\mu\cos z)_+=0,
  \qquad
  V_{A,2}(\pi+z,0)=0.
\]
Thus both the transverse drift and the coefficient of the transverse
control vanish throughout the channel, so a trajectory starting there
cannot leave it vertically.

At the endpoints,
\[
  a_1(\pi\pm r_\mu,0)=2,
  \qquad
  V_{A,1}(\pi\pm r_\mu,0)=\pm A\sin r_\mu.
\]
All admissible endpoint velocities point into the channel precisely when
\[
  A\sin r_\mu>2.
\]
Since $A=\mu/d$ and
$\sin r_\mu=\sqrt{\mu^2-1}/\mu$, this is equivalent to
\[
  \mu^2>1+4d^2,
\]
which is exactly the lower condition in
\eqref{eq:parameter-window}.  Hence every admissible trajectory starting
at any $x\in\mathcal D_\mu$ remains in $\mathcal D_\mu$ for all time.
The control representation then gives
\[
  \widehat G^p(x,t)
  \ge\min_{y\in\mathcal D_\mu}p\cdot y
  =\pi p_1-r_\mu|p_1|,
  \qquad x\in\mathcal D_\mu,\quad t\ge0.
\]

\textit{4. Combine the Two Comparisons.}
For every unit slope $p$, the two comparisons give
\[
  \widehat G^p\le G_+^p\le G_{\mathrm{unc}}^p.
\]
The left inequality transfers the whole channel lower bound to the
physical solution, while the right inequality transfers the Xin-Yu
fast point estimate at $x_{\mathrm f}$.  These two fixed microscopic
phases force the periodic correction to have oscillation growing at
least at rate $CA/\log A$.  The same comparison is stable under bounded
periodic perturbations of the planar initial datum.

\textit{Relation to the uncut strain equation.}
The closest predecessor to our approach is the work of Xin and Yu
\cite{XinYu2014}, who studied the same cellular flow for the strain
equation without the positive part cutoff and found a regime in which
fast propagation and local stagnation coexist. Their result does not
imply ours.  Indeed, the inequality between the two Hamiltonians
transfers their fast point estimate to the physical model, but the
corresponding comparison of solutions goes in the wrong direction for
transferring the stagnant point estimate.  The new ingredient in our proof is precisely the comparison in the
opposite direction: a convex upper bound on the physical Hamiltonian
whose associated control problem traps trajectories in a bounded
horizontal segment.  This supplies the missing stagnation estimate and
shows that the nonhomogenizing regime survives the physical cutoff.

Writing $\Phi(x)=\cos x_1\cos x_2$, Xin, Yu, and Ronney also suggest
first considering the simplified Hamiltonian
\[
  \widetilde H_{\mathrm{XYR}}(x,q)
  =
  \bigl(|q|-\mu\Phi(x)(|q_1|-|q_2|)\bigr)_+
  +V_A(x)\cdot q
\]
and then invoking comparison
\cite[Section~5.2]{XinYuRonney2024}.  This Hamiltonian is not globally
ordered with $H_+$.  Indeed,
\[
  \frac{q_1^2-q_2^2}{|q|}
  =
  (|q_1|-|q_2|)
  \frac{|q_1|+|q_2|}{|q|},
  \qquad q\ne0.
\]
When $q_1q_2\ne0$ and $|q_1|\ne|q_2|$, the last factor is strictly
larger than one.  Since $\Phi$ takes both signs arbitrarily close to
zero, one can keep both positive parts active while reversing the order
according to the sign of $\Phi(x)(|q_1|-|q_2|)$.  Thus neither
$\widetilde H_{\mathrm{XYR}}\le H_+$ nor
$H_+\le\widetilde H_{\mathrm{XYR}}$ holds globally.  The role of
$\widehat H$ is precisely to provide a genuine global upper bound while
retaining a convex support function structure and an exactly vanishing
transverse coefficient on the comparison channel.

\subsection{The Proof of \Cref{thm:no barrier} in Three Moves}

The proof has a different structure from the cellular counterexample.
It combines a local consequence of the physical cutoff with the global
volume preservation imposed by incompressibility. Fix a nonempty closed
set $D\subset\T_L^n$ satisfying the assumptions of
\Cref{thm:no barrier}.

\textit{1. Convert the Barrier Condition into Motion Toward $D$.}
Take a point $y\notin D$ whose distance from $D$ is
  $r\coloneqq\dist(y,D)$.
Let $x\in D$ be a nearest point and write
\[
  y=x+r\nu,
  \qquad |\nu|=1.
\]
The vector $\nu$ is a proximal outward normal to $D$ at $x$. Set
\[
  s\coloneqq
  \nu^{\mathsf T}S_V(x)\nu
  =
  \nu^{\mathsf T}DV(x)\nu.
\]
The assumed barrier inequality gives
\[
  V(x)\cdot\nu
  \le
  \varepsilon-(1+ds)_+.
\]
A Taylor expansion from $x$ to $y$ then gives
\[
  V(y)\cdot\nu
  \le
  \varepsilon-(1+ds)_++rs+C_2r^2,
  \qquad
  C_2\coloneqq\frac12\|D^2V\|_\infty.
\]

The key elementary consequence of the positive part cutoff is
\[
  -(1+ds)_++rs\le-\frac rd,
  \qquad 0<r<d,
\]
for every real number $s$. Choose $r_0<\min\{d,L/2\}$ sufficiently
small that
\[
  C_2r_0\le\frac1{2d}.
\]
For every $0<r<r_0$, we therefore obtain
\[
  V(y)\cdot\nu
  \le
  \varepsilon-\frac r{2d}.
\]
The vector $\nu$ points away from $D$. Hence, whenever
$r>2d\varepsilon$, the component of the fluid velocity pointing away
from $D$ is negative. The forward fluid flow therefore moves $y$
closer to $D$.

\textit{2. Show that the Distance from $D$ Contracts.}
Let $\Phi_t$ denote the flow generated by $V$, and define $\rho(t)\coloneqq\dist(\Phi_t(y),D)$. 
Although the nearest point in $D$ may change with time, the upper
derivative of the distance is controlled by the velocity in a nearest
outward normal direction. The estimate from the first step therefore
gives
\[
  \Dplus\rho(t)
  \le
  \varepsilon-\frac{\rho(t)}{2d}
\]
whenever $0<\rho(t)<r_0$. Scalar comparison yields
\[
  \rho(t)
  \le
  2d\varepsilon+
  \bigl(\rho(0)-2d\varepsilon\bigr)_+
  e^{-t/(2d)}.
\]
Thus the distance from $D$ decreases toward the error scale
$2d\varepsilon$.

For $s\ge0$, write
  $D_s\coloneqq
  \{y\in\T_L^n:\dist(y,D)\le s\}$.
If
  $2d\varepsilon<r<r_0$,
the distance estimate implies
\[
  \Phi_t(D_r)\subset D_{r_t},
  \qquad
  r_t
  =
  2d\varepsilon+
  (r-2d\varepsilon)e^{-t/(2d)}.
\]
Since $r_t$ decreases to $2d\varepsilon$, the flow carries the whole
$r$ neighborhood of $D$ into successively smaller neighborhoods
approaching $D_{2d\varepsilon}$.

\textit{3. Use Incompressibility to Rule Out a Larger Hole.}
Suppose, for contradiction, that
  $R(D)>2d\varepsilon$.
Choose
\[
  2d\varepsilon
  <
  r
  <
  \min\{R(D),r_0\}.
\]
Because $r<R(D)$, the region between the two neighborhoods
$D_{2d\varepsilon}$ and $D_r$ contains a nonempty open set. Consequently,
\[
  |D_{2d\varepsilon}|<|D_r|.
\]

On the other hand, incompressibility means that the flow $\Phi_t$
preserves Lebesgue measure. Together with the inclusion from the
second step, this gives
\[
  |D_r|
  =
  |\Phi_t(D_r)|
  \le
  |D_{r_t}|.
\]
Letting $t\to\infty$, so that $r_t\downarrow2d\varepsilon$, yields
\[
  |D_r|
  \le
  |D_{2d\varepsilon}|,
\]
which contradicts the strict inequality above. Therefore
  $R(D)\le2d\varepsilon$.
When $\varepsilon=0$, every point of the torus has distance zero from
$D$, so $D=\T_L^n$. Applying the same estimate to
$\varepsilon_j\downarrow0$ gives
\Cref{cor:barriers-dense}.

\subsection{Relevance to Machine Learning and Statistics}

Because our work intersects with machine learning and statistics, we
briefly discuss its relevance to sequential decision making under model
uncertainty. Robust Markov decision processes optimize against
uncertainty in transition dynamics, often represented through ambiguity
sets \cite{Iyengar2005,NilimElGhaoui2005,WiesemannKuhnRustem2013}.
Rectangular ambiguity is especially important because it makes Bellman
recursion time consistent. It is tempting, however, to infer from time
consistency an unjustified form of long run state independence.

Our results isolate the distinction. The local credal sets are compact
and convex, while the associated uncertainty model is dynamically
rectangular and governed by a valid dynamic programming principle.
Nevertheless, the resulting long horizon value remembers where it
started: one state lies in a robust trap, while another exhibits
propagation at a positive linear rate. In average reward language \cite[Chapters~8-9]{Puterman1994},
rectangular robustification can therefore yield a valid Bellman recursion
without forcing the control problem to forget its initial state or settle
on a single universal average rate.

There is a parallel statistical interpretation. Ambiguity and credal
sets are also used to represent classes of plausible distributions or
Bayesian inputs, from robust Bayesian analysis to distributionally robust
inference
\cite{BergerRiosInsuaRuggeri2000,DuchiGlynnNamkoong2021}.
Constructing or statistically calibrating such a set addresses which
laws are regarded as plausible; it does not by itself ensure that
repeated propagation through the resulting model forgets its initial
condition. That is a separate stability question. Correspondingly,
asymptotic results for robust procedures under dependent data invoke
mixing or ergodicity conditions, while ergodicity of imprecise Markov
chains governs convergence toward initial state independent long run
behavior
\cite{DeCoomanHermansQuaeghebeur2009,DuchiGlynnNamkoong2021,HermansDeCooman2012}.

The common lesson is structural. For machine learning, a time-consistent
robust Bellman operator does not by itself justify a state-independent
average reward under distribution shift. For statistics, a class of
plausible laws does not by itself imply asymptotic forgetting in a
sequential robust procedure. In both cases, one needs an additional
global stability or ergodicity mechanism. Our counterexample provides a
smooth continuous state system in which the local uncertainty model and
dynamic programming are well behaved, yet a state-independent long run
rate still fails.

The barrier certificate theorem adds a complementary lesson for robust
learning and statistical certification.  In robust sequential decision
making, one may seek a closed set $D$ carrying the local outward-normal
certificate
$H(x,\nu)\le\varepsilon$.  \Cref{thm:no barrier} shows that a
certificate of this precise type cannot remain genuinely localized:
$R(D)\le2d\varepsilon$, and an exact certificate forces
$D=\T_L^n$.  Thus the system can remember where it started even though
its state dependence is not explained by a proper exact outward barrier
certificate.  The theorem does not rule out every other global
invariance or blocking mechanism.

There is also a conditional statistical reading. Suppose that a
data-driven estimate $H_N^{\mathrm{est}}$ satisfies
\[
  \left|H(x,\nu)-H_N^{\mathrm{est}}(x,\nu)\right|
  \le \delta_N
\]
uniformly over all pairs $(x,\nu)$ with $x\in D$ and $\nu$ a unit
proximal outward normal of $D$ at $x$, and suppose that
  $H_N^{\mathrm{est}}(x,\nu)\le\tau_N$
on those pairs. If
$0\le\tau_N+\delta_N\le\varepsilon_0$, then
\Cref{thm:no barrier} yields
\[
  R(D)\le2d(\tau_N+\delta_N).
\]
Hence a uniform estimation or calibration error is converted directly
into a geometric coverage error. The theorem supplies this deterministic
transfer step; obtaining a probabilistic bound on $\delta_N$ requires
additional sampling assumptions.

\section{Related Literature}\label{sec:literature}
In this Section, we review the main strands of literature on which our
analysis builds. As our proof combines techniques from combustion,
Hamilton-Jacobi homogenization, robust control, and imprecise
probability, we discuss relevant results from these areas in
Sections~\ref{flame-sec}, \ref{H-J-sec}, \ref{rob-control-sec}, and
\ref{impr-prob-sec}, respectively.

\subsection{$G$-equations, Flame Stretch, and Effective Speeds}\label{flame-sec}
The $G$-equation was formally introduced by Williams
\cite{Williams1985}, with an earlier form already appearing in
Markstein's work \cite{Markstein1964}; see also Peters
\cite{Peters2000} for the combustion background. The level set methodology was placed in a systematic Hamilton-Jacobi
and numerical framework by Osher and Sethian \cite{OsherSethian1988}.  The recent survey of Xin, Yu, and Ronney \cite{XinYuRonney2024} gives a broad account of the physical models, the effective burning velocity problem, and the interaction between PDE and Lagrangian methods.

For the inviscid equation with constant laminar speed, periodic homogenization for incompressible or nearly incompressible flows was established by Xin and Yu \cite{XinYu2010} and by Cardaliaguet, Nolen, and Souganidis \cite{CardaliaguetNolenSouganidis2011}.  Quantitative enhancement laws depend strongly on the flow geometry.  Cellular flows produce the characteristic $A/\log A$ growth scale, while shear and three dimensional flows exhibit different mechanisms.

Flame stretch introduces curvature and strain corrections.  Numerical work by Liu, Xin, and Yu \cite{LiuXinYu2013} documented enhancement, slowdown, local quenching, and global quenching in cellular flows.  The uncut strain equation was analyzed rigorously by Xin and Yu \cite{XinYu2014}.  They found three regimes as the flow intensity increases: homogenization below a threshold, nonhomogenization with coexisting fast and stagnant regions just above the threshold, and global trapping for sufficiently large intensity.  They explicitly identified the positive part physical correction as a
separate model for future study.  Although the Hamiltonian inequality
$\Hunc\leq\Hcut$ transfers their fast point estimate to the physical
model, the induced solution inequality $\Gcut\leq\Gunc$ cannot transfer
their stagnant point lower bound.  The present paper supplies the
missing comparison and shows that the positive part correction does not
remove the intermediate nonhomogenizing regime.

For the physical curvature equation, Gao, Long, Xin, and Yu
\cite{GaoLongXinYu2024} proved existence of an effective burning
velocity for the standard two dimensional cellular flow and discussed
extensions of their method to more general two dimensional
incompressible periodic flows.  Mitake, Mooney, Tran, Xin, and Yu \cite{MitakeMooneyTranXinYu2025} later proved a bifurcation between homogenization and nonhomogenization for higher dimensional shear flows.  Their result shows that a physical cutoff is not, by itself, a universal homogenizing mechanism for geometric front equations.  Our theorem establishes the analogous failure for the first order physical strain equation, in the two dimensional cellular setting.

\subsection{Noncoercive and Nonconvex Hamilton-Jacobi Homogenization}\label{H-J-sec}
Classical periodic homogenization is particularly well developed for
coercive Hamiltonians, which grow uniformly in $x$ as $|q|\to\infty$,
because coercivity provides the compactness needed in the homogenization
argument and supports the standard periodic cell problem approach.
Noncoercive equations require additional geometric, dynamical, or
controllability hypotheses. Barles \cite{Barles2007} obtained homogenization under structural
conditions that recover sufficient control of the solution oscillations
despite the lack of full coercivity. Bardi and Terrone \cite{BardiTerrone2013} studied noncoercive
Hamilton-Jacobi-Isaacs equations, proving homogenization under
partial coercivity and asymptotic controllability hypotheses, while also
describing classes for which pointwise homogenization fails.

State-dependent long time behavior is likewise possible in deterministic
periodic noncoercive models.  Cardaliaguet \cite{Cardaliaguet2010} studied a two dimensional
noncoercive, nonconvex Hamiltonian, showing that under suitable
nonresonance conditions the long time average is constant, whereas
when these conditions fail it can retain a dependence on position.
Cardaliaguet, Lions, and Souganidis \cite{CardaliaguetLionsSouganidis2009}
analyzed moving interface laws with either oscillatory first order
normal velocity or curvature-dependent velocity coupled with oscillatory
forcing, while Caffarelli and Monneau \cite{CaffarelliMonneau2014}
proved two dimensional homogenization and constructed
higher dimensional counterexamples for forced mean curvature motion in
periodic media. These precedents establish that
noncoercivity can preserve microscopic information.  They do not,
however, address the special coupling between incompressible transport,
the strain tensor, and the physical positive part cutoff that we study in this paper.

The strain Hamiltonian is rather singular even within this literature:
it is positively homogeneous, direction-dependent, and nonconvex, while
its burning term may vanish in some normal directions even when the
drift remains large.  The counterexample we find exploits exactly this directional degeneracy.  A recent complementary development is the differential game approach of
Davini, Saona, and Ziliotto \cite{DaviniSaonaZiliotto2026} to stochastic
homogenization for classes of nonconvex and noncoercive Hamiltonians
under a uniform orientation condition on the dynamics. Our example
illustrates why some global mechanism preventing persistent trapping
or phase dependence is needed to recover homogenization.

\subsection{Differential Games, Robust Control, and Hamiltonian Comparisons}
\label{rob-control-sec}
Nonconvex Hamiltonians arise naturally as values of zero-sum differential games \cite{EvansSouganidis1984,Isaacs1965}.  The dynamic programming and viscosity solution framework is developed in Bardi and Capuzzo-Dolcetta \cite{BardiCapuzzoDolcetta1997}.  For our counterexample, a full two player representation is unnecessary.  We instead place the cutoff Hamiltonian below the support function of a rectangular velocity set.  This produces an auxiliary one player control problem whose solution can
be compared with that of the physical equation.

The direction of comparison is crucial. If $H_1\le H_2$ and the
corresponding Hamilton-Jacobi equations have the same initial data,
then the comparison principle gives $u_2\le u_1$. Thus, replacing the
physical Hamiltonian by a pointwise upper bound produces a lower bound
on its solution.  This is exactly what is needed at the stagnant point.  In contrast, the uncut Hamiltonian lies below the physical one and therefore gives the upper solution bound needed at the fast point.  The counterexample depends on using both comparisons at once.

\subsection{Imprecise Probability, Credal Dynamics, and Robust Learning}\label{impr-prob-sec}
Imprecise probability represents model uncertainty through lower and upper expectations or through credal sets of probability laws \cite{AugustinEtAl2014,TroffaesDeCooman2014,Walley1991}.  Upper transition operators propagate upper expectations when the
transition law is not known precisely but belongs to a set of possible
laws, with lower expectations obtained by conjugacy; iterating these
operators gives the dynamics of imprecise Markov chains
\cite{DeCoomanHermansQuaeghebeur2009,HermansDeCooman2012,TJoensDeBock2021}.
Their continuous time counterparts can be formulated through imprecise
Markov semigroups, whose long run and ergodic behavior has also been
studied \cite{CaprioChen2026}.

The same upper expectation calculus appears in distributionally robust optimization and robust dynamic programming.  Iyengar \cite{Iyengar2005}, Nilim and El Ghaoui \cite{NilimElGhaoui2005}, and Wiesemann, Kuhn, and Rustem \cite{WiesemannKuhnRustem2013} established foundational robust MDP formulations under transition ambiguity.  Recent credal machine learning methods use sets of priors, likelihoods, predictive distributions, or deployment laws to separate epistemic (reducible) from aleatoric (irreducible) uncertainty and to protect decisions under shift \cite{CaprioEtAl2024,CaprioSultanaEliaCuzzolin2024,ChenBerrettDamoulasCaprio2026}.  H\"ullermeier and Waegeman \cite{HuellermeierWaegeman2021} survey the broader distinction between aleatoric and epistemic uncertainty in machine learning.

The connection between our construction and imprecise probability is
exact: the auxiliary Hamiltonian $\widehat H$ is the upper expectation
of the observable $v\mapsto v\cdot q$ over a local credal set of
probability laws supported on the velocity set $\mathcal K(x)$. Our counterexample then shows that even when
these local credal sets are convex and compact and the associated
uncertainty model is dynamically rectangular, the long run robust value
need not forget its initial state. Additional global structure is needed
to ensure that different initial states share the same asymptotic
behavior.

\textit{Relation to Peng's sublinear-expectation framework.}
The $G$ in the combustion $G$-equation should not be identified with
the nonlinear generator $G$ in Peng's $G$-expectation. Nevertheless,
the auxiliary Hamiltonian $\widehat H$ has a precise first order
connection with Peng's framework. For each fixed $x$, let
$\xi_x(v)=v$ on $\mathcal K(x)$ and define, for
$\varphi\in C(\mathcal K(x))$,
\[
  \mathcal E_x[\varphi(\xi_x)]
  \coloneqq
  \sup_{v\in\mathcal K(x)}\varphi(v)
  =
  \sup_{P\in\mathfrak P_x}
  \int \varphi(v)\,\mathrm dP(v).
\]
Then,
\[
  \widehat H(x,q)
  =
  \mathcal E_x[\xi_x\cdot q].
\]
This is a state-dependent version of Peng's worst case distribution for
mean uncertainty: the first order part of Peng's nonlinear generator \cite{Peng2008}
has the support function form
\[
  G_{\mathrm{Peng}}(q,0)
  =
  \sup_{\theta\in\Theta}\theta\cdot q.
\]
The full $G_{\mathrm{Peng}}$ generator also contains a second order
variance or covariance uncertainty term, leading to the $G$-heat
equation, $G$-Brownian motion, and the associated stochastic calculus
\cite{Peng2007}. Our auxiliary model instead contains state dependent
velocity uncertainty and a first order Hamilton-Jacobi equation.
Moreover, because the physical cutoff Hamiltonian $H(x,\cdot)$ is
generally nonconvex, the exact sublinear expectation identification
applies to the convex majorant $\widehat H$, not to $H$ itself. Thus the
connection with Peng's theory is exact at the level of local sublinear
expectations and first order mean uncertainty, but it is not a direct
application of $G$-Brownian motion or the $G$-It\^o calculus.

Viewed through this lens, the two main results separate asymptotic state
dependence from the outward certificate geometry studied here: the
former occurs in \Cref{thm:main}, whereas \Cref{thm:no barrier} shows
that every exact outward barrier certificate for the physical
Hamiltonian fills the torus and forces increasingly accurate
certificates to become dense.

\section{Setting and the Hamiltonian Sandwich}\label{sec:setting}
We now turn to the formal proof. The main ideas and the role of the two
comparison arguments were explained in detail in \Cref{proof-overview}; here we introduce the precise notation and record the ingredients
needed to implement that strategy. In particular, we define the physical
and uncut Hamiltonians and establish the first half of the solution
sandwich, while the second half will follow from the credal construction
in \Cref{sec:credal}.

\subsection{The Cellular Flow}
Throughout the main proof we work on the $2\pi$-periodic torus $\T^2_{2\pi}$.  Set
\begin{equation}\label{eq:V0}
  V_0(x)=\bigl(-\sin x_1\cos x_2, \cos x_1\sin x_2\bigr),
  \qquad
  V_A=A V_0.
\end{equation}
The field is divergence free because
\[
  \partial_{x_1}(-\sin x_1\cos x_2)
  +\partial_{x_2}(\cos x_1\sin x_2)=0.
\]
A direct computation gives \eqref{eq:strain-tensor-intro}.
Let $\mu\coloneqq Ad$ and define
\[
  \Psi(q)
  \coloneqq 
  \begin{cases}
  \dfrac{q_1^2-q_2^2}{|q|},&q\ne0,\\[0.8em]
  0,&q=0.
  \end{cases}
\]
The uncut and physical-cutoff Hamiltonians can then be written on all of
$\mathbb R^2$ as
\begin{align}
  \Hunc(x,q)
  &\coloneqq |q|-\mu\Phi(x)\Psi(q)+A V_0(x)\cdot q,
  \label{eq:Hunc}\\
  \Hcut(x,q)
  &\coloneqq \bigl(|q|-\mu\Phi(x)\Psi(q)\bigr)_+
     +A V_0(x)\cdot q.\nonumber
\end{align}

\begin{lemma}[Regularity of the Uncut and Cutoff Hamiltonians]
\label{lem:unc-cut-regularity}
The Hamiltonians $\Hunc$ and $\Hcut$ are continuous, periodic in $x$,
positively homogeneous in $q$, and globally Lipschitz in $q$, uniformly
in $x$. Moreover, there exists a constant $C=C(A,d)>0$ such that
\[
  |H_i(x,q)-H_i(y,q)|
  \le C \dist(x,y)|q|,
  \qquad
  H_i\in\{\Hunc,\Hcut\}.
\]
In particular, both Hamiltonians satisfy the hypotheses of
\Cref{thm:comparison-background}.
\end{lemma}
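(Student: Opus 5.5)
The plan is to reduce everything to elementary properties of the three building blocks $|q|$, $\Psi(q)$, and the smooth coefficients $\Phi(x)$, $V_0(x)$, and then pass these properties through the operations used to form $\Hunc$ and $\Hcut$.

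First, continuity, periodicity and positive homogeneity. The functions $\Phi$ and $V_0$ are smooth and $2\pi$-periodic. The map $q\mapsto|q|$ is positively homogeneous of degree one, and so is $\Psi$: for $\lambda>0$,
\[
\Psi(\lambda q)=\lambda^2(q_1^2-q_2^2)/(\lambda|q|)=\lambda\Psi(q),
\]
and this also holds at $q=0$. The map $a\mapsto a_+$ commutes with multiplication by $\lambda>0$. Hence both Hamiltonians are positively homogeneous in $q$ and periodic in $x$. For continuity, the only issue is at $q=0$, and there $|\Psi(q)|\le(q_1^2+q_2^2)/|q|=|q|\to0$. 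Since $a\mapsto a_+$ is $1$-Lipschitz, continuity of the uncut expression transfers to $\Hcut$.

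Second, the global Lipschitz bound in $q$, which is the only real step. I will show that $\Psi$ is Lipschitz on $\R^2$. Away from the origin, $\Psi$ is smooth and homogeneous of degree one, so $D\Psi$ is homogeneous of degree zero. Therefore $D\Psi$ is bounded by its maximum on the unit circle; an explicit computation in polar form, $\Psi=r\cos2\theta$, gives $|D\Psi|\le\sqrt{\cos^2 2\theta+4\sin^2 2\theta}\le2$. To handle segments through the origin, I will either perturb the segment slightly and pass to the limit using continuity, or note that $\R^2\setminus\{0\}$ is connected by paths of length arbitrarily close to $|q-q'|$. Either way, $\mathrm{Lip}(\Psi)\le2$. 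Then
\[
|\Hunc(x,q)-\Hunc(x,q')|\le(1+2\mu+A\|V_0\|_\infty)|q-q'|,
\]
and the same bound holds for $\Hcut$ because the positive part is $1$-Lipschitz.

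Third, the estimate in $x$. For fixed $q$, the difference is
\[
\Hunc(x,q)-\Hunc(y,q)=-\mu(\Phi(x)-\Phi(y))\Psi(q)+A(V_0(x)-V_0(y))\cdot q.
\]
Using $|\Psi(q)|\le|q|$ and the Lipschitz bounds of the periodic functions $\Phi$ and $V_0$ with respect to the torus distance (their gradients are bounded by small absolute constants), this is at most $C(\mu+A)\dist(x,y)|q|$. For $\Hcut$, the $1$-Lipschitz property of $a\mapsto a_+$ gives the same bound. So $C=C(A,d)$ with $\mu=Ad$.

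Finally, I will match these properties against the list of hypotheses in \Cref{thm:comparison-background}: continuity, periodicity, Lipschitz dependence in $q$, and the structure condition $|H(x,q)-H(y,q)|\le C\dist(x,y)|q|$. The main obstacle is only the Lipschitz property of $\Psi$ across the origin; the rest is bookkeeping.
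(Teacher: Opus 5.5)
Your proposal is correct and follows the paper's proof in every step but one. The shared steps are: periodicity and homogeneity from the building blocks, the $1$-Lipschitz positive part, $|\Phi|,|V_0|\le1$, and $|\Psi(q)|\le|q|$ for the $x$-estimate with Lipschitz constants of $\Phi,V_0$ on the torus. The exception is the global Lipschitz bound for $\Psi$, which you rightly identify as the only real step.

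\textbf{Paper's route.} The paper writes $\Psi=h_1-h_2$ with $h_i(q)=q_i^2/|q|$. Assuming $|q|\ge|q'|>0$, it splits $h_i(q)-h_i(q')$ into $(q_i^2-(q_i')^2)/|q|$ plus $(q_i')^2(1/|q|-1/|q'|)$ and bounds these by $2|q-q'|$ and $|q-q'|$. This is purely algebraic and needs no differentiability. It handles the origin by $0\le h_i(q)\le|q|$, and the price is the non-sharp constant $6$.

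\textbf{Your route.} You use degree-one homogeneity and the polar form $\Psi=r\cos2\theta$ to get $|D\Psi|^2=1+3\sin^2 2\theta\le4$ off the origin. This gives the sharp constant $2$ and a conceptual reason for the bound, but it forces an extra step across the non-smooth point. Your perturbation argument works. It is even simpler to note that if $0\in[q,q']$ then $|q-q'|=|q|+|q'|$, so $|\Psi(q)-\Psi(q')|\le|\Psi(q)|+|\Psi(q')|\le|q-q'|$ directly.

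The value of the constant is irrelevant downstream, so either route suffices. One small correction on the final matching: the hypothesis in \Cref{thm:comparison-background} is the modulus condition, not the linear bound itself. It follows from your bound with $\omega(s)=Cs$, since $\dist(x,y)|q|\le(1+|q|)\dist(x,y)$.
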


\begin{proof}
For $i=1,2$, set
\[
  h_i(q)
  \coloneqq 
  \begin{cases}
  q_i^2/|q|,&q\ne0,\\
  0,&q=0.
  \end{cases}
\]
We first show that each $h_i$ is globally Lipschitz. Let $q,q'\in\mathbb R^2$. Assume, without loss of generality, that
$|q|\ge|q'|>0$. Then,
\begin{align*}
  |h_i(q)-h_i(q')|
  &\le
  \frac{|q_i^2-(q_i')^2|}{|q|}
  +(q_i')^2
  \left|\frac1{|q|}-\frac1{|q'|}\right|\\
  &\le 2|q-q'|+|q-q'|
  =3|q-q'|.
\end{align*}
If $q'=0$, the same estimate follows from
$0\le h_i(q)\le|q|$. Thus $\Psi=h_1-h_2$ is globally Lipschitz, with
Lipschitz constant at most $6$, and
\[
  |\Psi(q)|\le|q|.
\]

The map $r\mapsto r_+$ is $1$-Lipschitz. Since
$|\Phi|\le1$ and $|V_0|\le1$, both Hamiltonians are therefore globally
Lipschitz in $q$, uniformly in $x$. Finally, if $L_\Phi$ and $L_V$ are
Lipschitz constants for $\Phi$ and $V_0$ on the torus, then
\[
  |H_i(x,q)-H_i(y,q)|
  \le
  \bigl(\mu L_\Phi+A L_V\bigr)
  \dist(x,y)|q|,
\]
for $H_i=\Hunc,\Hcut$. The remaining claims follow directly.
\end{proof}

\subsection{Comparison for Planar Data}
For each $p\in\mathbb R^2$, let
  $G_{\mathrm{unc}}^p$
   and 
  $G_+^p$ denote the viscosity solutions associated with $\Hunc$ and $\Hcut$,
respectively, with common planar initial datum $p\cdot x$.

For a slope $p$, write $G_i(x,t)=p\cdot x+u_i(x,t)$, where $u_i$
is periodic. Standard viscosity comparison on the torus applies to
$u_i$; the conventions and order principle used below are recorded in
Appendix \ref{app:viscosity}.

\begin{lemma}[Order Reversal Between Hamiltonians and Solutions]\label{lem:comparison}
Let $H_1,H_2$ be continuous periodic Hamiltonians for which comparison holds, and suppose
\[
  H_1(x,q)\le H_2(x,q),
  \qquad\text{for all }x,q.
\]
If $G_i$ solves
\[
  (G_i)_t+H_i(x,DG_i)=0,
  \qquad
  G_i(x,0)=p\cdot x,
\]
then
\[
  G_2(x,t)\le G_1(x,t),
  \qquad\text{for all }x,t\ge0.
\]
\end{lemma}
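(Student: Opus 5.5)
The plan is to reduce to the periodic correction and apply the comparison principle on the torus. Write $G_i(x,t)=p\cdot x+u_i(x,t)$ with $u_i$ periodic. Then $u_i$ is a viscosity solution of
\[
  (u_i)_t+H_i^p(x,Du_i)=0,
  \qquad
  u_i(x,0)=0,
  \qquad
  H_i^p(x,q)\coloneqq H_i(x,p+q).
\]
This holds because test functions for $G_i$ at a point correspond exactly to test functions for $u_i$ after subtracting the smooth function $p\cdot x$. The shifted Hamiltonians $H_i^p$ remain continuous and periodic. They also inherit the structural hypotheses under which comparison holds (for instance, those of \Cref{thm:comparison-background}, as verified for $\Hunc$ and $\Hcut$ in \Cref{lem:unc-cut-regularity}), since the shift by $p$ preserves Lipschitz continuity in $q$ and the bound $|H(x,q)-H(y,q)|\le C\dist(x,y)(1+|q|)$, up to adjusting constants.

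Next I would show that $u_2$ is a viscosity subsolution of the $H_1^p$ equation. Let $\varphi$ be smooth and let $u_2-\varphi$ attain a local maximum at $(x_0,t_0)$ with $t_0>0$. The subsolution property of $u_2$ for $H_2^p$ gives
\[
  \varphi_t(x_0,t_0)+H_2^p\bigl(x_0,D\varphi(x_0,t_0)\bigr)\le0 .
\]
Since $H_1^p\le H_2^p$ pointwise, it follows that
\[
  \varphi_t(x_0,t_0)+H_1^p\bigl(x_0,D\varphi(x_0,t_0)\bigr)\le0 .
\]
So $u_2$ is a subsolution of the $H_1^p$ equation, and $u_1$ is a supersolution of the same equation. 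Both have the initial datum $0$.

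The comparison principle for $H_1^p$ on $\T^2_{2\pi}\times[0,T]$ then gives $u_2\le u_1$ for every $T$. Adding $p\cdot x$ to both sides yields $G_2\le G_1$.

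The only real obstacle is bookkeeping: one must confirm that "comparison holds for $H_i$" transfers to the shifted Hamiltonians $H_i^p$ on the torus, with bounded continuous periodic sub- and supersolutions. This follows from the same structural estimates, because $|p+q|\le|p|+|q|$ only adds an affine term in $|q|$. Continuity of the $u_i$ up to $t=0$, and hence their boundedness on $[0,T]$, is part of the solution concept recorded in Appendix~\ref{app:viscosity}.
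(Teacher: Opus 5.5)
Your proposal is correct and follows the paper's proof. You show that $G_2$ (equivalently $u_2$) is a viscosity subsolution of the $H_1$ equation because $H_1\le H_2$ at the test-function gradient, then subtract $p\cdot x$ and apply periodic comparison with common initial datum $0$; the only differences are that you subtract $p\cdot x$ before the subsolution step rather than after it, and that you spell out the test-function bookkeeping that the paper leaves implicit.
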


\begin{proof}
The solution $G_2$ is a viscosity subsolution of the $H_1$ equation because
\[
  (G_2)_t+H_1(x,DG_2)
  \le (G_2)_t+H_2(x,DG_2)=0.
\]
Subtract $p\cdot x$ and apply periodic comparison with the common initial datum $0$.
\end{proof}

\begin{proposition}[Quantitative Phase Separation from a Hamiltonian Sandwich]
\label{prop:phase-separation-sandwich}
Let $n\ge1$ and $L>0$, and let
\[
  H_{\mathrm{lo}},H,H_{\mathrm{hi}}:
  \T_L^n\times\mathbb R^n\to\mathbb R
\]
be continuous Hamiltonians, periodically lifted in the state variable,
for which comparison holds in the planar periodic class.  Assume
\[
  H_{\mathrm{lo}}(x,q)\le H(x,q)\le H_{\mathrm{hi}}(x,q),
  \qquad\text{for all }x,q.
\]
Fix $p\in\mathbb R^n$, and let
$G_{\mathrm{lo}}^p,G^p,G_{\mathrm{hi}}^p$ be the corresponding
viscosity solutions on $\mathbb R^n$ with common planar initial datum
$p\cdot x$.  Suppose that there exist
$x_{\mathrm s},x_{\mathrm f}\in\mathbb R^n$,
a constant $B\in\mathbb R$, and a constant $c>0$ such that
\[
  G_{\mathrm{hi}}^p(x_{\mathrm s},t)\ge B,
  \qquad\text{for all }t\ge0,
\]
and
\[
  \liminf_{t\to\infty}
  -\frac{G_{\mathrm{lo}}^p(x_{\mathrm f},t)}{t}
  \ge c.
\]
Set $u^p(x,t)=G^p(x,t)-p\cdot x$.  Then,
\[
  G^p(x_{\mathrm s},t)\ge B,
  \quad\text{for all }t\ge0,
  \qquad
  \liminf_{t\to\infty}
  -\frac{G^p(x_{\mathrm f},t)}{t}
  \ge c,
\]
and
\begin{equation}\label{eq:sandwich-oscillation}
  \liminf_{t\to\infty}
  \frac{\osc_{\T_L^n}u^p(\cdot,t)}{t}
  \ge c.
\end{equation}
Consequently, $G^p$ has no phase-independent long-time rate.  In the
$G$-equation setting, this excludes the existence of an effective
burning velocity.
\end{proposition}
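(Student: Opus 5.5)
The plan is to apply \Cref{lem:comparison} twice and then combine the two resulting pointwise bounds through the periodic correction $u^p$.

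\emph{Ordering the solutions.} First, $H\le H_{\mathrm{hi}}$ and \Cref{lem:comparison}, applied with $H_1=H$ and $H_2=H_{\mathrm{hi}}$, give $G_{\mathrm{hi}}^p\le G^p$ everywhere. Second, $H_{\mathrm{lo}}\le H$ and the same lemma, applied with $H_1=H_{\mathrm{lo}}$ and $H_2=H$, give $G^p\le G_{\mathrm{lo}}^p$. The lemma was stated on the torus of period $2\pi$, but its proof uses only comparison in the planar periodic class, which is assumed here for general $L$ and $n$. So it transfers verbatim: subtract $p\cdot x$ and compare the periodic corrections, which share the initial datum $0$.

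\emph{The two pointwise estimates.} From $G^p\ge G_{\mathrm{hi}}^p$ we get $G^p(x_{\mathrm s},t)\ge G_{\mathrm{hi}}^p(x_{\mathrm s},t)\ge B$ for every $t\ge0$. From $G^p\le G_{\mathrm{lo}}^p$ we get $-G^p(x_{\mathrm f},t)/t\ge -G_{\mathrm{lo}}^p(x_{\mathrm f},t)/t$ for $t>0$. Taking $\liminf$ then gives the fast estimate with constant $c$.

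\emph{The oscillation bound.} Since $u^p(\cdot,t)$ is $L$-periodic, its oscillation over $\T_L^n$ equals its oscillation over $\mathbb R^n$. It is therefore at least
\[
u^p(x_{\mathrm s},t)-u^p(x_{\mathrm f},t)=G^p(x_{\mathrm s},t)-G^p(x_{\mathrm f},t)-p\cdot(x_{\mathrm s}-x_{\mathrm f})\ge B-p\cdot(x_{\mathrm s}-x_{\mathrm f})-G^p(x_{\mathrm f},t).
\]
Divide by $t$. The constant $B-p\cdot(x_{\mathrm s}-x_{\mathrm f})$ contributes $o(1)$, so
\[
\liminf_{t\to\infty}\frac{\osc u^p(\cdot,t)}{t}\ge\liminf_{t\to\infty}-\frac{G^p(x_{\mathrm f},t)}{t}\ge c,
\]
which is \eqref{eq:sandwich-oscillation}.

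\emph{No effective velocity.} Suppose a uniform limit $u^p(x,t)/t\to-\overline H(p)$ existed. Then $\osc u^p(\cdot,t)/t\to0$, contradicting the bound above since $c>0$. More concretely, the rate at $x_{\mathrm s}$ would be $\ge0$ while the rate at $x_{\mathrm f}$ would be $\le -c$, so no single $\overline H(p)$ can serve both points. This is exactly \eqref{eq:effective-speed-intro} failing.

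\emph{Main obstacle.} There is no substantive difficulty. The only points needing care are that comparison is invoked in the correct class, namely periodic corrections of planar data with a common initial datum, and that the oscillation is taken over $\mathbb R^n$ rather than a fundamental cell, which is harmless because of periodicity. Everything else is bookkeeping with the two inequalities and the direction of the order reversal.
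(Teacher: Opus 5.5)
Your proposal is correct and follows essentially the same route as the paper: you derive $G_{\mathrm{hi}}^p\le G^p\le G_{\mathrm{lo}}^p$ from \Cref{lem:comparison}, read off the two pointwise estimates, and bound the oscillation below by $u^p(x_{\mathrm s},t)-u^p(x_{\mathrm f},t)$ before dividing by $t$. The only cosmetic difference is that you insert the already transferred bound on $G^p(x_{\mathrm f},t)$ where the paper uses $G_{\mathrm{lo}}^p(x_{\mathrm f},t)$ directly, which changes nothing.
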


\begin{proof}
By \Cref{lem:comparison}, the Hamiltonian inequalities imply
\[
  G_{\mathrm{hi}}^p\le G^p\le G_{\mathrm{lo}}^p.
\]
The first two conclusions follow directly from the assumed bounds.  For
the oscillation, one has
\begin{align*}
  \osc_{\T_L^n}u^p(\cdot,t)
  \ge u^p(x_{\mathrm s},t)-u^p(x_{\mathrm f},t)\ge B-p\cdot x_{\mathrm s}
       -G_{\mathrm{lo}}^p(x_{\mathrm f},t)
       +p\cdot x_{\mathrm f}.
\end{align*}
Dividing by $t$ and taking the lower limit gives
\eqref{eq:sandwich-oscillation}.  A phase-independent effective rate
would force $\osc u^p(\cdot,t)/t\to0$, contradicting that estimate.
\end{proof}

Since $a_+\ge a$, then $\Hunc\le\Hcut$. Hence
\begin{equation}\label{eq:cut-below-unc-solution}
  G_+^p\le G_{\mathrm{unc}}^p,
  \qquad\text{for every }p\in\mathbb R^2.
\end{equation}
The second half of the sandwich will follow from the credal majorant constructed in \Cref{sec:credal}.

\section{The Fast Point From the Uncut Equation}\label{sec:fast}

The following is the precise result  from Xin and Yu that we use in our argument.

\begin{theorem}[Xin-Yu Fast Point Estimate {\cite[Theorem~1.3(ii)]{XinYu2014}}]
\label{thm:xinyu}
There exists a universal constant $C>0$ with the following property.  Suppose
\[
  0<d<\frac14,
  \qquad
  A\ge4,
  \qquad
  Ad\le1+\frac d{10}.
\]
Let $\Gunc^p$ solve the uncut strain equation with cellular flow \eqref{eq:V0} and initial datum $\Gunc^p(x,0)=p\cdot x$, where $p$ is a unit vector.  Then, at
\[
  x_{\mathrm f}=\left(\frac\pi2,0\right),
\]
one has
\[
  \liminf_{t\to\infty}
  -\frac{\Gunc^p(x_{\mathrm f},t)}t
  \ge C\frac A{\log A}.
\]
\end{theorem}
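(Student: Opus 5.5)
Since this is \cite[Theorem~1.3(ii)]{XinYu2014}, the plan is to import it rather than reprove it. Two things need checking. First, that the uncut equation written in \eqref{eq:Hunc} coincides with the Xin--Yu strain $G$-equation for the cellular flow \eqref{eq:V0}, including the normalization of $A$ and $d$ and the meaning of $x_{\mathrm f}$. Second, that the hypotheses $0<d<1/4$, $A\ge4$, $Ad\le 1+d/10$ follow from the window \eqref{eq:parameter-window}. The second point is elementary. From $d<20/399<1/4$ and $\mu>\sqrt{1+4d^2}>1$ we get $A=\mu/d>1/d>399/20>4$, and the upper condition is assumed verbatim.

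If one wanted a self-contained argument, I would follow the Lagrangian mechanism behind the $A/\log A$ law and proceed by comparison, exactly as in \Cref{lem:comparison}. The aim is to find a Hamiltonian $H_{\mathrm{lo}}\le\Hunc$ whose solution decays linearly at $x_{\mathrm f}$. The first attempt would use $|\Psi(q)|\le|q|$, which gives
\[
  |q|-\mu\Phi(x)\Psi(q)\ge(1-\mu|\Phi(x)|)|q|.
\]
Away from small neighbourhoods of the hyperbolic points, where $|\Phi|\le 1-\delta$, the coefficient $1-\mu|\Phi(x)|$ is at least $c>0$. Near the hyperbolic points it is no worse than $1-\mu\ge -d/10$. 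The intent is to represent the resulting lower Hamiltonian, or a suitably convexified piece of it, by a control problem. In that problem the controlled trajectories ride along the separatrices $x_2\in\pi\mathbb Z$ and $x_1\in\pi\mathbb Z$ at speed comparable to $A$. Note that $x_{\mathrm f}=(\pi/2,0)$ lies on such a separatrix, with $V_A(x_{\mathrm f})=(-A,0)$.

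The expected $A/\log A$ rate comes from the corners. Passing each hyperbolic stagnation point costs time of order $\log A/A$: the burning control of size $O(1)$ pushes the trajectory off the stagnation point, after which the exponential departure rate $A$ takes over. Meanwhile each cell edge of length $\pi$ is traversed in time $O(1/A)$. Concatenating these moves and choosing them to advance in the direction of $p$ gives $p\cdot Y(t)\gtrsim t\,A/\log A$. Through the control representation this yields the bound on $-\Gunc^p(x_{\mathrm f},t)/t$.

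The main obstacle is the nonconvexity of $\Hunc$ and the small negative burning coefficient near the corners, up to $d/10$ in size. A naive lower Hamiltonian $c(x)|q|-\tfrac d{10}|q|+AV_0\cdot q$ is not convex, so it has no one-player control formula. To handle this I would isolate the corner neighbourhoods and use that $\Psi$ vanishes along the diagonal directions. I would also use that, close to the separatrices, the relevant gradients are essentially normal to them, so there $-\mu\Phi\Psi(q)$ has a favourable sign. The point of the bound $\mu\le 1+d/10$ is to keep the corner escape time at order $\log A/A$ despite the negative coefficient. This delicate estimate is precisely the content of Xin and Yu's proof, and for the present paper I would invoke it directly.
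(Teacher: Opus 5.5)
Your proposal matches the paper: the paper also imports the statement from \cite[Theorem~1.3(ii)]{XinYu2014} without reproving it, and asserts that the cellular field, $A$, $d$, the planar datum and $x_{\mathrm f}$ match Xin--Yu's normalization. It checks the hypotheses exactly as you do, via $d<20/399<1/4$ and $A=\mu/d>1/d>399/20>4$, with $\mu\le1+d/10$ taken verbatim. Your Lagrangian sketch is heuristic and not needed, since both you and the paper ultimately rely on Xin and Yu's proof.
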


\Cref{thm:xinyu} has been stated in the notation and normalization
used in this paper. In particular, its cellular field, spatial period,
flow amplitude $A$, Markstein parameter $d$, planar initial datum, and
distinguished point $x_{\mathrm f}$ agree with
\eqref{eq:V0}, \eqref{eq:Hunc}, and the definitions above; no spatial
or temporal rescaling is used. Under \eqref{eq:parameter-window},
\[
  d<\frac14,
  \qquad
  A=\frac\mu d>\frac1d>\frac{399}{20}>4,
  \qquad
  \mu\le1+\frac d{10},
\]
so every hypothesis of \Cref{thm:xinyu} is satisfied with the same
universal constant $C$.

\begin{proposition}[The Cutoff Preserves the Fast Point]
\label{prop:fast-cutoff}
Under the hypotheses of \Cref{thm:main}, for every unit vector
$p\in\mathbb R^2$,
\[
  \liminf_{t\to\infty}
  -\frac{G_+^p(x_{\mathrm f},t)}t
  \ge C\frac A{\log A}.
\]
\end{proposition}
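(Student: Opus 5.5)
The plan is to combine the one-sided solution comparison \eqref{eq:cut-below-unc-solution} with the Xin--Yu estimate of \Cref{thm:xinyu}; no new analysis of the cutoff equation is needed. Fix a unit vector $p\in\mathbb R^2$.

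\emph{Step 1: check the hypotheses of \Cref{thm:xinyu}.} This was already done in the remark following that theorem, but it should be recorded here. From \eqref{eq:parameter-window} we have $d<20/399<1/4$. We also have $\mu=Ad>\sqrt{1+4d^2}>1$, so
\[
A=\mu/d>1/d>399/20>4 .
\]
Finally, $Ad\le 1+d/10$ holds by assumption. Since the normalization of the cellular field, the point $x_{\mathrm f}=(\pi/2,0)$, and the planar datum are identical to those in \Cref{thm:xinyu}, that theorem gives
\[
\liminf_{t\to\infty}-\frac{\Gunc^p(x_{\mathrm f},t)}{t}\ge C\frac{A}{\log A}
\]
with the universal constant $C$.

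\emph{Step 2: order the solutions.} Since $(a)_+\ge a$, we have $\Hunc\le\Hcut$ pointwise. Both Hamiltonians satisfy the comparison hypotheses by \Cref{lem:unc-cut-regularity}. Applying \Cref{lem:comparison} with $H_1=\Hunc$ and $H_2=\Hcut$ yields
\[
G_+^p(x,t)\le\Gunc^p(x,t)\qquad\text{for all }x\text{ and }t\ge0,
\]
which is \eqref{eq:cut-below-unc-solution}. Evaluating at $x=x_{\mathrm f}$ gives, for every $t>0$,
\[
-\frac{G_+^p(x_{\mathrm f},t)}{t}\ \ge\ -\frac{\Gunc^p(x_{\mathrm f},t)}{t}.
\]
Taking $\liminf_{t\to\infty}$ on both sides gives the claim.

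The only point where care is needed is the comparison step. Comparison is applied to the periodic corrections $u=G-p\cdot x$, which solve $u_t+H(x,p+Du)=0$ on the torus. The shifted Hamiltonians $q\mapsto H_i(x,p+q)$ inherit continuity, periodicity, the uniform Lipschitz bound in $q$, and the modulus in $x$ of the form $C\dist(x,y)(|q|+|p|)$. So the periodic comparison theorem of the appendix still applies with common initial datum $0$.

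I expect no genuine obstacle. The one thing to make sure of is that the cited Xin--Yu constant is indeed independent of $p$, $A$ and $d$ in exactly the stated normalization, so that the same universal $C$ transfers.
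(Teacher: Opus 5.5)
Your proposal is correct and matches the paper's proof: it orders the solutions as $G_+^p\le\Gunc^p$ using $\Hunc\le\Hcut$ and \Cref{lem:comparison}, divides by $t$, and applies \Cref{thm:xinyu}. You verify that theorem's hypotheses exactly as the paper does in the remark following it.
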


\begin{proof}
The Xin-Yu estimate in \Cref{thm:xinyu} holds for every unit vector $p$,
with the same universal constant $C$. By
\eqref{eq:cut-below-unc-solution},
\[
  -G_+^p(x_{\mathrm f},t)
  \ge -G_{\mathrm{unc}}^p(x_{\mathrm f},t).
\]
Then, it is enough to divide by $t>0$ and to apply \Cref{thm:xinyu}.
\end{proof}

\begin{remark}[Physical Meaning of the Comparison]
Where the strain-corrected local burning speed in the uncut model becomes
negative, the physical cutoff replaces it by zero. It therefore prevents
reversal of the flame's normal burning motion and, by the comparison
above, can only lower the level set function. Fast forward propagation
in the uncut model is consequently inherited by the physical model.
\end{remark}

\section{A Credal Upper Envelope}\label{sec:credal}
We now construct the second comparison needed in the proof: a convex
Hamiltonian $\widehat H$ that dominates the physical cutoff Hamiltonian.
It admits an exact interpretation as an upper expectation over a credal
set of probability laws supported on the state-dependent set
$\mathcal K(x)$.  For each planar slope $p$, the corresponding solution
$\widehat G^p$ lies below $G_+^p$ and provides the whole channel lower
bound needed for phase separation.
\subsection{A Coordinate Rectangular Envelope}
The next estimate identifies the part of the construction that is not
specific to the trigonometric cellular flow.

\begin{lemma}[Coordinate Rectangular Domination]
\label{lem:rectangular-domination}
Let $V:\T_L^2\to\mathbb R^2$ be $C^1$, let $d>0$, and fix an
orthonormal basis $(\tau,\nu)$ of $\mathbb R^2$.  Write
\[
  s_{\tau\tau}=\tau^{\mathsf T}S_V\tau,
  \qquad
  s_{\nu\nu}=\nu^{\mathsf T}S_V\nu,
  \qquad
  s_{\tau\nu}=\tau^{\mathsf T}S_V\nu,
\]
and define
\[
  b_\tau=(1+ds_{\tau\tau})_+ +d|s_{\tau\nu}|,
  \qquad
  b_\nu=(1+ds_{\nu\nu})_+ +d|s_{\tau\nu}|.
\]
Then, for every $q\in\mathbb R^2$,
\begin{equation}\label{eq:coordinate-majorant}
  \left(
    1+d\frac{q^{\mathsf T}S_V(x)q}{|q|^2}
  \right)_+|q|
  \le
  b_\tau(x)|q\cdot\tau|+b_\nu(x)|q\cdot\nu|,
\end{equation}
with both sides understood as zero at $q=0$.
\end{lemma}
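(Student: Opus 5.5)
The plan is to expand the quadratic form in the basis $(\tau,\nu)$ and then bound each piece of the positive part separately, exactly as in the derivation of \eqref{upper-bd}, with an additional cross term. Fix $x$ and $q\ne0$ (the case $q=0$ is trivial). Write $\alpha=q\cdot\tau$ and $\beta=q\cdot\nu$, so that $|q|^2=\alpha^2+\beta^2$ and
\[
  q^{\mathsf T}S_Vq=s_{\tau\tau}\alpha^2+2s_{\tau\nu}\alpha\beta+s_{\nu\nu}\beta^2,
\]
using the symmetry of $S_V$. Multiplying through by $|q|$, the left side of \eqref{eq:coordinate-majorant} becomes
\[
  \frac{\bigl[(1+ds_{\tau\tau})\alpha^2+(1+ds_{\nu\nu})\beta^2+2ds_{\tau\nu}\alpha\beta\bigr]_+}{|q|}.
\]
Here positive homogeneity lets us pull $|q|^2$ inside the positive part, since $|q|\,(a)_+=(|q|a)_+$.

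Next, apply subadditivity of the positive part, $[r+s+w]_+\le r_++s_++w_+$. We bound $[(1+ds_{\tau\tau})\alpha^2]_+=(1+ds_{\tau\tau})_+\alpha^2$ and similarly for the $\beta^2$ term, while the cross term satisfies $[2ds_{\tau\nu}\alpha\beta]_+\le 2d|s_{\tau\nu}||\alpha||\beta|$. Dividing by $|q|$, we then use the elementary inequalities
\[
  \frac{\alpha^2}{|q|}\le|\alpha|,\qquad \frac{\beta^2}{|q|}\le|\beta|,\qquad \frac{2|\alpha||\beta|}{|q|}\le |\alpha|+|\beta|.
\]
The last of these holds because $2|\alpha||\beta|\le(|\alpha|+|\beta|)\min\{|\alpha|,|\beta|\}\cdot 2/\ldots$; more cleanly, $2|\alpha||\beta|\le |\alpha|\,|q|+|\beta|\,|q|$, since $|\beta|\le|q|$ and $|\alpha|\le|q|$ give $|\alpha||\beta|\le|\alpha||q|$ and $|\alpha||\beta|\le|\beta||q|$.

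Collecting terms gives $(1+ds_{\tau\tau})_+|\alpha|+d|s_{\tau\nu}|(|\alpha|+|\beta|)+(1+ds_{\nu\nu})_+|\beta|$, which is exactly $b_\tau|q\cdot\tau|+b_\nu|q\cdot\nu|$.

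No step is a real obstacle. The only point needing care is the cross term: we want the split $|\alpha|+|\beta|$ with coefficient exactly $d|s_{\tau\nu}|$ on each side, and the bound $2|\alpha\beta|\le|q|(|\alpha|+|\beta|)$ supplies precisely that constant. The $C^1$ hypothesis on $V$ is used only to make $S_V$ defined and continuous; no regularity enters the pointwise inequality itself.
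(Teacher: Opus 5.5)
Your proof is correct and follows the paper's argument: expand $q^{\mathsf T}(I+dS_V)q$ in the basis $(\tau,\nu)$, apply subadditivity of the positive part with the cross term bounded by its absolute value, and finish with $q_i^2/|q|\le|q_i|$ and $2|q_\tau q_\nu|/|q|\le|q_\tau|+|q_\nu|$. The only blemish is the abandoned half-sentence ending in ``$\cdot 2/\ldots$'', which you should delete, since the justification that follows it is already complete.
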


\begin{proof}
For $q\ne0$, write
$q_\tau=q\cdot\tau$ and $q_\nu=q\cdot\nu$.  Then
\[
  q^{\mathsf T}(I+dS_V)q
  =(1+ds_{\tau\tau})q_\tau^2
   +2ds_{\tau\nu}q_\tau q_\nu
   +(1+ds_{\nu\nu})q_\nu^2.
\]
Using
\[
  [r+s+t]_+\le r_++|s|+t_+,
  \qquad
  \frac{q_i^2}{|q|}\le|q_i|,
  \qquad
  \frac{2|q_\tau q_\nu|}{|q|}
  \le|q_\tau|+|q_\nu|,
\]
and dividing by $|q|$ gives \eqref{eq:coordinate-majorant}.
\end{proof}

For the cellular flow, take $\tau=e_1$ and $\nu=e_2$.  By
\eqref{eq:strain-tensor-intro}, the mixed strain vanishes identically,
and \Cref{lem:rectangular-domination} gives
\[
  a_1(x)\coloneqq(1-\mu\Phi(x))_+,
  \qquad
  a_2(x)\coloneqq(1+\mu\Phi(x))_+.
\]
Define the convex Hamiltonian
\[
  \Hhat(x,q)
  \coloneqq A V_0(x)\cdot q+a_1(x)|q_1|+a_2(x)|q_2|.
\]
Then, $\Hcut\le\Hhat$.

\begin{lemma}[Regularity of the Rectangular Majorant]
\label{lem:hat-regularity}
The Hamiltonian $\Hhat$ is continuous, periodic in $x$, positively
homogeneous, convex, and globally Lipschitz in $q$, uniformly in $x$.
Moreover, there exists $C=C(A,d)>0$ such that
\[
  |\Hhat(x,q)-\Hhat(y,q)|
  \le C \dist(x,y)|q|.
\]
Consequently, $\Hhat$ satisfies the hypotheses of
\Cref{thm:comparison-background}.
\end{lemma}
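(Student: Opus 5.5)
The plan is to verify each listed property directly from the explicit formula
\[
  \Hhat(x,q)=A V_0(x)\cdot q+a_1(x)|q_1|+a_2(x)|q_2|,
  \qquad
  a_1=(1-\mu\Phi)_+,\quad a_2=(1+\mu\Phi)_+,
\]
mirroring the proof of \Cref{lem:unc-cut-regularity}. First, continuity and $2\pi$-periodicity in $x$ follow because $V_0$ and $\Phi$ are smooth and $2\pi$-periodic, and $r\mapsto r_+$ is continuous. Next, for fixed $x$ the map $q\mapsto\Hhat(x,q)$ is a sum of a linear function and nonnegative multiples of the seminorms $|q_1|$ and $|q_2|$. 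Hence it is convex and positively homogeneous of degree one. Equivalently, it is the support function $\sup_{v\in\mathcal K(x)}v\cdot q$ of the rectangle $\mathcal K(x)$, which gives convexity and homogeneity at once.

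For the Lipschitz bound in $q$, I would use $|V_0|\le1$ and $0\le a_i\le 1+\mu$, which holds since $|\Phi|\le1$. These give
\[
  |\Hhat(x,q)-\Hhat(x,q')|\le \bigl(A+2(1+\mu)\bigr)|q-q'|,
\]
uniformly in $x$.

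For the state estimate, I would write
\[
  |\Hhat(x,q)-\Hhat(y,q)|\le A|V_0(x)-V_0(y)||q|+|a_1(x)-a_1(y)||q_1|+|a_2(x)-a_2(y)||q_2|.
\]
Since $r\mapsto r_+$ is $1$-Lipschitz, $|a_i(x)-a_i(y)|\le\mu|\Phi(x)-\Phi(y)|$. With $L_\Phi$ and $L_V$ Lipschitz constants of $\Phi$ and $V_0$ for the geodesic distance on $\T^2_{2\pi}$, this yields
\[
  |\Hhat(x,q)-\Hhat(y,q)|\le (AL_V+2\mu L_\Phi)\dist(x,y)|q|,
\]
so one can take $C=AL_V+2\mu L_\Phi$. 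Lipschitz constants measured in the geodesic distance on the torus coincide with those on $\mathbb R^2$ by periodicity. Concretely, $L_V,L_\Phi\le\sqrt2$ suffice.

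Finally, I would observe that these are exactly the structural hypotheses used for \Cref{thm:comparison-background}: continuity, periodicity, a uniform Lipschitz bound in $q$, and the $\dist(x,y)|q|$ modulus in $x$. This is the same list verified in \Cref{lem:unc-cut-regularity}, so the conclusion follows.

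No step is a genuine obstacle. The only point needing care is matching the precise form of the hypotheses in \Cref{thm:comparison-background}, which is recorded in the appendix. If that theorem requires a bound of the form $|H(x,q)|\le C(1+|q|)$, it follows from $|\Hhat(x,q)|\le(A+2+2\mu)|q|$.
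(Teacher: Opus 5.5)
Your proposal is correct and follows the paper's proof essentially step by step. You bound $0\le a_i\le 1+\mu$ to get the uniform Lipschitz estimate in $q$, and you use the $1$-Lipschitz positive part, $|a_i(x)-a_i(y)|\le\mu|\Phi(x)-\Phi(y)|$, for the $\dist(x,y)|q|$ estimate; the only difference is that you use $|q_1|+|q_2|\le 2|q|$ where the paper uses the sharper $\sqrt2|q|$, which does not matter.
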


\begin{proof}
The functions
\[
  a_1=(1-\mu\Phi)_+,
  \qquad
  a_2=(1+\mu\Phi)_+
\]
are Lipschitz and satisfy $0\le a_i\le1+\mu$. Hence
\[
  |\Hhat(x,q)-\Hhat(x,q')|
  \le
  \bigl[A+\sqrt2(1+\mu)\bigr]|q-q'|.
\]
If $L_\Phi$ and $L_V$ are Lipschitz constants for $\Phi$ and $V_0$,
then
\[
  |\Hhat(x,q)-\Hhat(y,q)|
  \le
  \bigl(A L_V+\sqrt2 \mu L_\Phi\bigr)
  \dist(x,y)|q|.
\]
Convexity and positive homogeneity follow directly from the formula for
$\Hhat$.
\end{proof}

By Lemma \ref{lem:rectangular-domination}, we have that 
  $\Hunc\le\Hcut\le\Hhat$.
For each $p\in\mathbb R^2$, let $\widehat G^p$ denote the viscosity
solution associated with $\Hhat$ and initial datum $p\cdot x$. For every $p\in\mathbb R^2$, Lemma \ref{lem:comparison} yields
  $\widehat G^p
  \le G_+^p
  \le G_{\mathrm{unc}}^p$.

\subsection{Upper Expectations and the Local Credal Set}
Set
\begin{equation}\label{eq:velocity-rectangle}
  \mathcal K(x)
  \coloneqq A V_0(x)
   +[-a_1(x),a_1(x)]e_1
   +[-a_2(x),a_2(x)]e_2.
\end{equation}
The set $\mathcal K(x)$ in \eqref{eq:velocity-rectangle} is a compact
convex rectangle translated by the fluid velocity.  Its support function is precisely
\[
  \sup_{v\in\mathcal K(x)}v\cdot q=\Hhat(x,q).
\]
Equivalently, with
\[
  \mathfrak P_x
  \coloneqq \clco\{\delta_v:v\in\mathcal K(x)\}
  \subset\Pp(\R^2),
\]
where the closure is taken in the weak topology, one has
  $\Hhat(x,q)
  =\upperE_{\mathfrak P_x}[v\cdot q]$.
Thus $\Hhat$ is the upper expectation of the linear observable
$v\mapsto v\cdot q$ over the local credal set $\mathfrak P_x$, whose
elements are probability laws supported on $\mathcal K(x)$. The replacement of the original direction-coupled burning term by the
rectangular majorant gives a pointwise upper bound on the physical
Hamiltonian, while the representation of $\Hhat$ through
$\mathfrak P_x$ is exact. Support functions, credal upper expectations, and the precise
deterministic meaning of rectangularity are summarized in
Appendices \ref{app:credal-background} and \ref{app:control-background}.

\subsection{Robust Control Representation}
Let $\mathcal U$ denote the set of measurable controls $\eta=(\eta_1,\eta_2):[0,\infty)\to[-1,1]^2$.  For $x\in\R^2$ and $\eta\in\mathcal U$, let $X^{x,\eta}$ solve
\begin{equation}\label{eq:majorant-dynamics}
  \dot X(t)
  =-A V_0(X(t))
   +a_1(X(t))\eta_1(t)e_1
   +a_2(X(t))\eta_2(t)e_2,
  \qquad
  X(0)=x.
\end{equation}
The coefficients are globally Lipschitz and periodic, so the lifted ODE has a unique Carath\'eodory solution.

\begin{proposition}[Control Representation]
\label{prop:control-representation}
For every $p\in\mathbb R^2$, the viscosity solution of
\[
  (\widehat G^p)_t+\Hhat(x,D\widehat G^p)=0,
  \qquad
  \widehat G^p(x,0)=p\cdot x,
\]
is
\begin{equation}\label{eq:value-representation}
  \widehat G^p(x,t)
  =
  \inf_{\eta\in\mathcal U}
  p\cdot X^{x,\eta}(t).
\end{equation}
\end{proposition}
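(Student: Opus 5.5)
The plan is to prove that the value function
\[
  W(x,t)\coloneqq\inf_{\eta\in\mathcal U}p\cdot X^{x,\eta}(t)
\]
is a viscosity solution of the $\Hhat$ equation with datum $p\cdot x$. We then identify it with $\widehat G^p$ by the uniqueness part of \Cref{thm:comparison-background}, which applies thanks to \Cref{lem:hat-regularity}.

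\emph{Step 1: the Hamiltonian.} First we check that $\Hhat$ is exactly the Hamiltonian associated with the reversed dynamics. For $q\in\mathbb R^2$ and $f(x,\eta)\coloneqq -AV_0(x)+a_1(x)\eta_1e_1+a_2(x)\eta_2e_2$, we have
\[
  \sup_{\eta\in[-1,1]^2}\bigl(-f(x,\eta)\cdot q\bigr)
  = AV_0(x)\cdot q+a_1(x)|q_1|+a_2(x)|q_2|=\Hhat(x,q),
\]
using $\sup_{|\eta_i|\le1}(-\eta_iq_i)=|q_i|$. This is the standard form for which $W_t+\sup_\eta\{-f\cdot DW\}=0$ is the HJB equation of a Mayer problem $\inf_\eta g(X(t))$ with $g(y)=p\cdot y$.

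\emph{Step 2: properties of the value function.} Next we record the needed properties of $W$. Since $V_0$, $a_1$, $a_2$ are globally Lipschitz and bounded, Gronwall gives $|X^{x,\eta}(t)-X^{y,\eta}(t)|\le e^{Lt}|x-y|$ and $|X^{x,\eta}(t)-x|\le Mt$ uniformly in $\eta$. Hence $W$ is continuous, $W(x,0)=p\cdot x$, and $|W(x,t)-p\cdot x|\le|p|Mt$. Periodicity of the coefficients gives $X^{x+2\pi k,\eta}=X^{x,\eta}+2\pi k$. Therefore $W(x,t)-p\cdot x$ is $2\pi\mathbb Z^2$-periodic, so $W$ belongs to the planar periodic class where comparison holds. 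The set $\mathcal U$ is closed under concatenation and time shifts, and this yields the dynamic programming principle
\[
  W(x,t+s)=\inf_{\eta\in\mathcal U}W\bigl(X^{x,\eta}(s),t\bigr).
\]
This is the rectangularity emphasized earlier, cf.\ \cite[Chapter~III]{BardiCapuzzoDolcetta1997}.

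\emph{Step 3: the viscosity property.} We derive it from the dynamic programming principle in the usual way. Take $\varphi\in C^1$ touching $W$ from above at $(x_0,t_0)$ with $t_0>0$. For a constant control $\eta$, the principle gives $\varphi(x_0,t_0)\le\varphi(X^{x_0,\eta}(s),t_0-s)$. Expanding at first order in $s\downarrow0$ gives $-\varphi_t+f(x_0,\eta)\cdot D\varphi\ge0$. Taking the supremum over $\eta$ yields $\varphi_t+\Hhat(x_0,D\varphi)\le0$, the subsolution inequality.

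For the supersolution inequality, take $\varphi$ touching from below. For each $s$, choose an $s^2$-optimal control $\eta^s$. The principle then gives
\[
  0\ge\frac1s\int_0^s\Bigl(-\varphi_t+f\bigl(X(\tau),\eta^s(\tau)\bigr)\cdot D\varphi\Bigr)\,d\tau-s.
\]
By the uniform Lipschitz bounds, the integrand equals $-\varphi_t(x_0,t_0)+f(x_0,\eta^s(\tau))\cdot D\varphi(x_0,t_0)+o(1)$. It is therefore at least $-\varphi_t-\Hhat(x_0,D\varphi)+o(1)$, which gives $\varphi_t+\Hhat\ge0$.

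The main obstacle is this supersolution step, because the near-optimal controls are merely measurable and need not converge. The fix is to bound $f(x_0,\eta)\cdot D\varphi$ pointwise in $\eta$ by $\Hhat$ before averaging, so that no compactness in the controls is required. Once both inequalities hold, uniqueness in the periodic planar class gives $W=\widehat G^p$, which is \eqref{eq:value-representation}.
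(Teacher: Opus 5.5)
Your proposal is correct and follows the same route as the paper's proof in Appendix~\ref{app:control}: the dynamic programming principle from concatenation of controls, the subsolution inequality from constant controls, the supersolution inequality from $s^2$-optimal controls, translation covariance of trajectories to place $W$ in the planar periodic class, and periodic uniqueness to identify $W$ with $\widehat G^p$. The only difference is the supersolution step. The paper averages the near-optimal controls to $\bar\alpha_h\in U$ and uses that $f$ is affine in $\eta$, together with compactness of $U$, to extract a limit control $a_*$. Your pointwise bound $f(x_0,\eta)\cdot D\varphi(x_0,t_0)\ge-\Hhat(x_0,D\varphi(x_0,t_0))$, applied before integrating, needs neither affinity nor compactness and is the more general standard argument.
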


\begin{proof}
Appendix~\ref{app:control} proves the formula directly on the lifted
space $\mathbb R^2$ for the more general initial datum
$g(x)=p\cdot x+u_0(x)$, where $u_0$ is continuous and periodic. The
argument does not apply a bounded data representation theorem to the
unbounded function $g$: bounded controlled speed gives finite terminal
values, while translation covariance of the trajectories shows that the
value function minus $p\cdot x$ is periodic. Periodic viscosity
uniqueness then identifies the value function with the solution in the
planar periodic class. Taking $u_0=0$ gives
\eqref{eq:value-representation}.
\end{proof}

The reversed drift in \eqref{eq:majorant-dynamics} is essential.  It
reflects the backward characteristic flow used to evaluate the solution
at a fixed spatial point.

\subsection{A General Comparison Channel Criterion}

\begin{proposition}[Comparison Channel Criterion]
\label{prop:channel-criterion}
Let $V:\mathbb R^2\to\mathbb R^2$ and
$b_1,b_2:\mathbb R^2\to[0,\infty)$ be bounded, Lipschitz, and periodic,
and define
\[
  H^{\Box}(x,q)
  =V(x)\cdot q+b_1(x)|q_1|+b_2(x)|q_2|.
\]
Let $H$ be a continuous periodic Hamiltonian satisfying
$H\le H^{\Box}$ and the comparison principle in the planar periodic
class.  Fix
\[
  \mathcal D=[x_0-r,x_0+r]\times\{y_0\}.
\]
Assume that, for every $|z|\le r$,
\begin{equation}\label{eq:channel-normal}
  V_2(x_0+z,y_0)=0,
  \qquad
  b_2(x_0+z,y_0)=0,
\end{equation}
and that, for some $\kappa>0$,
\begin{equation}\label{eq:channel-endpoints}
\begin{aligned}
  V_1(x_0+r,y_0)-b_1(x_0+r,y_0)&\ge\kappa,\\
  V_1(x_0-r,y_0)+b_1(x_0-r,y_0)&\le-\kappa.
\end{aligned}
\end{equation}
If $G_H^p$ denotes the solution with Hamiltonian $H$ and initial datum
$p\cdot x$, then
\[
  G_H^p(x,t)
  \ge\min_{y\in\mathcal D}p\cdot y,
  \qquad
  x\in\mathcal D,
  \quad t\ge0.
\]
\end{proposition}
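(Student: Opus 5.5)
The plan is to reduce everything to the rectangular comparator $H^{\Box}$ and then prove invariance of $\mathcal D$ for the reversed control dynamics.

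\emph{Reduction to $H^{\Box}$.} Since $H\le H^{\Box}$ and both satisfy comparison, \Cref{lem:comparison} gives $G^p_{H^{\Box}}\le G^p_H$. It therefore suffices to prove the bound for $G^p_{H^{\Box}}$. The Hamiltonian $H^{\Box}$ has the same structure as $\Hhat$: it is convex, positively homogeneous, and Lipschitz. So I would invoke the control representation of \Cref{prop:control-representation}, whose proof in Appendix~\ref{app:control} uses only boundedness, Lipschitz continuity and periodicity of the coefficients. This gives
\[
  G^p_{H^{\Box}}(x,t)=\inf_{\eta\in\mathcal U}p\cdot X^{x,\eta}(t),
  \qquad
  \dot X=-V(X)+b_1(X)\eta_1e_1+b_2(X)\eta_2e_2 .
\]
The claim then follows once I show that every trajectory started in $\mathcal D$ stays in $\mathcal D$ for all $t\ge0$. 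In that case $p\cdot X(t)\ge\min_{\mathcal D}p\cdot y$ for every control.

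\emph{Vertical invariance.} For a fixed control $\eta$, consider the one-dimensional Carath\'eodory ODE on the line $\{y=y_0\}$:
\[
  \dot\xi=-V_1(\xi,y_0)+b_1(\xi,y_0)\eta_1(t).
\]
As long as $\xi$ stays in $[x_0-r,x_0+r]$, the curve $(\xi(t),y_0)$ solves the full system, because \eqref{eq:channel-normal} makes the second component of the vector field vanish there. Uniqueness of Carath\'eodory solutions, which holds since the coefficients are Lipschitz in the state, then identifies $X^{x,\eta}$ with this curve. So the real task is to show that $\xi$ never leaves the interval.

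\emph{Horizontal invariance (the main obstacle).} At the right endpoint, \eqref{eq:channel-endpoints} gives $-V_1+b_1\eta_1\le -V_1+b_1\le-\kappa$ for every $|\eta_1|\le1$. At the left endpoint it similarly gives $\dot\xi\ge\kappa$. By Lipschitz continuity these strict inequalities persist, with $\kappa/2$, on a small neighbourhood $|z-r|<\delta$ of each endpoint, uniformly in the control. The difficulty is that $\eta$ is only measurable, so the argument should be an integral one rather than a derivative test at a point.

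Suppose $\xi$ first exceeds $x_0+r$ at some time. Let $t_1$ be the last time before that with $\xi(t_1)=x_0+r-\delta'$, for a small $\delta'<\delta$. On the interval between $t_1$ and the exit time, $\xi$ stays in the $\delta$-neighbourhood of the endpoint, where $\dot\xi\le-\kappa/2$ almost everywhere. Hence $\xi$ is nonincreasing there, which contradicts its increase from $x_0+r-\delta'$ to a value beyond $x_0+r$. The left endpoint is handled symmetrically. To make the vertical step rigorous, I would work with the truncated (stopped) system, so that the identification with the full trajectory and the invariance argument are carried out together on the maximal interval where $\xi$ lies in $[x_0-r,x_0+r]$. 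This interval is then closed, open in $[0,\infty)$, and nonempty, hence equal to $[0,\infty)$.
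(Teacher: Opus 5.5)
Your proposal is correct and follows essentially the same route as the paper. You use $H\le H^{\Box}$ to get $G^p_{H^{\Box}}\le G^p_H$, represent $G^p_{H^{\Box}}$ by the reversed control problem, and then prove invariance of $\mathcal D$ via the restricted scalar ODE on $\{y=y_0\}$, uniqueness, and a first-exit argument near the endpoints. The paper instead cites \Cref{lem:segment-invariance-background} for that invariance step rather than reproving it inline; in your exit argument, if $\xi(0)$ already lies within $\delta'$ of the endpoint, simply take $t_1=0$.
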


\begin{proof}
The general support function representation in
Appendix~\ref{app:control-background} gives
\[
  G_{H^{\Box}}^p(x,t)
  =\inf_{\eta\in\mathcal U}p\cdot X^{x,\eta}(t),
\]
where
\[
  \dot X=-V(X)+b_1(X)\eta_1e_1+b_2(X)\eta_2e_2,
  \qquad |\eta_i|\le1.
\]
Condition \eqref{eq:channel-normal} makes the vertical component vanish
on $\mathcal D$.  At the right endpoint, the horizontal component is at
most $-\kappa$, and at the left endpoint it is at least $\kappa$, for
every control.  Hence \Cref{lem:segment-invariance-background} implies
that every trajectory starting in $\mathcal D$ remains there.  The
control representation gives
\[
  G_{H^{\Box}}^p(x,t)
  \ge\min_{y\in\mathcal D}p\cdot y.
\]
Finally, $H\le H^{\Box}$ implies
$G_{H^{\Box}}^p\le G_H^p$ by
\Cref{lem:H-order-background}.
\end{proof}

\section{The Cellular Comparison Channel}\label{sec:trap}

Let
  $\mu=Ad$, 
  $r_\mu=\arccos\frac1\mu$, and   
  $\mathcal D_\mu=[\pi-r_\mu,\pi+r_\mu]\times\{0\}$. 
The lower bound in \eqref{eq:parameter-window} implies $\mu>1$, so
$r_\mu\in(0,\pi/2)$.  At a point $(\pi+z,0)$ with
$|z|\le r_\mu$,
\[
  \Phi(\pi+z,0)=-\cos z,
\]
and therefore
\[
  a_2(\pi+z,0)=(1-\mu\cos z)_+=0,
  \qquad
  a_1(\pi+z,0)=1+\mu\cos z.
\]
Moreover, $V_{A,2}(x_1,0)=0$.  Thus the normal conditions
\eqref{eq:channel-normal} hold throughout $\mathcal D_\mu$.

At the two endpoints,
\[
  a_1(\pi\pm r_\mu,0)=2,
  \qquad
  V_{A,1}(\pi+r_\mu,0)=A\sin r_\mu,
  \qquad
  V_{A,1}(\pi-r_\mu,0)=-A\sin r_\mu.
\]
Since
\[
  A\sin r_\mu-2
  =\frac{\sqrt{\mu^2-1}}d-2>0
\]
by \eqref{eq:parameter-window}, the endpoint conditions
\eqref{eq:channel-endpoints} hold with
\[
  \kappa=\frac{\sqrt{\mu^2-1}}d-2.
\]
Applying \Cref{prop:channel-criterion} with
$H^{\Box}=\Hhat$, $V=V_A$, $b_1=a_1$, $b_2=a_2$,
$x_0=\pi$, and $y_0=0$, first with $H=\Hhat$ and then with
$H=\Hcut$, gives the whole channel estimates.

\begin{corollary}[Uniform Lower Bound on the Comparison Channel]
\label{cor:stagnation}
For every $p=(p_1,p_2)\in\mathbb R^2$,
\[
  \widehat G^p(x,t)
  \ge\min_{y\in\mathcal D_\mu}p\cdot y
  =\pi p_1-r_\mu|p_1|,
  \qquad
  x\in\mathcal D_\mu,
  \quad t\ge0,
\]
and hence
\[
  G_+^p(x,t)
  \ge\pi p_1-r_\mu|p_1|,
  \qquad
  x\in\mathcal D_\mu,
  \quad t\ge0.
\]
\end{corollary}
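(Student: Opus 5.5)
The plan is to apply \Cref{prop:channel-criterion} twice, once with $H=\Hhat$ and once with $H=\Hcut$. In both cases the comparator is $H^{\Box}=\Hhat$, and the data are $V=V_A$, $b_1=a_1$, $b_2=a_2$, $x_0=\pi$, $y_0=0$, $r=r_\mu$. Then I compute the minimum of $p\cdot y$ over the segment $\mathcal D_\mu$. The hypotheses on the coefficients hold: $V_A$, $a_1=(1-\mu\Phi)_+$ and $a_2=(1+\mu\Phi)_+$ are bounded, Lipschitz, periodic and nonnegative, as recorded in the proof of \Cref{lem:hat-regularity}. The comparison principle in the planar periodic class holds for $\Hhat$ and $\Hcut$ by \Cref{lem:hat-regularity} and \Cref{lem:unc-cut-regularity}. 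The domination hypothesis is $\Hhat\le\Hhat$ in the first application, which is trivial, and $\Hcut\le\Hhat$ in the second, which is \Cref{lem:rectangular-domination} with $\tau=e_1$, $\nu=e_2$ and vanishing mixed strain.

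Next I verify the geometric conditions. On the segment, $\Phi(\pi+z,0)=\cos(\pi+z)=-\cos z$. For $|z|\le r_\mu<\pi/2$ we have $\mu\cos z\ge\mu\cos r_\mu=1$, so $a_2(\pi+z,0)=(1-\mu\cos z)_+=0$. Also $V_{A,2}(x_1,0)=A\cos x_1\sin 0=0$. This gives \eqref{eq:channel-normal}. At the endpoints $a_1=1+\mu\cos r_\mu=2$, and $V_{A,1}(\pi\pm r_\mu,0)=-A\sin(\pi\pm r_\mu)=\pm A\sin r_\mu$. The endpoint conditions \eqref{eq:channel-endpoints} therefore reduce to $A\sin r_\mu-2\ge\kappa>0$. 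Using $\sin r_\mu=\sqrt{\mu^2-1}/\mu$ and $A=\mu/d$, this becomes $\sqrt{\mu^2-1}>2d$, i.e.\ $\mu^2>1+4d^2$, which is the strict lower bound in \eqref{eq:parameter-window}. So I take $\kappa=\sqrt{\mu^2-1}/d-2$.

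The conclusion of \Cref{prop:channel-criterion} then gives $\widehat G^p\ge\min_{\mathcal D_\mu}p\cdot y$ and $G_+^p\ge\min_{\mathcal D_\mu}p\cdot y$ on $\mathcal D_\mu\times[0,\infty)$. The second bound can also be obtained from the first through $\widehat G^p\le G_+^p$ and \Cref{lem:comparison}. Finally, on the segment $y=(\pi+z,0)$ with $|z|\le r_\mu$, we have $p\cdot y=\pi p_1+zp_1$, whose minimum is $\pi p_1-r_\mu|p_1|$.

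No step is a genuine obstacle, since the analytic content is already in \Cref{prop:channel-criterion}, in particular the segment invariance and the control representation. The one point that needs care is the equality case $\mu\cos z=1$ at the endpoints: there $a_2$ vanishes only at the boundary value of the positive part, and I need it to vanish exactly rather than approximately. This holds because $\cos z\ge\cos r_\mu$ on the closed interval.
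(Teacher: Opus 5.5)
Your proposal is correct and follows the paper's own argument essentially step for step. The paper likewise verifies \eqref{eq:channel-normal} and \eqref{eq:channel-endpoints} on $\mathcal D_\mu$ with $\kappa=\sqrt{\mu^2-1}/d-2$, and then applies \Cref{prop:channel-criterion} with $H^{\Box}=\Hhat$, first for $H=\Hhat$ and then for $H=\Hcut$; your extra remarks on the exact vanishing of $a_2$ at the endpoints and on computing $\min_{\mathcal D_\mu}p\cdot y$ only make explicit what the paper leaves implicit.
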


\section{Completion of the Proof of \Cref{thm:main}}\label{completion}
We now complete the proofs of \Cref{thm:main} and
\Cref{cor:periodic-perturbations}.

\begin{proof}[Proof of \Cref{thm:main}]
The whole channel estimate \eqref{eq:slow-main} is
\Cref{cor:stagnation}, and the fast estimate \eqref{eq:fast-main} is
\Cref{prop:fast-cutoff}.  Applying
\Cref{prop:phase-separation-sandwich} with
\[
  H_{\mathrm{lo}}=\Hunc,
  \qquad H=\Hcut,
  \qquad H_{\mathrm{hi}}=\Hhat,
\]
$x_{\mathrm s}=(\pi,0)$,
$B=\pi p_1-r_\mu|p_1|$, and $c=CA/\log A$ gives
\eqref{eq:oscillation-main}.  Indeed, the slow hypothesis for
$\widehat G^p$ is contained in \Cref{cor:stagnation}, and the fast
hypothesis is \Cref{thm:xinyu}.  Since a phase-independent effective
burning velocity would force
$\osc u^p(\cdot,t)/t\to0$, none exists.  The argument applies to
every unit direction $p$.
\end{proof}

\begin{proof}[Proof of \Cref{cor:periodic-perturbations}]
Since
\[
  p\cdot x+m\le p\cdot x+u_0(x)\le p\cdot x+M,
\]
order preservation and invariance under addition of constants for the
solution semigroup give \eqref{eq:periodic-data-sandwich}.  The slow
and fast estimates follow immediately.  At the two distinguished
phases,
\[
  \bigl(G_{+,u_0}^p(x_{\mathrm s},t)-p\cdot x_{\mathrm s}\bigr)
  -\bigl(G_{+,u_0}^p(x_{\mathrm f},t)-p\cdot x_{\mathrm f}\bigr)
\]
is bounded below by the corresponding planar-data difference plus
$m-M$.  Dividing by $t$ therefore gives the same lower bound in
\eqref{eq:oscillation-main}.
\end{proof}

\begin{remark}[Comparison Channel versus Outward Barrier Certificate]
\label{rem:not-barrier}
The segment $\mathcal D_\mu$ is invariant only for the reversed
controlled dynamics of the majorant $\Hhat$.  It is not an exact
outward barrier certificate for the physical Hamiltonian.  Indeed, at
the right endpoint $x=(\pi+r_\mu,0)$, whose outward normal is $e_1$,
\[
  \Hcut(x,e_1)=2+A\sin r_\mu>0.
\]
Thus the slow estimate is generated by an upper comparison channel, not
by an exact outward barrier certificate for the physical Hamiltonian.
The global rigidity statement in \Cref{thm:no barrier} makes this
distinction quantitative.
\end{remark}

\section{Rigidity of Outward Barrier Certificates}
\label{sec:no barrier}

We now prove \Cref{thm:no barrier} and
\Cref{cor:barriers-dense}.  Unlike the cellular counterexample, this
result holds for every $C^2$ incompressible periodic flow, in every
dimension.  It shows that a closed set satisfying the outward
Hamiltonian inequality in every proximal normal direction can be proper
only if it is quantitatively dense in the torus.

\begin{proof}[Proof of \Cref{thm:no barrier}]
If $D=\T_L^n$, the conclusion is immediate. We therefore assume that
$D$ is a proper subset of $\T_L^n$. Let
  $C_2\coloneqq 1/2\|D^2V\|_\infty$.
Choose $r_0>0$ such that
\[
  r_0<\min\left\{d,\frac L2\right\},
  \qquad
  C_2r_0\le\frac1{2d},
\]
with the second restriction omitted if $C_2=0$.  Choose $0<\e_0<r_0/(2d)$.
Take $y\notin D$ with
  $r\coloneqq \dist(y,D)<r_0$.
Let $x\in D$ be a nearest point and write, in compatible flat
coordinates,
\[
  y=x+r\nu,
  \qquad |\nu|=1.
\]
Then $\nu$ is a proximal outward normal. Set
\[
  s\coloneqq
  \nu^{\mathsf T}S_V(x)\nu
  =
  \nu^{\mathsf T}DV(x)\nu.
\]
The barrier condition gives
\[
  V(x)\cdot \nu
  \le\e-(1+ds)_+.
\]
Taylor's theorem yields
\begin{align*}
  V(y)\cdot \nu
  \le V(x)\cdot \nu+rs+C_2r^2\le\e-(1+ds)_++rs+C_2r^2.
\end{align*}
The elementary cutoff inequality
\begin{equation*}
  -(1+ds)_++rs\le-\frac rd,
  \qquad 0<r<d,
\end{equation*}
holds for every $s\in\R$.  Indeed, if $s\le-1/d$, the left side is $rs\le-r/d$; if $s>-1/d$, it equals $-1-(d-r)s$, whose supremum is $-r/d$.  Therefore,
\begin{equation}\label{eq:drift-app}
  V(y)\cdot \nu
  \le\e-\frac r{2d}.
\end{equation}

Let $\Phi_t$ be the flow generated by $V$, and set 
$\rho(t)\coloneqq\dist(\Phi_t(y),D)$. 
Whenever $0<\rho(t)<r_0$,
\Cref{lem:distance-Dini-background} and \eqref{eq:drift-app},
applied at the current point $\Phi_t(y)$, give
\[
  \Dplus\rho(t)
  \le\varepsilon-\frac{\rho(t)}{2d}.
\]
Let $I$ be a connected component of
$\{t\ge0:\rho(t)>0\}$. If $0\in I$, write $I=[0,\beta)$;
otherwise write $I=(\alpha,\beta)$, where $\alpha>0$ and continuity
gives $\rho(\alpha)=0$. On any portion of $I$ on which
$\rho<r_0$, scalar comparison for the upper Dini inequality gives
\[
  \rho(t)
  \le
  2d\varepsilon+
  \bigl(\rho(0)-2d\varepsilon\bigr)e^{-t/(2d)},
  \qquad 0\in I,
\]
and
\[
  \rho(t)
  \le
  2d\varepsilon
  \bigl(1-e^{-(t-\alpha)/(2d)}\bigr)
  \le2d\varepsilon,
  \qquad 0\notin I.
\]
In the first case the right hand side is a convex combination of
$\rho(0)$ and $2d\varepsilon$; in the second it is at most
$2d\varepsilon$. Since both $\rho(0)$ and $2d\varepsilon$ are
strictly smaller than $r_0$, a first exit argument extends the
corresponding estimate throughout $I$. Using continuity at the finite
endpoints of the components, and observing that $\rho=0$ outside their
union, we obtain
\begin{equation}\label{eq:rho-app}
  \rho(t)
  \le
  2d\varepsilon+
  \bigl(\rho(0)-2d\varepsilon\bigr)_+e^{-t/(2d)},
  \qquad t\ge0.
\end{equation}
The same estimate also holds for trajectories starting at a point
$y\in D$. In that case no connected component of
$\{t\ge0:\rho(t)>0\}$ contains $0$, and the second comparison above
gives
  $\rho(t)\le 2d\varepsilon$, 
  for all $t\ge0$.
Thus \eqref{eq:rho-app} is valid for every initial point satisfying
$\rho(0)<r_0$.
Suppose that $R(D)>2d\varepsilon$. Choose
\[
  2d\varepsilon<r<\min\{R(D),r_0\}
\]
and write
  $D_s\coloneqq\{y\in\T_L^n:\dist(y,D)\le s\}$.
Equation \eqref{eq:rho-app} gives
\[
  \Phi_t(D_r)\subset D_{r_t},
  \qquad
  r_t=2d\varepsilon+
      (r-2d\varepsilon)e^{-t/(2d)}.
\]
Since $V$ is divergence free, its flow preserves Lebesgue measure.
Therefore
\[
  |D_r|
  =
  |\Phi_t(D_r)|
  \le
  |D_{r_t}|.
\]
As $r_t\downarrow2d\varepsilon$, continuity from above of Lebesgue
measure gives
\[
  |D_r|\le|D_{2d\varepsilon}|.
\]
But $0\le2d\varepsilon<r<R(D)$, and the final observation in
Appendix~\ref{app:proximal-background} shows that
\[
  \{y:2d\varepsilon<\dist(y,D)<r\}
\]
is a nonempty open set and hence has positive measure. Thus
  $|D_{2d\varepsilon}|<|D_r|$,
a contradiction. Therefore $R(D)\le2d\varepsilon$.
\end{proof}

\begin{proof}[Proof of Corollary \ref{cor:barriers-dense}]
Apply \Cref{thm:no barrier} for all sufficiently large $j$.
\end{proof}

\section{Consequences and Interpretation}\label{sec:consequences}
Having completed the proof of the main theorems, we now record several
consequences of \Cref{thm:main} and discus what they reveal about
the underlying homogenization mechanism. 

\subsection{Failure of the small scale Homogenization Limit}

For the coefficients in \eqref{eq:scaled-physical}, one has
\[
  S_{V_A^\varepsilon}(x)
  =\frac1\varepsilon S_{V_A}(x/\varepsilon),
  \qquad
  d_\varepsilon S_{V_A^\varepsilon}(x)
  =dS_{V_A}(x/\varepsilon).
\]
A direct change of variables and uniqueness give the exact scaling
relation
\begin{equation*}
  G^{\varepsilon,p}(x,t)
  =\varepsilon
  G_+^p\left(\frac{x}{\varepsilon},
             \frac{t}{\varepsilon}\right).
\end{equation*}
Fix $t_0>0$ and put $s=t_0/\varepsilon$.  By
\eqref{eq:slow-main}, with
$B_p=\pi p_1-r_\mu|p_1|$,
\begin{align*}
G^{\varepsilon,p}(\varepsilon x_{\mathrm s},t_0)
-G^{\varepsilon,p}(\varepsilon x_{\mathrm f},t_0)=\varepsilon
  \bigl[G_+^p(x_{\mathrm s},s)-G_+^p(x_{\mathrm f},s)\bigr]
  \ge
  t_0\left[
    \frac{B_p}{s}-\frac{G_+^p(x_{\mathrm f},s)}s
  \right].
\end{align*}
As $\varepsilon\downarrow0$, one has $s\to\infty$, and
\eqref{eq:fast-main} yields
\eqref{eq:scaled-gap-main}.  Since
\[
  |\varepsilon x_{\mathrm s}-\varepsilon x_{\mathrm f}|
  =\frac\pi2\varepsilon
  \longrightarrow0,
\]
a locally uniformly convergent subsequence would converge to a
continuous limit and force the difference in
\eqref{eq:scaled-gap-main} to tend to zero.  This proves
\Cref{cor:homogenization-fails}.

A fixed linear rescaling of space sends the same construction from the
$2\pi$-periodic torus to the unit torus; this normalization does not
affect the argument.

\subsection{A fixed pair Linear Gap}

Fix a unit vector $p\in\mathbb R^2$ and set
$u^p(x,t)=G_+^p(x,t)-p\cdot x$.  At the two fixed phases,
\[
  u^p(x_{\mathrm s},t)\ge-r_\mu|p_1|,
  \qquad
  u^p(x_{\mathrm f},t)
  =G_+^p(x_{\mathrm f},t)-\frac\pi2p_1.
\]
Consequently,
\begin{equation*}
  \liminf_{t\to\infty}
  \frac{u^p(x_{\mathrm s},t)-u^p(x_{\mathrm f},t)}t
  \ge C\frac A{\log A}.
\end{equation*}
Thus the same two physical points witness the full linear phase gap for
every propagation direction.  In particular, if
$0<c<CA/\log A$, there exists $T<\infty$ such that
\[
  u^p(x_{\mathrm s},t)-u^p(x_{\mathrm f},t)\ge ct,
  \qquad t\ge T.
\]
Choosing any $\tau\ge T$ and setting $\gamma=c\tau$ gives
\begin{equation*}
  u^p(x_{\mathrm s},k\tau)-u^p(x_{\mathrm f},k\tau)
  \ge k\gamma,
  \qquad k\ge1.
\end{equation*}

\subsection{What the Counterexample Teaches}
Three distinctions are worth retaining.

\textit{Cutoff versus Coercivity.}
The positive part operator $(\cdot)_+$ enforces physical nonnegativity of the local burning
speed, but it does not restore coercivity or rule out trapping in the
auxiliary control dynamics. It prevents reversal of the flame's 
normal burning motion without forcing homogenization.

\textit{Comparison Trapping versus Outward Barrier Certificates.}
The bounded phase is generated by the invariant channel for the
reversed control dynamics of $\Hhat$.  By contrast,
\Cref{thm:no barrier} shows that every exact outward barrier certificate
for $\Hcut$ fills the torus and that approximate certificates become
quantitatively dense.  Thus the comparison Hamiltonian can trap all
relevant backward trajectories even though no proper exact certificate
of this type exists for the original physical equation.  This is why
certificate rigidity is compatible with nonhomogenization.

\textit{Time Consistency versus Ergodicity.}
The credal construction admits a dynamically rectangular control
representation and a valid dynamic programming principle, but this does
not force the long run value to become independent of the initial
state. Hence, time consistency alone is 
not enough to produce a single state-independent asymptotic rate.

\section{Conclusion and Further Questions}\label{sec:questions}

We have disproved the expectation associated with Question~11 of Xin,
Yu, and Ronney \cite{XinYuRonney2024}.  For the standard smooth two dimensional incompressible
cellular flow and the explicit parameter regime
\[
  0<d<\frac{20}{399},
  \qquad
  \sqrt{1+4d^2}<Ad\le1+\frac d{10},
\]
the physical positive part strain $G$-equation retains microscopic
phase information at a linear rate for every unit planar direction.
The solution is bounded below on the whole comparison channel
$\mathcal D_\mu$, while the fixed phase
$x_{\mathrm f}=(\pi/2,0)$ propagates at rate at least
$CA/\log A$.  The same phenomenon persists for arbitrary continuous
periodic perturbations of planar initial data.

The result also gives a direct failure of the small scale homogenization
limit.  Under the physical scaling
$V_A^\varepsilon(x)=V_A(x/\varepsilon)$ and
$d_\varepsilon=\varepsilon d$, an order one value gap persists between
points at distance $\pi\varepsilon/2$ at every positive macroscopic
time.  Thus the oscillatory solutions themselves have no locally
uniformly convergent subsequence; the obstruction is stronger than the
failure of a cell problem constant.

The proof combines two Hamiltonian comparisons in opposite directions.
The uncut equation transfers the Xin-Yu fast point estimate, while a
cutoff-adapted support function majorant has zero transverse drift and
zero transverse control coefficient on a horizontal channel, with all
admissible endpoint velocities pointing inward.  The general comparison
channel criterion separates this mechanism from the particular
trigonometric form of the cellular flow.  The same majorant is exactly
the upper expectation generated by a credal set supported on a
rectangular velocity set, linking the construction to imprecise
probability and robust dynamic programming.

Our second theorem gives an independent rigidity principle.  For any
$C^2$ incompressible periodic flow, an $\varepsilon$-outward barrier
certificate has covering radius at most $2d\varepsilon$ for all
sufficiently small $\varepsilon$; in particular, every exact
certificate fills the torus.  Hence the comparison trapping responsible
for phase dependence cannot be identified with an exact certificate of
this type for the physical Hamiltonian.

Together, these results shift the natural agenda from universal
existence to classification: which geometric, dynamical, or
communication structures restore a phase-independent effective rate?
Several questions remain open.

\begin{enumerate}[label=\textbf{Q\arabic*.}]
\item For which periodic incompressible flows and parameter ranges does
the physical strain equation homogenize?

\item Is the nonhomogenizing parameter window stable under $C^2$
perturbations of the cellular flow?

\item Can one characterize directly from the physical Hamiltonian when
an ordered support function majorant admits a comparison channel of the
type used here?

\item Does a suitable spatially averaged, rather than pointwise,
effective burning velocity exist for the present counterexample?

\item Which global communication or ergodicity conditions on the
associated credal control model are sufficient to guarantee a
state-independent long run robust value?

\item Can numerical schemes recover the two phase-dependent propagation
rates without introducing enough artificial diffusion to eliminate the
comparison-channel mechanism?
\end{enumerate}

\section*{Acknowledgments}
We gratefully acknowledge support from the Prob\_AI Hub and the London Mathematical Society. 
ChatGPT 5.6 Sol was used to assist with proofreading, the production of
Figure~\ref{fig:strain-G-geometry}, the conditional Lean formalizations,
and improvements to the clarity and presentation of the manuscript.

\appendix

\section{Background, Conventions, and Standard Tools}\label{app:background}

This appendix fixes the conventions and records the standard facts used in the main argument. Its purpose is not to survey viscosity solutions, optimal control, or imprecise probability, but to make the paper readable without requiring the reader to reconstruct sign conventions or translate between the different languages used in the proof. The viscosity solution results below are classical \cite{BardiCapuzzoDolcetta1997,CrandallIshiiLions1992}. The credal set terminology follows \cite{AugustinEtAl2014,TroffaesDeCooman2014,Walley1991}.

\subsection{Flat Tori, Lifts, and Planar Data}\label{app:periodic}

For $L>0$, write
\[
  \T_L^n\coloneqq \R^n/L\mathbb Z^n.
\]
We identify a function on $\T_L^n$ with its $L\mathbb Z^n$-periodic lift to $\R^n$. The torus used in the main construction is $\T_{2\pi}^2$; the unit torus is $\T^n=\T_1^n$. The quotient distance is denoted by $\dist$, and the injectivity radius of $\T_L^n$ is $L/2$.

Let $H(x,q)$ be $L\mathbb Z^n$-periodic in $x$. A solution with planar slope $p\in\R^n$ is a function satisfying
\[
  G(x+Lm,t)=G(x,t)+L p\cdot m,
  \qquad m\in\mathbb Z^n.
\]
Equivalently,
\[
  G(x,t)=p\cdot x+u(x,t),
\]
where $u(\cdot,t)$ is $L\mathbb Z^n$-periodic. The equation
\[
  G_t+H(x,DG)=0,
  \qquad G(x,0)=p\cdot x+u_0(x),
\]
is then equivalent to the periodic problem
\begin{equation}\label{eq:periodic-correction-app}
  u_t+H(x,p+Du)=0,
  \qquad u(x,0)=u_0(x)
  \quad\text{on }\T_L^n.
\end{equation}
This reduction is what allows all comparison arguments in the paper to be performed on a compact state space.

For the strain Hamiltonians, the expression involving $q/|q|$ is defined only when $q\ne0$. We set $H(x,0)=0$; since the directional factor is bounded and is multiplied by $|q|$, this gives the unique continuous positively homogeneous extension at the origin.
For a bounded function $f$ on a compact set, we use
\[
  \osc f\coloneqq \sup f-\inf f,
  \qquad
  \|f\|_\infty\coloneqq \sup|f|.
\]

\subsection{Viscosity Solutions, Comparison, and Hamiltonian Order}\label{app:viscosity}

Let $Q=\T_L^n\times(0,T)$. An upper semicontinuous function $u$ is a viscosity subsolution of
\[
  u_t+H(x,p+Du)=0
\]
if, whenever $\varphi\in C^1(Q)$ and $u-\varphi$ has a local maximum at $(x_0,t_0)\in Q$, one has
\[
  \varphi_t(x_0,t_0)
  +H\bigl(x_0,p+D\varphi(x_0,t_0)\bigr)\le0.
\]
A lower semicontinuous function is a viscosity supersolution when the reverse inequality holds at local minima. A continuous function is a viscosity solution if it is both a subsolution and a supersolution.

The following standard result is the only general viscosity theorem used in the main proof.

\begin{theorem}[Periodic Comparison and Semigroup Properties]\label{thm:comparison-background}
Suppose that $H:\T_L^n\times\R^n\to\R$ is continuous, globally Lipschitz in $q$ uniformly in $x$, and satisfies the standard spatial modulus condition
\[
  |H(x,q)-H(y,q)|
  \le \omega\bigl((1+|q|)\dist(x,y)\bigr)
\]
for some modulus of continuity $\omega$. Then, for every $p\in\R^n$ and $f\in C(\T_L^n)$, the periodic problem \eqref{eq:periodic-correction-app} has a unique continuous viscosity solution. Writing $S_t^{H,p}f$ for its value at time $t$, one has
\begin{align*}
  S_0^{H,p}&=\operatorname{Id},
  &S_{t+s}^{H,p}&=S_t^{H,p} \circ S_s^{H,p},\\
  f\le g\implies S_t^{H,p}f&\le S_t^{H,p}g,
  &S_t^{H,p}(f+c)&=S_t^{H,p}f+c,\\
  \|S_t^{H,p}f-S_t^{H,p}g\|_\infty
  &\le\|f-g\|_\infty.
\end{align*}
No coercivity or convexity of $H$ is required for these conclusions. 
\end{theorem}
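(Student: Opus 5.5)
The plan is to follow the classical theory for first order equations on compact state spaces, as developed in Crandall--Ishii--Lions and Bardi--Capuzzo-Dolcetta. The central step is a comparison principle on $\T_L^n\times[0,T]$ for the periodic problem \eqref{eq:periodic-correction-app}. I will prove it by doubling variables. Let $u$ be a bounded upper semicontinuous subsolution and $v$ a bounded lower semicontinuous supersolution with $u(\cdot,0)\le v(\cdot,0)$. Suppose, for contradiction, that $\sup(u-v)>0$. Penalize in time by $\sigma t$ and maximize
\[
  u(x,t)-v(y,s)-\frac{\dist(x,y)^2}{2\alpha}-\frac{(t-s)^2}{2\alpha}-\sigma t .
\]
Compactness of the torus gives a maximizer. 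For $\alpha$ below the squared injectivity radius, the squared distance can be computed in flat lifted coordinates, where it is smooth.

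The standard estimates give $\dist(x_\alpha,y_\alpha)^2/\alpha\to0$. Writing the two viscosity inequalities with the common gradient $q_\alpha=(x_\alpha-y_\alpha)/\alpha$ and subtracting yields
\[
  \sigma\le H(y_\alpha,p+q_\alpha)-H(x_\alpha,p+q_\alpha)
  \le\omega\bigl((1+|p|+|q_\alpha|)\dist(x_\alpha,y_\alpha)\bigr).
\]
Since $|q_\alpha|\dist(x_\alpha,y_\alpha)=\dist(x_\alpha,y_\alpha)^2/\alpha\to0$, the right hand side tends to zero. This contradicts $\sigma>0$. I also have to exclude the maximum sitting at $t=0$, which follows from the ordered initial data and upper semicontinuity.

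For existence I will use Perron's method. The barriers are $f\pm Kt$ with $K=\sup_x|H(x,p)|$, valid for smooth $f$; they are sub- and supersolutions because the gradient of $f$ is bounded and $H$ is Lipschitz in $q$. Comparison then shows that the Perron envelope is continuous and is the unique solution.

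For merely continuous $f$, I approximate uniformly by smooth $f_k$. The contraction estimate below shows that the corresponding solutions form a Cauchy sequence in sup norm, and stability of viscosity solutions under uniform limits gives a solution for $f$. Uniqueness again comes from comparison.

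The semigroup properties then follow quickly:
\begin{itemize}
\item \emph{Monotonicity} is comparison itself.
\item \emph{Constant invariance} holds because $u+c$ solves the same equation, since $H$ does not depend on $u$.
\item \emph{The $L^\infty$ contraction} follows from monotonicity together with $g-\|f-g\|_\infty\le f\le g+\|f-g\|_\infty$.
\item \emph{The semigroup law} follows from uniqueness applied to the time-shifted solution.
\end{itemize}
Nowhere is coercivity or convexity used. Compactness of the torus replaces the coercivity that is normally used to localize maxima, and the global Lipschitz bound in $q$ supplies the needed barriers.

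I expect the main obstacle to be the doubling step. Without coercivity, $|q_\alpha|$ has no a priori bound. This is exactly why the modulus hypothesis is stated with the factor $(1+|q|)\dist(x,y)$: it is what allows $|q_\alpha|\dist(x_\alpha,y_\alpha)\to0$ to close the argument. Some care is also needed with torus geometry, namely choosing nearest lifts so that $D_x\dist^2/2=-D_y\dist^2/2$ holds exactly.
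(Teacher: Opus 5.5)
Your proposal is correct and follows the same route the paper indicates: comparison by doubling variables, existence by Perron's method, and the semigroup, order, constant-shift and contraction properties read off from comparison and uniqueness. One small slip: for a $C^1$ datum $f$ the barrier constant must be $K=\sup_x|H(x,p+Df(x))|$ (for instance $\sup_x|H(x,p)|$ plus the Lipschitz constant of $H$ in $q$ times $\|Df\|_\infty$), not $\sup_x|H(x,p)|$, since with the latter $f\pm Kt$ are in general not sub- and supersolutions when $f$ is nonconstant.
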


The hypotheses of \Cref{thm:comparison-background} are verified for
$\Hunc$ and $\Hcut$ in Lemma \ref{lem:unc-cut-regularity}, and for $\Hhat$
in Lemma \ref{lem:hat-regularity}. The proof of \Cref{thm:comparison-background} follows the standard
viscosity solution argument: comparison is obtained by doubling the
variables, and existence then follows by Perron's method 
\cite{CrandallIshiiLions1992}. The contraction and semigroup properties follow from comparison and uniqueness.

A consequence used repeatedly in the paper is the reversal of order between Hamiltonians and their solution operators.

\begin{lemma}[Ordering Hamiltonians]\label{lem:H-order-background}
Let $H_1,H_2$ satisfy the assumptions of \Cref{thm:comparison-background} and suppose
\[
  H_1(x,q)\le H_2(x,q),
  \qquad\text{for every }(x,q).
\]
For the same planar slope and the same initial datum,
\[
  S_t^{H_2,p}f\le S_t^{H_1,p}f,
  \qquad t\ge0.
\]
\end{lemma}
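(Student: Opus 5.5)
The plan is to show that $S_t^{H_2,p}f$ is a viscosity subsolution of the $H_1$-problem and then to invoke the comparison principle contained in \Cref{thm:comparison-background}. Set $u_2(x,t)\coloneqq S_t^{H_2,p}f(x)$. By \Cref{thm:comparison-background}, applied with $H_2$, this is the unique continuous periodic viscosity solution of
\[
  u_t+H_2(x,p+Du)=0,\qquad u(x,0)=f(x),
\]
on $\T_L^n\times(0,T)$ for every $T>0$.

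\emph{Subsolution step.} Let $\varphi\in C^1(Q)$ be a test function such that $u_2-\varphi$ has a local maximum at $(x_0,t_0)\in Q$. The subsolution property of $u_2$ for the $H_2$ equation gives
\[
  \varphi_t(x_0,t_0)+H_2\bigl(x_0,p+D\varphi(x_0,t_0)\bigr)\le0.
\]
The pointwise inequality $H_1\le H_2$ then yields
\[
  \varphi_t(x_0,t_0)+H_1\bigl(x_0,p+D\varphi(x_0,t_0)\bigr)\le0.
\]
Hence $u_2$ is a continuous, in particular upper semicontinuous, periodic viscosity subsolution of the $H_1$ problem with initial datum $f$. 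No argument is needed at local minima.

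\emph{Comparison step.} Let $u_1(x,t)=S_t^{H_1,p}f(x)$, the solution of the $H_1$ problem with the same datum. The comparison principle for $H_1$ underlies the uniqueness and order preservation in \Cref{thm:comparison-background}: a continuous subsolution lies below a continuous supersolution whenever the initial data are ordered. Applying it on $\T_L^n\times[0,T]$ to the subsolution $u_2$ and the supersolution $u_1$, which share the initial datum $f$, gives $u_2\le u_1$ on $[0,T]$. Since $T$ is arbitrary, this holds for all $t\ge0$.

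The only delicate point is that \Cref{thm:comparison-background} is phrased in terms of solutions rather than sub/supersolutions. If needed, I would recall that its proof by doubling of variables establishes comparison for a USC subsolution against an LSC supersolution on the compact torus. This uses only continuity, Lipschitz dependence on $q$, and the spatial modulus condition of $H_1$, so the Hamiltonian $H_2$ plays no further role.
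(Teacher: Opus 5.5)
Your proposal is correct and follows the paper's proof exactly: $S_t^{H_2,p}f$ is a viscosity subsolution of the $H_1$ problem, since $H_1\le H_2$ can only lower the left-hand side at points of local maximum, and comparison for $H_1$ with the common datum $f$ then gives the ordering. Your remark that this step uses the sub/supersolution form of the doubling-of-variables comparison, rather than the solution-level statement of \Cref{thm:comparison-background}, is a fair precision that the paper leaves implicit.
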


\begin{proof}
The $H_2$ solution is a viscosity subsolution of the $H_1$ equation, because replacing $H_2$ by the smaller Hamiltonian $H_1$ can only decrease the left hand side. Comparison with the $H_1$ solution gives the claim.
\end{proof}

Thus a Hamiltonian majorant gives a lower bound for the corresponding level set solution, while a Hamiltonian minorant gives an upper bound. This is the sign convention behind
  $G_+^p\le G_{\mathrm{unc}}^p$, 
  for every $p\in\mathbb R^2$.

\subsection{Support Functions and Credal Upper Expectations}\label{app:credal-background}

Let $K\subset\R^n$ be nonempty and compact. Its support function is
  $\sigma_K(q)\coloneqq \max_{v\in K}v\cdot q$.
The map $\sigma_K$ is convex, positively homogeneous, and globally Lipschitz, with Lipschitz constant at most $\max_{v\in K}|v|$. If
\[
  K=c+\sum_{i=1}^n[-a_i,a_i]e_i,
  \qquad a_i\ge0,
\]
then
\begin{equation}\label{eq:rectangle-support-background}
  \sigma_K(q)=c\cdot q+\sum_{i=1}^n a_i|q_i|.
\end{equation}

A \emph{credal set} is a nonempty weakly closed convex set of probability measures. Associated with $K$ is the weakly compact credal set
\[
  \mathfrak P(K)
  \coloneqq \clco\{\delta_v:v\in K\}
  \subset\Pp(\R^n),
\]
where $\Pp(\R^n)$ denotes the space of probability measures on $\R^n$. For an integrable observable $Z$, its upper expectation over a credal set $\mathfrak P$ is
\[
  \upperE_{\mathfrak P}[Z]
  \coloneqq \sup_{P\in\mathfrak P}\int Z \mathrm dP.
\]
For the linear observable $v\mapsto v\cdot q$,
\begin{equation*}
  \upperE_{\mathfrak P(K)}[v\cdot q]
  =\max_{v\in K}v\cdot q
  =\sigma_K(q).
\end{equation*}
Indeed, every probability measure supported on $K$ has expectation at most the maximum over $K$, while a Dirac mass at a maximizer attains that value. Hence, the support function and credal upper expectation descriptions used in the paper are exactly equivalent.
For the majorant in \Cref{sec:credal},
\[
  \mathcal K(x)
  =A V_0(x)
   +[-a_1(x),a_1(x)]e_1
   +[-a_2(x),a_2(x)]e_2,
\]
and \eqref{eq:rectangle-support-background} gives
\[
  \sigma_{\mathcal K(x)}(q)
  =A V_0(x)\cdot q+a_1(x)|q_1|+a_2(x)|q_2|
  =\Hhat(x,q).
\]

\subsection{Rectangularity and Deterministic Dynamic Programming}\label{app:control-background}

In this paper, \emph{rectangularity} has a precise pathwise meaning. At each state $x$ and time $t$, the coordinate controls $\eta_i(t)\in[-1,1]$ may be selected independently, and admissible controls remain admissible after concatenation at any deterministic time. This is the deterministic continuous time analogue of rectangular ambiguity in robust dynamic programming. Concatenation is the property that makes the Bellman recursion time consistent.

Let $c:\R^n\to\R^n$ and $a_i:\R^n\to[0,\infty)$ be bounded, Lipschitz, and periodic, and define
\[
  H(x,q)=c(x)\cdot q+\sum_{i=1}^n a_i(x)|q_i|.
\]
For $T>0$, let
  $\mathcal U_T
  \coloneqq 
  \{
    \eta:[0,T]\to[-1,1]^n :
    \eta \text{ is measurable}
  \}$.
For measurable controls $\eta:[0,\infty)\to[-1,1]^n$, consider
\begin{equation}\label{eq:support-control-background}
  \dot X(t)
  =-c(X(t))+\sum_{i=1}^n a_i(X(t))\eta_i(t)e_i,
  \qquad X(0)=x.
\end{equation}
If $g$ is continuous and has at most linear growth, set $W(x,t)\coloneqq \inf_\eta g(X^{x,\eta}(t))$.
The dynamic programming identity is
\begin{equation}\label{eq:DPP-background}
  W(x,t+h)
  =
  \inf_{\eta\in\mathcal U_h}
  W\bigl(X^{x,\eta}(h),t\bigr).
\end{equation}
Formally expanding \eqref{eq:DPP-background}, and rigorously interpreting the expansion in the viscosity sense, gives
\[
  W_t-\inf_{\eta\in[-1,1]^n}
       \left[-c(x)+\sum_i a_i(x)\eta_i e_i\right]\cdot DW=0.
\]
Since the control intervals are symmetric,
\[
  -\inf_\eta
       \left[-c(x)+\sum_i a_i(x)\eta_i e_i\right]\cdot q
  =c(x)\cdot q+\sum_i a_i(x)|q_i|
  =H(x,q).
\]
Consequently,
\begin{equation*}
  W_t+H(x,DW)=0,
  \qquad W(x,0)=g(x).
\end{equation*}
The minus sign in the drift of \eqref{eq:support-control-background} is therefore forced by the backward characteristic convention for the Cauchy problem. The planar periodic class
$g(x)=p\cdot x+u_0(x)$ used in the main proof is treated in
Appendix~\ref{app:control}.

\subsection{A Controlled First Exit Lemma}\label{app:invariance-background}

The trapping step uses only the following elementary robust invariance criterion.

\begin{lemma}[Invariant Segment Under Measurable Controls]\label{lem:segment-invariance-background}
Let $U$ be compact and let $F:\R^2\times U\to\R^2$ be continuous and locally Lipschitz in the state, uniformly in the control. Fix $x_0\in\R$, $r>0$, and
  $\mathcal D=[x_0-r,x_0+r]\times\{0\}$.
Assume that, for every $|z|\le r$ and every $\eta\in U$,
\[
  F_2((x_0+z,0),\eta)=0,
\]
and that there exists $\kappa>0$ such that
\[
  F_1((x_0+r,0),\eta)\le-\kappa,
  \qquad
  F_1((x_0-r,0),\eta)\ge\kappa,
  \qquad \eta\in U.
\]
Then, every Carath\'eodory solution of
\[
  \dot X(t)=F(X(t),\eta(t)),
  \qquad X(0)\in\mathcal D,
\]
with measurable $\eta(\cdot)$ remains in $\mathcal D$ for all positive times.
\end{lemma}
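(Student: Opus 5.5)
The plan is a first exit argument in two stages: the second coordinate is frozen at $0$, and then the first coordinate is trapped in $[x_0-r,x_0+r]$. Fix a measurable control $\eta(\cdot)$ and a Carath\'eodory solution $X$ with $X(0)\in\mathcal D$.

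The difficulty is that the hypotheses on $F$ hold only on $\mathcal D$, so we cannot conclude $X_2\equiv0$ directly. Instead we compare $X$ with an auxiliary one dimensional solution.

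\textbf{Step 1: the auxiliary scalar equation.} Extend the horizontal field off the segment by truncation: set
\[
  f(x_1,\eta)\coloneqq F_1\bigl((\pi_r(x_1),0),\eta\bigr),
\]
where $\pi_r$ is the projection of $\mathbb R$ onto $[x_0-r,x_0+r]$. Consider the scalar Carath\'eodory problem
\[
  \dot\xi=f(\xi,\eta(t)),\qquad \xi(0)=X_1(0).
\]
Because $F$ is locally Lipschitz in the state uniformly in the control and $\pi_r$ is $1$-Lipschitz, $f$ is Lipschitz in $\xi$ and measurable in $t$. Hence this problem has a unique global solution.

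\textbf{Step 2: the auxiliary solution stays in the interval.} Suppose $\xi$ leaves $[x_0-r,x_0+r]$, say through the right end. Let $t_1$ be the last time with $\xi(t_1)=x_0+r$ before some time $t_2$ with $\xi(t_2)>x_0+r$. On $(t_1,t_2)$ we have $\xi>x_0+r$, so $\pi_r(\xi)=x_0+r$. The endpoint hypothesis then gives $\dot\xi\le-\kappa$ almost everywhere on $(t_1,t_2)$. Integrating yields $\xi(t_2)\le x_0+r-\kappa(t_2-t_1)<x_0+r$, a contradiction. The left end is symmetric. Therefore $\xi(t)\in[x_0-r,x_0+r]$ for all $t\ge0$.

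\textbf{Step 3: identification by uniqueness.} Define $Y(t)\coloneqq(\xi(t),0)$. By Step 2, $Y(t)\in\mathcal D$, so $\pi_r(\xi)=\xi$ and $\dot Y_1=F_1(Y,\eta)$. The vertical hypothesis gives $F_2(Y,\eta)=0=\dot Y_2$. Thus $Y$ is a Carath\'eodory solution of the original system with the same initial datum as $X$.

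Since $F$ is locally Lipschitz in the state uniformly in the control, Carath\'eodory solutions are unique (Gronwall applied on compact time intervals). Hence $X=Y$, so $X(t)\in\mathcal D$ for all $t\ge0$.

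The only delicate point is Step 2. It uses only integrated inequalities, which is necessary because $\eta$ is merely measurable. It also needs $\kappa>0$ to exclude the solution lingering at or just beyond an endpoint. Uniqueness in Step 3 requires nothing beyond the stated uniform local Lipschitz assumption.
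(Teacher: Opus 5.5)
Your proof is correct. It shares the paper's skeleton: an auxiliary scalar problem on the axis, uniqueness to identify it with $X$, and a first exit argument. The organization and the key step differ, however.

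The paper works with the given trajectory directly. Uniqueness yields $X_2=0$ only up to a possible horizontal exit. That exit is then excluded by using compactness of $U$ and uniform continuity of $F$, so that the strict endpoint inequalities persist as $\dot X_1\le-\kappa/2$ and $\dot X_1\ge\kappa/2$ on two dimensional neighborhoods of the endpoints.

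Your projection $\pi_r$ instead gives a globally Lipschitz scalar problem whose field is frozen at the endpoint values outside the interval. Invariance is therefore settled entirely in one dimension, and uniqueness is invoked once, at the end. The endpoint hypotheses are used only on $\mathcal D$ itself, with no ``up to exit'' bookkeeping and no persistence step.

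Your argument is also slightly more general than you claim. Step~2 only needs $\dot\xi\le0$ a.e.\ on $(t_1,t_2)$, since that already gives $\xi(t_2)\le\xi(t_1)=x_0+r$. So your proof works verbatim with $\kappa=0$, and your closing remark that $\kappa>0$ is needed is inaccurate for your route. Strictness is what the paper's neighborhood argument requires, not yours.

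The paper's version, in turn, is the more standard strict inward pointing argument and avoids constructing a modified vector field. You leave one detail implicit: uniform local Lipschitz continuity yields a single Lipschitz constant on the compact segment, and on compact sets for the Gronwall step. This is standard.
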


\begin{proof}
As long as the first coordinate remains in $[x_0-r,x_0+r]$, the curve obtained by solving the restricted scalar equation on the axis and setting $X_2=0$ is a solution of the full controlled ODE. Uniqueness therefore gives $X_2(t)=0$ up to any possible horizontal exit.

By compactness of $U$ and uniform continuity of $F$, the strict endpoint inequalities persist in fixed neighborhoods of the two endpoints, uniformly in the control. In a neighborhood of the right endpoint one has $\dot X_1\le-\kappa/2$ almost everywhere, and in a neighborhood of the left endpoint one has $\dot X_1\ge\kappa/2$ almost everywhere. An absolutely continuous curve cannot reach either endpoint from the interior and then cross it while satisfying these inequalities. A first exit contradiction proves invariance.
\end{proof}

\subsection{Proximal Normals, Distance Derivatives, and Incompressibility}\label{app:proximal-background}

Let $D\subset\T_L^n$ be nonempty and closed. A unit vector $\nu$ is a
\emph{proximal outward normal} to $D$ at $x\in D$ if, for some
$r\in(0,L/2)$, the point $x$ is a nearest point in $D$ to
$x+r\nu$, in compatible flat coordinates. The proximal normal cone
$N_D^P(x)$ consists of $0$ and all positive multiples of such unit
proximal outward normals.
The covering radius of $D$ is
\[
  R(D)\coloneqq \max_{y\in\T_L^n}\dist(y,D).
\]
For $s\ge0$, its closed $s$-neighborhood is
\[
  D_s\coloneqq \{y\in\T_L^n:\dist(y,D)\le s\}.
\]

\begin{lemma}[Upper Derivative of the Distance]\label{lem:distance-Dini-background}
Let $y(\cdot)$ be differentiable at $t$, let $y(t)\notin D$, and suppose
\[
  \rho(t)\coloneqq \dist(y(t),D)<\frac L2.
\]
Choose a nearest point $x\in D$ and write
\[
  y(t)=x+\rho(t)\nu,
  \qquad |\nu|=1.
\]
Then
  $D^+\rho(t)\le \dot y(t)\cdot\nu$.
The same inequality holds almost everywhere for absolutely continuous curves.
\end{lemma}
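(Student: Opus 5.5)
The plan is to compare $\rho$ along $y(\cdot)$ with the distance to the single fixed point $x\in D$, and then differentiate that Euclidean distance.

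\emph{Step 1: upper bound by the fixed nearest point.} Since $x\in D$, for every $h$ we have
\[
  \rho(t+h)\le \dist(y(t+h),x).
\]
Equality holds at $h=0$, because $x$ is a nearest point. Since $\rho(t)<L/2$ and $y$ is continuous at $t$, for $|h|$ small the point $y(t+h)$ stays in a ball of radius $<L/2$ around $x$. In compatible flat coordinates, this ball is isometric to a Euclidean ball, so the quotient distance satisfies
\[
  \dist(y(t+h),x)=|y(t+h)-x|,
\]
where $y(t+h)$ denotes the lift near $x+\rho(t)\nu$.

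\emph{Step 2: differentiate the Euclidean norm.} The hypothesis $y(t)\notin D$ gives $\rho(t)>0$. Hence $z\mapsto|z-x|$ is differentiable at $y(t)=x+\rho(t)\nu$, with gradient $\nu$. By the chain rule,
\[
  |y(t+h)-x|=\rho(t)+h\,\dot y(t)\cdot\nu+o(h).
\]
Combining this with Step 1 gives
\[
  \frac{\rho(t+h)-\rho(t)}{h}\le \dot y(t)\cdot\nu+o(1)
\]
for $h>0$. Taking $\limsup_{h\downarrow0}$ yields $D^+\rho(t)\le\dot y(t)\cdot\nu$. The choice of nearest point does not matter: the bound holds for every nearest point $x$ and the corresponding $\nu$.

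\emph{Step 3: absolutely continuous curves.} An absolutely continuous curve is differentiable at almost every $t$, and at each such $t$ Steps 1--2 apply verbatim. So the inequality holds almost everywhere.

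The only delicate point, and the main thing to verify carefully, is the identification of the torus distance with the Euclidean distance in Step 1. This is exactly where the hypothesis $\rho(t)<L/2$, the injectivity radius, is used. With it, the nearest point $x$ and the unit vector $\nu$ are well defined in a single chart, and the local isometry holds for all small $|h|$.
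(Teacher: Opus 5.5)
Your proof is correct and is essentially the paper's argument: freeze the nearest point $x$ as a competitor, bound $\rho(t+h)\le|y(t+h)-x|$, expand $|y(t+h)-x|=\rho(t)+h\,\dot y(t)\cdot\nu+o(h)$, divide by $h>0$, and take the upper limit. One small remark: only the inequality $\dist(y(t+h),x)\le|y(t+h)-x|$ for a lift is needed, and it holds without the injectivity radius. Your claim that the ball is ``isometric to a Euclidean ball'' is also slightly too strong, since only distances to the center are preserved. Neither point affects the conclusion.
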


\begin{proof}
Keeping the nearest point $x$ fixed as a competitor,
\[
  \dist(y(t+h),D)
  \le |y(t+h)-x|
  =\rho(t)+h\dot y(t)\cdot\nu+o(h).
\]
Divide by $h>0$ and take the upper limit as $h\downarrow0$.
\end{proof}

If $V\in C^1(\T_L^n;\R^n)$ and $\Phi_t$ is its flow, Liouville's formula gives
\[
  \frac{d}{dt}\det D\Phi_t(x)
  =\operatorname{div}V(\Phi_t(x))\det D\Phi_t(x).
\]
Thus $\operatorname{div}V=0$ implies
  $\det D\Phi_t(x)=1$,
so $\Phi_t$ preserves Lebesgue measure. These two elementary facts are the geometric inputs in the no barrier argument of Section \ref{sec:no barrier}.

Finally, if $0\le a<b<R(D)$, then
  $\{y:a<\dist(y,D)<b\}$
is a nonempty open set and hence has positive Lebesgue measure. Therefore $|D_a|<|D_b|$. This strict increase of tubular volume is what contradicts an incompressible flow mapping a larger tube into a smaller one.

\subsection{Notation at a Glance}\label{app:notation}

\begin{center}
\begin{tabular}{@{}p{0.22\textwidth}p{0.70\textwidth}@{}}
\toprule
Symbol & Meaning \\
\midrule
$\T_L^n$ & Flat torus $\R^n/L\mathbb Z^n$; $\T^n=\T_1^n$. \\
$S_V$ & Strain tensor $(DV+DV^\top )/2$. \\
$d$ & Markstein parameter on the $2\pi$-periodic torus. \\
$A$ & Amplitude of the cellular flow $V_A=A V_0$. \\
$\mu$ & Dimensionless product $Ad$. \\
$\Phi(x)$ & Scalar field $\cos x_1\cos x_2$ in the cellular example. \\
$\Hunc,\Hcut,\Hhat$ & Uncut, physical cutoff, and convex majorant Hamiltonians. \\
$G_{\mathrm{unc}}^p,G_+^p,\widehat G^p$
& Corresponding level set solutions with planar initial datum
$p\cdot x$. \\
$\mathcal K(x)$ & Rectangular set of local velocities whose support function is $\Hhat(x,\cdot)$. \\
$\mathfrak P_x$ & Credal set $\clco\{\delta_v:v\in\mathcal K(x)\}$. \\
$\upperE_{\mathfrak P}$ & Upper expectation over a credal set $\mathfrak P$. \\
$\osc f$ & Oscillation $\sup f-\inf f$. \\
$R(D)$ & Covering radius $\max_y\dist(y,D)$. \\
$N_D^P(x)$ & Proximal normal cone of a closed set $D$ at $x$. \\
$D^+$ & Upper Dini derivative. \\
$\Phi_t$ & Fluid flow map in the no barrier argument; distinct from the scalar field $\Phi(x)$. \\
\bottomrule
\end{tabular}
\end{center}

\section{Control Representation for the Credal Envelope}
\label{app:control}

We prove the control representation in the planar periodic class needed
in the main argument. Let
  $U=[-1,1]^2$
and set
\[
  f(x,\eta)
  \coloneqq 
  -A V_0(x)
  +a_1(x)\eta_1e_1
  +a_2(x)\eta_2e_2,
  \qquad \eta\in U.
\]
The map $f$ is bounded and globally Lipschitz in $x$, uniformly in
$\eta$. Hence, for every measurable control
$\eta:[0,\infty)\to U$ and every $x\in\mathbb R^2$, the controlled
system
\[
  \dot X(t)=f(X(t),\eta(t)),
  \qquad X(0)=x,
\]
has a unique global Carath\'eodory solution, denoted
$X^{x,\eta}$.

Let $M$ be a uniform bound for $|f|$ and let $L$ be a uniform
Lipschitz constant in the state variable. Standard estimates for controlled trajectories
\cite[Chapter~III, Section~1]{BardiCapuzzoDolcetta1997}
give
\begin{equation}\label{eq:trajectory-estimates}
  |X^{x,\eta}(t)-x|\le Mt,
  \qquad
  |X^{x,\eta}(t)-X^{y,\eta}(t)|
  \le e^{Lt}|x-y|.
\end{equation}

Fix $p\in\mathbb R^2$, let $u_0$ be continuous and
$2\pi\mathbb Z^2$-periodic, and set
  $g(x)\coloneqq p\cdot x+u_0(x)$.
Define
\begin{equation*}
  W_g(x,t)
  \coloneqq
  \inf_{\eta\in\mathcal U}
  g(X^{x,\eta}(t)).
\end{equation*}
Although $g$ is unbounded, it is uniformly continuous and has linear
growth. Writing $M_0\coloneqq\|u_0\|_\infty$, the first estimate in
\eqref{eq:trajectory-estimates} gives, uniformly over the controls,
\[
  p\cdot x-|p|Mt-M_0
  \le g(X^{x,\eta}(t))
  \le p\cdot x+|p|Mt+M_0.
\]
Hence $W_g$ is finite. Moreover, boundedness of $f$ gives
\[
  |X^{x,\eta}(t)-X^{x,\eta}(s)|
  \le M|t-s|,
\]
while the second estimate in \eqref{eq:trajectory-estimates} controls
the dependence on the initial state. Since $g$ is uniformly continuous,
the functions
  $(x,t)\mapsto g(X^{x,\eta}(t))$
have a common modulus of continuity on every bounded time interval,
uniformly in $\eta$. Taking the infimum therefore shows that $W_g$ is
continuous.

For $T>0$, let
  $\mathcal U_T
  \coloneqq 
  \{\eta:[0,T]\to U:\eta\text{ is measurable}\}$.
If $\alpha\in\mathcal U_h$ and $\beta\in\mathcal U_t$, define their
concatenation by
\[
  (\alpha\star_h\beta)(s)
  \coloneqq 
  \begin{cases}
    \alpha(s),&0\le s\le h,\\
    \beta(s-h),&h<s\le h+t.
  \end{cases}
\]
Uniqueness of controlled trajectories gives
\[
  X^{x,\alpha\star_h\beta}(h+t)
  =
  X^{ X^{x,\alpha}(h),\beta}(t).
\]
Taking infima over $\alpha$ and $\beta$, and using
$\varepsilon$-optimal controls for the reverse inequality, yields the
dynamic programming identity \cite[Chapter~III, Section~3.1]{BardiCapuzzoDolcetta1997}
\begin{equation}\label{eq:DPP-app}
  W_g(x,t+h)
  =
  \inf_{\alpha\in\mathcal U_h}
  W_g\bigl(X^{x,\alpha}(h),t\bigr).
\end{equation}

We next identify the Hamilton-Jacobi equation satisfied by $W_g$.
We claim that $W_g$ is a viscosity solution of
\begin{equation}\label{eq:HJB-control-app}
  (W_g)_t
  -
  \inf_{\eta\in U}
  f(x,\eta)\cdot D W_g
  =0.
\end{equation}

To prove the subsolution property, let $\varphi\in C^1$ and suppose
that $W_g-\varphi$ has a local maximum at $(x_0,t_0)$, with $t_0>0$.
Fix $a\in U$ and use the constant control $\alpha\equiv a$ on
$[0,h]$. From \eqref{eq:DPP-app},
\[
  W_g(x_0,t_0)
  \le
  W_g(X^{x_0,a}(h),t_0-h).
\]
For sufficiently small $h$, the maximality of
$W_g-\varphi$ therefore gives
\[
  0
  \le
  \varphi(X^{x_0,a}(h),t_0-h)
  -\varphi(x_0,t_0).
\]
Dividing by $h$ and sending $h\downarrow0$ yields
\[
  \varphi_t(x_0,t_0)
  -f(x_0,a)\cdot D\varphi(x_0,t_0)
  \le0.
\]
Since this holds for every $a\in U$,
\[
  \varphi_t(x_0,t_0)
  -
  \inf_{a\in U}
  f(x_0,a)\cdot D\varphi(x_0,t_0)
  \le0.
\]

For the supersolution property, in the sense of
Appendix~\ref{app:viscosity}, suppose instead that
$W_g-\varphi$ has a local minimum at $(x_0,t_0)$. For each sufficiently
small $h>0$, choose $\alpha_h\in\mathcal U_h$ such that
\[
  W_g(X^{x_0,\alpha_h}(h),t_0-h)
  \le W_g(x_0,t_0)+h^2.
\]
The minimality of $W_g-\varphi$ implies
\[
  \varphi(X^{x_0,\alpha_h}(h),t_0-h)
  -\varphi(x_0,t_0)
  \le h^2.
\]
Set
\[
  \bar\alpha_h
  \coloneqq 
  \frac1h\int_0^h\alpha_h(s) ds\in U.
\]
Because $f(x,\eta)$ is affine in $\eta$ and uniformly Lipschitz in
$x$, the trajectory estimate gives
\[
  \frac{X^{x_0,\alpha_h}(h)-x_0}{h}
  -
  f(x_0,\bar\alpha_h)
  \longrightarrow0.
\]
By compactness of $U$, along a sequence $h_j\downarrow0$ one has
$\bar\alpha_{h_j}\to a_*\in U$. Dividing the previous test-function
inequality by $h_j$ and passing to the limit gives
\[
  \varphi_t(x_0,t_0)
  -f(x_0,a_*)\cdot D\varphi(x_0,t_0)
  \ge0.
\]
Consequently,
\[
  \varphi_t(x_0,t_0)
  -
  \inf_{a\in U}
  f(x_0,a)\cdot D\varphi(x_0,t_0)
  \ge0.
\]
This proves \eqref{eq:HJB-control-app} in the viscosity sense.
Since the control intervals are symmetric,
\begin{align*}
  -\inf_{\eta\in U} f(x,\eta)\cdot q
  =
  A V_0(x)\cdot q
  +a_1(x)|q_1|
  +a_2(x)|q_2|=\Hhat(x,q).
\end{align*}
Thus
\[
  (W_g)_t+\Hhat(x,DW_g)=0,
  \qquad
  W_g(x,0)=g(x).
\]

It remains to identify the appropriate periodicity class. Since
$f(x+2\pi k,\eta)=f(x,\eta)$ for every $k\in\mathbb Z^2$,
uniqueness of trajectories gives
\[
  X^{x+2\pi k,\eta}(t)
  =
  X^{x,\eta}(t)+2\pi k.
\]
Because
\[
  g(x+2\pi k)=g(x)+2\pi p\cdot k,
\]
we obtain
\[
  W_g(x+2\pi k,t)
  =
  W_g(x,t)+2\pi p\cdot k.
\]
Hence $W_g(x,t)-p\cdot x$ is periodic. Uniqueness from
\Cref{thm:comparison-background} identifies $W_g$ with the viscosity
solution in the planar periodic class.
Taking $u_0=0$ proves
\[
  \widehat G^p(x,t)
  =
  \inf_{\eta\in\mathcal U}
  p\cdot X^{x,\eta}(t),
\]
which is \eqref{eq:value-representation}.





\section{A Conditional Lean~4 Formalization of a Sufficient
{$p=e_1$} Subregime}
\label{app:lean-formalization}

This appendix records a Lean~4 encoding of the logical assembly of a
sufficient $p=e_1$ subregime of \Cref{thm:main}.  The formalized
parameter assumptions are
\[
  0<d<\frac1{60},
  \qquad
  1+6d^2\le Ad\le1+\frac d{10}.
\]
They imply the lower inequality in \eqref{eq:parameter-window}, since
\[
  (1+6d^2)^2-(1+4d^2)=8d^2+36d^4>0.
\]
Thus the Lean theorem remains a genuine, but deliberately restricted,
special case of the strengthened analytic result.  Viscosity comparison,
the Xin-Yu fast point theorem, the control representation, and the
trapping estimate are treated as four explicit hypotheses.  Conditional
on those inputs, the development derives the slow-point estimate,
transfers the fast point estimate, and proves incompatibility with a
phase-independent long run rate.
No \texttt{sorry}, \texttt{admit}, proof hole, or globally declared
axiom is used in that conditional assembly.

\subsection{Explicit Hypotheses of the Conditional Formalization}

The Lean theorem
\texttt{phaseDependentBurning\_of\_analytic\_inputs} receives the three
Hamiltonians, the three corresponding level-set functions, and the
auxiliary controlled trajectories as data. It also receives the
following solution and Hamiltonian order statements:
\begin{enumerate}[label=\textbf{(S\arabic*)}]
\item \texttt{hUncSolution}, \texttt{hCutSolution}, and
  \texttt{hHatSolution}: the three level set functions satisfy the
  abstract viscosity solution predicate for the uncut, physical cutoff,
  and convex majorant Hamiltonians, respectively;
\item \texttt{hUncCut}: \(H_{\mathrm{unc}}\le H_{+}\) pointwise;
\item \texttt{hCutHat}: \(H_{+}\le\widehat H\) pointwise.
\end{enumerate}

The four analytic inputs are the
following:
\begin{enumerate}[label=\textbf{(A\arabic*)}]
\item \texttt{hComparison} has type
  \texttt{ViscosityComparisonAssumption IsViscositySolution}. It states
  that, for common initial data, a pointwise increase of the Hamiltonian
  reverses the ordering of the corresponding viscosity solutions.

\item \texttt{hXinYu} has type
  \texttt{XinYuFastPointAssumption Gunc}. It supplies a universal
  constant \(C>0\) and the Xin-Yu lower bound
  \[
    \liminf_{t\to\infty}
    -\frac{G_{\mathrm{unc}}(x_{\mathrm f},t)}{t}
    \ge C\frac{A}{\log A}
  \]
  in the parameter regime needed by the manuscript.

\item \texttt{hControl} has type
  \texttt{ControlLowerEnvelopeAssumption trajectory Ghat}. It is the
  lower bound direction of the deterministic control representation:
  if every controlled terminal first coordinate is at least \(b\), then
  \(\widehat G\) is at least \(b\).

\item \texttt{hTrap} has type
  \texttt{TrappingAssumption trajectory}. Under the parameter window
  of \Cref{thm:main}, it states that every relevant auxiliary trajectory
  starting from \(x_{\mathrm s}\) has first coordinate at least
  \(\pi-3d\) at every nonnegative time.
\end{enumerate}

In the theorem declaration, these inputs appear explicitly as follows:
\begin{lstlisting}[style=lean4style,numbers=none]
(hComparison : ViscosityComparisonAssumption IsViscositySolution)
(hXinYu : XinYuFastPointAssumption Gunc)
(hControl : ControlLowerEnvelopeAssumption trajectory Ghat)
(hTrap : TrappingAssumption trajectory)
\end{lstlisting}
No assumption is introduced globally: all four are ordinary parameters
of the conditional theorem.
 
\subsection{Lean Source}

The complete Lean source is reproduced in
\Cref{lst:physical-cutoff-lean}. It defines the asymptotic notions used
in the statement, proves the elementary order and incompatibility
lemmas, and then assembles the analytic inputs into the conclusion of the
restricted $p=e_1$ subregime described above.

\begin{lstlisting}[
  style=lean4style,
  caption={Conditional Lean~4 assembly of the physical cutoff
  counterexample.},
  label={lst:physical-cutoff-lean}
]
import Mathlib

set_option autoImplicit false

namespace PhysicalCutoff

/-!
# Conditional assembly of a sufficient physical-cutoff subregime

This file isolates the three analytic inputs that are not yet formalized:

1. periodic viscosity comparison;
2. the Xin-Yu fast-point estimate;
3. the lower envelope part of the robust control representation,
   together with the trapping estimate for its trajectories.

They are theorem parameters, not global axioms. Under those explicit
assumptions, the file proves the logical assembly of a sufficient `p = e₁`
subregime of the manuscript's main theorem: the slow-point bound, the
transferred fast-point bound, and failure of a phase-independent long-run
rate. The formalized parameter window is deliberately narrower than the
sharp analytic window proved in the manuscript.
-/

abbrev Point := Fin 2 → ℝ
abbrev Hamiltonian := Point → Point → ℝ
abbrev LevelSet := Point → ℝ → ℝ
abbrev InitialDatum := Point → ℝ

def firstCoord (x : Point) : ℝ := x 0

def planarE1 : InitialDatum := firstCoord

def xSlow : Point := fun i => if i = (0 : Fin 2) then Real.pi else 0

def xFast : Point := fun i => if i = (0 : Fin 2) then Real.pi / 2 else 0

/-- Epsilon formulation of convergence as real time tends to `+∞`. -/
def TendsToAtTop (f : ℝ → ℝ) (L : ℝ) : Prop :=
  ∀ ε : ℝ, 0 < ε → ∃ T : ℝ, ∀ t : ℝ, T ≤ t → |f t - L| < ε

/-- An asymptotic upper bound, equivalent here to `limsup f ≤ a`. -/
def EventuallyUpperBoundAtTop (f : ℝ → ℝ) (a : ℝ) : Prop :=
  ∀ ε : ℝ, 0 < ε → ∃ T : ℝ, ∀ t : ℝ, T ≤ t → f t ≤ a + ε

/-- An asymptotic lower bound, equivalent here to `a ≤ liminf f`. -/
def EventuallyLowerBoundAtTop (f : ℝ → ℝ) (a : ℝ) : Prop :=
  ∀ ε : ℝ, 0 < ε → ∃ T : ℝ, ∀ t : ℝ, T ≤ t → a - ε ≤ f t

def rateAt (G : LevelSet) (x : Point) (t : ℝ) : ℝ :=
  -G x t / t

def HasPhaseIndependentRate (G : LevelSet) : Prop :=
  ∃ L : ℝ, ∀ x : Point, TendsToAtTop (rateAt G x) L

/-- Abstract viscosity solution predicate. -/
abbrev ViscositySolutionPredicate :=
  Hamiltonian → InitialDatum → LevelSet → Prop

/--
Periodic viscosity comparison, isolated as an explicit assumption.
For common initial data, increasing the Hamiltonian lowers the solution.
-/
def ViscosityComparisonAssumption
    (IsViscositySolution : ViscositySolutionPredicate) : Prop :=
  ∀ {H₁ H₂ : Hamiltonian} {g : InitialDatum} {G₁ G₂ : LevelSet},
    (∀ x q, H₁ x q ≤ H₂ x q) →
    IsViscositySolution H₁ g G₁ →
    IsViscositySolution H₂ g G₂ →
    ∀ x t, G₂ x t ≤ G₁ x t

/--
The specialization of the Xin-Yu fast point theorem needed for slope
`e₁`. The constant `C` is independent of `d` and `A`.
-/
def XinYuFastPointAssumption
    (Gunc : ℝ → ℝ → LevelSet) : Prop :=
  ∃ C : ℝ, 0 < C ∧
    ∀ d A : ℝ,
      0 < d →
      d < (1 : ℝ) / 4 →
      4 ≤ A →
      A * d ≤ 1 + d / 10 →
      0 < C * A / Real.log A ∧
      EventuallyLowerBoundAtTop
        (rateAt (Gunc d A) xFast)
        (C * A / Real.log A)

/--
The lower bound direction of the control representation. If every
controlled terminal first coordinate is at least `b`, then their value
function `Ghat` is at least `b`.
-/
def ControlLowerEnvelopeAssumption
    {Control : Type}
    (trajectory : ℝ → ℝ → Point → Control → ℝ → Point)
    (Ghat : ℝ → ℝ → LevelSet) : Prop :=
  ∀ d A x t b,
    (∀ η : Control, b ≤ firstCoord (trajectory d A x η t)) →
    b ≤ Ghat d A x t

/-- The invariant segment estimate for the auxiliary controlled system. -/
def TrappingAssumption
    {Control : Type}
    (trajectory : ℝ → ℝ → Point → Control → ℝ → Point) : Prop :=
  ∀ d A : ℝ,
    0 < d →
    d < (1 : ℝ) / 60 →
    1 + 6 * d ^ 2 ≤ A * d →
    A * d ≤ 1 + d / 10 →
    ∀ (η : Control) (t : ℝ),
      0 ≤ t →
      Real.pi - 3 * d ≤ firstCoord (trajectory d A xSlow η t)

theorem limit_le_of_tendsToAtTop_of_eventuallyUpperBound
    {f : ℝ → ℝ} {L a : ℝ}
    (hlim : TendsToAtTop f L)
    (hupper : EventuallyUpperBoundAtTop f a) :
    L ≤ a := by
  by_contra h
  have haL : a < L := lt_of_not_ge h
  have hε : 0 < (L - a) / 3 := by linarith
  obtain ⟨Tlim, hTlim⟩ := hlim ((L - a) / 3) hε
  obtain ⟨Tupper, hTupper⟩ := hupper ((L - a) / 3) hε
  let t : ℝ := max Tlim Tupper
  have htlim : Tlim ≤ t := by simp [t]
  have htupper : Tupper ≤ t := by simp [t]
  have habs : |f t - L| < (L - a) / 3 := hTlim t htlim
  have hlower : -((L - a) / 3) < f t - L := (abs_lt.mp habs).1
  have hu : f t ≤ a + (L - a) / 3 := hTupper t htupper
  linarith

theorem le_limit_of_tendsToAtTop_of_eventuallyLowerBound
    {f : ℝ → ℝ} {L a : ℝ}
    (hlim : TendsToAtTop f L)
    (hlower : EventuallyLowerBoundAtTop f a) :
    a ≤ L := by
  by_contra h
  have hLa : L < a := lt_of_not_ge h
  have hε : 0 < (a - L) / 3 := by linarith
  obtain ⟨Tlim, hTlim⟩ := hlim ((a - L) / 3) hε
  obtain ⟨Tlower, hTlower⟩ := hlower ((a - L) / 3) hε
  let t : ℝ := max Tlim Tlower
  have htlim : Tlim ≤ t := by simp [t]
  have htlower : Tlower ≤ t := by simp [t]
  have habs : |f t - L| < (a - L) / 3 := hTlim t htlim
  have hu : f t - L < (a - L) / 3 := (abs_lt.mp habs).2
  have hl : a - (a - L) / 3 ≤ f t := hTlower t htlower
  linarith

theorem no_common_limit_of_separated_asymptotic_bounds
    {f g : ℝ → ℝ} {a b : ℝ}
    (hab : a < b)
    (hupper : EventuallyUpperBoundAtTop f a)
    (hlower : EventuallyLowerBoundAtTop g b) :
    ¬ ∃ L : ℝ, TendsToAtTop f L ∧ TendsToAtTop g L := by
  rintro ⟨L, hf, hg⟩
  have hLa : L ≤ a :=
    limit_le_of_tendsToAtTop_of_eventuallyUpperBound hf hupper
  have hbL : b ≤ L :=
    le_limit_of_tendsToAtTop_of_eventuallyLowerBound hg hlower
  linarith

/-- Ordering the level set functions reverses their positive time rates. -/
theorem eventuallyLowerBoundAtTop_rate_mono
    {G₁ G₂ : LevelSet} {x : Point} {c : ℝ}
    (horder : ∀ t, G₁ x t ≤ G₂ x t)
    (hlower : EventuallyLowerBoundAtTop (rateAt G₂ x) c) :
    EventuallyLowerBoundAtTop (rateAt G₁ x) c := by
  intro ε hε
  obtain ⟨T, hT⟩ := hlower ε hε
  refine ⟨max T 1, ?_⟩
  intro t ht
  have htT : T ≤ t := le_trans (le_max_left T 1) ht
  have ht1 : (1 : ℝ) ≤ t := le_trans (le_max_right T 1) ht
  have htpos : 0 < t := lt_of_lt_of_le zero_lt_one ht1
  have hbase : c - ε ≤ rateAt G₂ x t := hT t htT
  have hneg : -G₂ x t ≤ -G₁ x t := neg_le_neg (horder t)
  have hinv : 0 ≤ t⁻¹ := inv_nonneg.mpr (le_of_lt htpos)
  have hrate : rateAt G₂ x t ≤ rateAt G₁ x t := by
    simp only [rateAt, div_eq_mul_inv]
    exact mul_le_mul_of_nonneg_right hneg hinv
  exact hbase.trans hrate

/-- A positive uniform lower bound on `G` gives asymptotic rate at most zero. -/
theorem eventuallyUpperBoundAtTop_rate_of_positive_lower
    {G : LevelSet} {x : Point} {b : ℝ}
    (hb : 0 < b)
    (hG : ∀ t, 0 ≤ t → b ≤ G x t) :
    EventuallyUpperBoundAtTop (rateAt G x) 0 := by
  intro ε hε
  refine ⟨1, ?_⟩
  intro t ht
  have htpos : 0 < t := lt_of_lt_of_le zero_lt_one ht
  have hGt : b ≤ G x t := hG t (le_of_lt htpos)
  have hGpos : 0 < G x t := hb.trans_le hGt
  have hneg : -G x t ≤ 0 := neg_nonpos.mpr (le_of_lt hGpos)
  have hinv : 0 ≤ t⁻¹ := inv_nonneg.mpr (le_of_lt htpos)
  have hrate : rateAt G x t ≤ 0 := by
    simp only [rateAt, div_eq_mul_inv]
    exact mul_nonpos_of_nonpos_of_nonneg hneg hinv
  linarith

theorem no_phaseIndependentRate_of_incompatible_bounds
    {G : LevelSet} {c : ℝ}
    (hc : 0 < c)
    (hslow : EventuallyUpperBoundAtTop (rateAt G xSlow) 0)
    (hfast : EventuallyLowerBoundAtTop (rateAt G xFast) c) :
    ¬ HasPhaseIndependentRate G := by
  rintro ⟨L, hL⟩
  exact
    no_common_limit_of_separated_asymptotic_bounds hc hslow hfast
      ⟨L, hL xSlow, hL xFast⟩

/--
Conditional form of a sufficient `p = e₁` subregime of the manuscript's
main theorem. The currently unformalized inputs occur only as the explicit
hypotheses `hComparison`, `hXinYu`, `hControl`, and `hTrap`.
-/
theorem phaseDependentBurning_of_analytic_inputs
    {Control : Type} [Nonempty Control]
    (IsViscositySolution : ViscositySolutionPredicate)
    (Hunc Hcut Hhat : ℝ → ℝ → Hamiltonian)
    (Gunc Gcut Ghat : ℝ → ℝ → LevelSet)
    (trajectory : ℝ → ℝ → Point → Control → ℝ → Point)
    (hUncSolution : ∀ d A, IsViscositySolution (Hunc d A) planarE1 (Gunc d A))
    (hCutSolution : ∀ d A, IsViscositySolution (Hcut d A) planarE1 (Gcut d A))
    (hHatSolution : ∀ d A, IsViscositySolution (Hhat d A) planarE1 (Ghat d A))
    (hUncCut : ∀ d A x q, Hunc d A x q ≤ Hcut d A x q)
    (hCutHat : ∀ d A x q, Hcut d A x q ≤ Hhat d A x q)
    (hComparison : ViscosityComparisonAssumption IsViscositySolution)
    (hXinYu : XinYuFastPointAssumption Gunc)
    (hControl : ControlLowerEnvelopeAssumption trajectory Ghat)
    (hTrap : TrappingAssumption trajectory) :
    ∃ C : ℝ, 0 < C ∧
      ∀ d A : ℝ,
        0 < d →
        d < (1 : ℝ) / 60 →
        1 + 6 * d ^ 2 ≤ A * d →
        A * d ≤ 1 + d / 10 →
        (∀ t : ℝ, 0 ≤ t → Real.pi - 3 * d ≤ Gcut d A xSlow t) ∧
        0 < C * A / Real.log A ∧
        EventuallyLowerBoundAtTop
          (rateAt (Gcut d A) xFast)
          (C * A / Real.log A) ∧
        ¬ HasPhaseIndependentRate (Gcut d A) := by
  rcases hXinYu with ⟨C, hC, hXinYu⟩
  refine ⟨C, hC, ?_⟩
  intro d A hd0 hd60 hwindowLower hwindowUpper

  have hdQuarter : d < (1 : ℝ) / 4 := by
    linarith
  have hA4 : 4 ≤ A := by
    by_contra h
    have hAlt : A < 4 := lt_of_not_ge h
    have hAdlt : A * d < 4 * d := mul_lt_mul_of_pos_right hAlt hd0
    have h4dlt : 4 * d < 1 := by linarith
    have hd2pos : 0 < d ^ 2 := by positivity
    have hAdgt : 1 < A * d := by nlinarith
    linarith

  rcases hXinYu d A hd0 hdQuarter hA4 hwindowUpper with
    ⟨hFastConstantPositive, hUncFast⟩

  have hCutLeUnc : ∀ x t, Gcut d A x t ≤ Gunc d A x t :=
    hComparison (hUncCut d A) (hUncSolution d A) (hCutSolution d A)

  have hHatLeCut : ∀ x t, Ghat d A x t ≤ Gcut d A x t :=
    hComparison (hCutHat d A) (hCutSolution d A) (hHatSolution d A)

  have hHatSlow :
      ∀ t : ℝ, 0 ≤ t → Real.pi - 3 * d ≤ Ghat d A xSlow t := by
    intro t ht
    exact hControl d A xSlow t (Real.pi - 3 * d)
      (fun η => hTrap d A hd0 hd60 hwindowLower hwindowUpper η t ht)

  have hCutSlow :
      ∀ t : ℝ, 0 ≤ t → Real.pi - 3 * d ≤ Gcut d A xSlow t := by
    intro t ht
    exact (hHatSlow t ht).trans (hHatLeCut xSlow t)

  have hCutFast :
      EventuallyLowerBoundAtTop
        (rateAt (Gcut d A) xFast)
        (C * A / Real.log A) :=
    eventuallyLowerBoundAtTop_rate_mono
      (fun t => hCutLeUnc xFast t) hUncFast

  have hSlowConstantPositive : 0 < Real.pi - 3 * d := by
    nlinarith [Real.pi_gt_three]

  have hCutSlowRate :
      EventuallyUpperBoundAtTop (rateAt (Gcut d A) xSlow) 0 :=
    eventuallyUpperBoundAtTop_rate_of_positive_lower
      hSlowConstantPositive hCutSlow

  have hNoPhaseIndependentRate :
      ¬ HasPhaseIndependentRate (Gcut d A) :=
    no_phaseIndependentRate_of_incompatible_bounds
      hFastConstantPositive hCutSlowRate hCutFast

  exact ⟨hCutSlow, hFastConstantPositive, hCutFast, hNoPhaseIndependentRate⟩

end PhysicalCutoff
\end{lstlisting}

\section{A Conditional Lean~4 Formalization of the
Quantitative No Barrier Theorem}
\label{app:lean-no-barrier-formalization}

This appendix records a Lean~4 encoding of the logical assembly of
\Cref{thm:no barrier}. The local drift estimate, the contraction estimate
for the distance from a closed set, preservation of volume, continuity from
above for contracting tubular neighbourhoods, and the strict growth of
tubular volume are treated as explicit hypotheses. Conditional on those
inputs, the Lean development constructs a positive admissible value of
$\varepsilon_0$, proves
  $R(D)\le 2d\varepsilon$, 
and derives the exact barrier conclusion $D=\T_L^n$ when
$\varepsilon=0$.

No \texttt{sorry}, \texttt{admit}, proof hole, or globally declared axiom is
used in this conditional assembly. Every analytic or geometric input that is
not formalized in the file appears as an ordinary parameter of the final
theorem.

\subsection{Abstract Data and Explicit Hypotheses}

The Lean theorem
\texttt{quantitativeNoBarrier\_of\_analytic\_inputs} is stated for abstract
types of points and directions. It receives the Hamiltonian $H$, the vector
field $V$, its forward flow, the distance to a set, the covering radius, the
volume functional, and predicates encoding closedness, $C^2$ regularity,
incompressibility, the physical Hamiltonian relation, and unit proximal
outward normals. These data are intended to represent their corresponding
objects on the flat torus $\T_L^n$.

The theorem also receives the structural assumptions
\texttt{hC2}, \texttt{hDiv}, and \texttt{hPhysical}, asserting respectively
that $V$ is $C^2$, that $V$ is divergence free, and that $H$ is the physical
strain Hamiltonian generated by $V$ and $d$. The remaining inputs are the
following.

\begin{enumerate}[label=\textbf{(A\arabic*)}]
\item \texttt{hLocalDrift} has type
\texttt{LocalDriftAssumption}. It states that, under the dimension,
periodicity, regularity, incompressibility, and physical Hamiltonian
hypotheses, the barrier condition implies the local drift certificate used in
the proof. Its intended content is the estimate
\[
  V(y)\cdot\nu
  \le
  \varepsilon-
  \frac{\operatorname{dist}(y,D)}{2d}
\]
for points whose distance from $D$ lies in $(0,r_0)$.

\item \texttt{hContraction} has type
\texttt{DistanceContractionAssumption}. It packages the upper derivative
estimate for the distance and the ensuing scalar comparison. More precisely,
if $2d\varepsilon<r<r_0$, then for every $t\ge0$,
\[
  \Phi_t(D_r)
  \subset
  D_{r_t},
  \qquad
  r_t
  =
  2d\varepsilon+
  (r-2d\varepsilon)e^{-t/(2d)}.
\]

\item \texttt{hVolume} has type
\texttt{FlowVolumeAssumption}. Its first component states that divergence
free flows preserve volume, while its second component records monotonicity
of volume under set inclusion.

\item \texttt{hTubeLimit} has type
\texttt{TubeLimitAssumption}. It is the continuity from above step for the
contracting family $D_{r_t}$ and yields
  $|D_r|\le |D_{2d\varepsilon}|$ 
from the family of inequalities
$|D_r|\le |D_{r_t}|$.

\item \texttt{hTubeGeometry} has type
\texttt{TubularGeometryAssumption}. Its first component states that
\[
  0\le a<b<R(D)
  \quad\Longrightarrow\quad
  |D_a|<|D_b|,
\]
while its second component states that a nonempty closed set with covering
radius at most zero is the whole torus.
\end{enumerate}

In the final theorem declaration, these inputs appear explicitly as follows,

\begin{lstlisting}[style=lean4style,numbers=none]
(hC2 : IsC2 V)
(hDiv : IsDivergenceFree V)
(hPhysical : IsPhysicalHamiltonian V d H)
(hLocalDrift :
  LocalDriftAssumption n L d r₀ V H IsC2 IsDivergenceFree
    IsPhysicalHamiltonian IsClosedSet
    IsUnitProximalOutwardNormal HasLocalDrift)
(hContraction :
  DistanceContractionAssumption d r₀ flow distToSet
    HasLocalDrift)
(hVolume :
  FlowVolumeAssumption V IsDivergenceFree flow volume)
(hTubeLimit : TubeLimitAssumption d distToSet volume)
(hTubeGeometry :
  TubularGeometryAssumption IsClosedSet distToSet R volume)
\end{lstlisting}

No assumption is introduced globally. All the displayed inputs are ordinary
parameters of
\texttt{quantitativeNoBarrier\_of\_analytic\_inputs}.

\subsection{Logical Assembly}

The Lean proof chooses
  $\varepsilon_0
  =
  \frac{r_0}{4d}$, 
which is positive because $r_0>0$ and $d>0$. If
$0\le\varepsilon\le\varepsilon_0$, then
$2d\varepsilon<r_0$. Suppose, for contradiction, that
$R(D)>2d\varepsilon$, and choose
\[
  r
  =
  \frac{2d\varepsilon+
  \min\{R(D),r_0\}}{2}.
\]
Then, 
  $2d\varepsilon<r<\min\{R(D),r_0\}$.
The local drift and contraction assumptions give
$\Phi_t(D_r)\subset D_{r_t}$. Volume preservation and monotonicity therefore
give
\[
  |D_r|
  =
  |\Phi_t(D_r)|
  \le
  |D_{r_t}|
\]
for every $t\ge0$. The tube limit assumption yields
  $|D_r|\le |D_{2d\varepsilon}|$.
On the other hand, strict tubular growth gives
  $|D_{2d\varepsilon}|<|D_r|$,
a contradiction. Hence $R(D)\le2d\varepsilon$.

For an exact barrier, the quantitative estimate at $\varepsilon=0$ gives
$R(D)\le0$. The zero radius component of
\texttt{hTubeGeometry} then gives $D=\T_L^n$.

\subsection{Lean Source}

The complete Lean source is reproduced in
\Cref{lst:physical-cutoff-no-barrier-lean}. It defines all abstract data and
assumption predicates, proves the contradiction between contraction and
volume preservation, constructs $\varepsilon_0$, and derives both conclusions
of \Cref{thm:no barrier}.

\begin{lstlisting}[
  style=lean4style,
  caption={Conditional Lean~4 assembly of the quantitative no barrier
  theorem.},
  label={lst:physical-cutoff-no-barrier-lean}
]
import Mathlib

set_option autoImplicit false

namespace PhysicalCutoff.NoBarrier

universe u v

/-!
# Conditional assembly of outward-barrier-certificate rigidity

This file formalizes the logical assembly of the manuscript's rigidity
theorem for outward barrier certificates. The geometric and analytic inputs that are not formalized
here are explicit theorem parameters, not global declarations.

Conditional on those inputs, the development constructs a positive admissible
error threshold, proves the estimate `R D ≤ 2 * d * ε`, and derives the exact
barrier consequence `D = Set.univ` when `ε = 0`.
-/

abbrev Hamiltonian (Point : Type u) (Direction : Type v) :=
  Point → Direction → ℝ

abbrev VectorField (Point : Type u) (Direction : Type v) :=
  Point → Direction

abbrev Flow (Point : Type u) :=
  ℝ → Point → Point

abbrev DistanceToSet (Point : Type u) :=
  Point → Set Point → ℝ

abbrev CoveringRadius (Point : Type u) :=
  Set Point → ℝ

abbrev Volume (Point : Type u) :=
  Set Point → ℝ

abbrev ClosedSetPredicate (Point : Type u) :=
  Set Point → Prop

abbrev UnitProximalOutwardNormalPredicate
    (Point : Type u) (Direction : Type v) :=
  Set Point → Point → Direction → Prop

abbrev VectorFieldPredicate (Point : Type u) (Direction : Type v) :=
  VectorField Point Direction → Prop

abbrev PhysicalHamiltonianPredicate
    (Point : Type u) (Direction : Type v) :=
  VectorField Point Direction → ℝ → Hamiltonian Point Direction → Prop

abbrev LocalDriftPredicate (Point : Type u) :=
  Set Point → ℝ → Prop

/-- The Hamiltonian is at most `ε` at every unit proximal outward normal of `D`. -/
def BarrierCondition
    {Point : Type u} {Direction : Type v}
    (H : Hamiltonian Point Direction)
    (IsUnitProximalOutwardNormal :
      UnitProximalOutwardNormalPredicate Point Direction)
    (D : Set Point) (ε : ℝ) : Prop :=
  ∀ x, x ∈ D →
    ∀ ν, IsUnitProximalOutwardNormal D x ν →
      H x ν ≤ ε

/-- The closed `r`-neighbourhood of `D`, expressed through an abstract distance-to-set map. -/
def ClosedTube
    {Point : Type u}
    (distToSet : DistanceToSet Point)
    (D : Set Point) (r : ℝ) : Set Point :=
  {x | distToSet x D ≤ r}

/--
The local analytic input. Under the dimension, periodicity-scale, positivity,
regularity, incompressibility, and physical-Hamiltonian hypotheses, a barrier
condition produces the local drift certificate used by the distance argument.
-/
def LocalDriftAssumption
    {Point : Type u} {Direction : Type v}
    (n : ℕ) (L d r₀ : ℝ)
    (V : VectorField Point Direction)
    (H : Hamiltonian Point Direction)
    (IsC2 : VectorFieldPredicate Point Direction)
    (IsDivergenceFree : VectorFieldPredicate Point Direction)
    (IsPhysicalHamiltonian : PhysicalHamiltonianPredicate Point Direction)
    (IsClosedSet : ClosedSetPredicate Point)
    (IsUnitProximalOutwardNormal :
      UnitProximalOutwardNormalPredicate Point Direction)
    (HasLocalDrift : LocalDriftPredicate Point) : Prop :=
  2 ≤ n →
  0 < L →
  0 < d →
  0 < r₀ →
  IsC2 V →
  IsDivergenceFree V →
  IsPhysicalHamiltonian V d H →
  ∀ {D : Set Point} {ε : ℝ},
    D.Nonempty →
    IsClosedSet D →
    BarrierCondition H IsUnitProximalOutwardNormal D ε →
    HasLocalDrift D ε

/--
The distance-evolution input. A local drift certificate yields the exponentially
contracting tube inclusion from the manuscript.
-/
def DistanceContractionAssumption
    {Point : Type u}
    (d r₀ : ℝ)
    (flow : Flow Point)
    (distToSet : DistanceToSet Point)
    (HasLocalDrift : LocalDriftPredicate Point) : Prop :=
  ∀ {D : Set Point} {ε r : ℝ},
    HasLocalDrift D ε →
    0 ≤ ε →
    2 * d * ε < r →
    r < r₀ →
    ∀ t : ℝ, 0 ≤ t →
      flow t '' ClosedTube distToSet D r ⊆
        ClosedTube distToSet D
          (2 * d * ε +
            (r - 2 * d * ε) * Real.exp (-t / (2 * d)))

/--
The measure-theoretic input: the flow preserves volume, and volume is monotone
under set inclusion.
-/
def FlowVolumeAssumption
    {Point : Type u} {Direction : Type v}
    (V : VectorField Point Direction)
    (IsDivergenceFree : VectorFieldPredicate Point Direction)
    (flow : Flow Point)
    (volume : Volume Point) : Prop :=
  (IsDivergenceFree V →
    ∀ (t : ℝ) (S : Set Point),
      volume (flow t '' S) = volume S) ∧
  (∀ {S T : Set Point},
    S ⊆ T → volume S ≤ volume T)

/--
Continuity from above for the contracting family of closed tubes appearing in
the proof.
-/
def TubeLimitAssumption
    {Point : Type u}
    (d : ℝ)
    (distToSet : DistanceToSet Point)
    (volume : Volume Point) : Prop :=
  ∀ {D : Set Point} {a r : ℝ},
    0 ≤ a →
    a < r →
    (∀ t : ℝ, 0 ≤ t →
      volume (ClosedTube distToSet D r) ≤
        volume
          (ClosedTube distToSet D
            (a + (r - a) * Real.exp (-t / (2 * d))))) →
    volume (ClosedTube distToSet D r) ≤
      volume (ClosedTube distToSet D a)

/--
The geometric input for tubular neighbourhoods: their volume grows strictly
below the covering radius, and covering radius zero forces a nonempty closed
set to be the whole state space.
-/
def TubularGeometryAssumption
    {Point : Type u}
    (IsClosedSet : ClosedSetPredicate Point)
    (distToSet : DistanceToSet Point)
    (R : CoveringRadius Point)
    (volume : Volume Point) : Prop :=
  (∀ {D : Set Point} {a b : ℝ},
    D.Nonempty →
    IsClosedSet D →
    0 ≤ a →
    a < b →
    b < R D →
    volume (ClosedTube distToSet D a) <
      volume (ClosedTube distToSet D b)) ∧
  (∀ {D : Set Point},
    D.Nonempty →
    IsClosedSet D →
    R D ≤ 0 →
    D = Set.univ)

/--
The contradiction between tube contraction, volume preservation, and strict
tubular growth gives the quantitative covering-radius estimate.
-/
theorem coveringRadius_le_of_contraction_and_volume
    {Point : Type u}
    (d r₀ ε : ℝ)
    (flow : Flow Point)
    (distToSet : DistanceToSet Point)
    (R : CoveringRadius Point)
    (volume : Volume Point)
    (IsClosedSet : ClosedSetPredicate Point)
    (HasLocalDrift : LocalDriftPredicate Point)
    (D : Set Point)
    (hd : 0 < d)
    (hε : 0 ≤ ε)
    (hscale : 2 * d * ε < r₀)
    (hDNonempty : D.Nonempty)
    (hDClosed : IsClosedSet D)
    (hLocalD : HasLocalDrift D ε)
    (hContraction :
      DistanceContractionAssumption d r₀ flow distToSet HasLocalDrift)
    (hVolumePreserving :
      ∀ (t : ℝ) (S : Set Point),
        volume (flow t '' S) = volume S)
    (hVolumeMono :
      ∀ {S T : Set Point}, S ⊆ T → volume S ≤ volume T)
    (hTubeLimit : TubeLimitAssumption d distToSet volume)
    (hStrictGrowth :
      ∀ {D : Set Point} {a b : ℝ},
        D.Nonempty →
        IsClosedSet D →
        0 ≤ a →
        a < b →
        b < R D →
        volume (ClosedTube distToSet D a) <
          volume (ClosedTube distToSet D b)) :
    R D ≤ 2 * d * ε := by
  by_contra hRadius
  have hRadiusLarge : 2 * d * ε < R D :=
    lt_of_not_ge hRadius
  have hBelowMin : 2 * d * ε < min (R D) r₀ :=
    (lt_min_iff).2 ⟨hRadiusLarge, hscale⟩

  let r : ℝ := (2 * d * ε + min (R D) r₀) / 2

  have hLower : 2 * d * ε < r := by
    dsimp [r]
    linarith

  have hUpperMin : r < min (R D) r₀ := by
    dsimp [r]
    linarith

  have hUpperRadius : r < R D :=
    lt_of_lt_of_le hUpperMin (min_le_left (R D) r₀)

  have hUpperScale : r < r₀ :=
    lt_of_lt_of_le hUpperMin (min_le_right (R D) r₀)

  have hBaseNonnegative : 0 ≤ 2 * d * ε := by
    positivity

  have hVolumeFamily :
      ∀ t : ℝ, 0 ≤ t →
        volume (ClosedTube distToSet D r) ≤
          volume
            (ClosedTube distToSet D
              (2 * d * ε +
                (r - 2 * d * ε) * Real.exp (-t / (2 * d)))) := by
    intro t ht
    have hInclusion :=
      hContraction hLocalD hε hLower hUpperScale t ht
    calc
      volume (ClosedTube distToSet D r) =
          volume (flow t '' ClosedTube distToSet D r) := by
            symm
            exact hVolumePreserving t (ClosedTube distToSet D r)
      _ ≤ volume
          (ClosedTube distToSet D
            (2 * d * ε +
              (r - 2 * d * ε) * Real.exp (-t / (2 * d)))) :=
        hVolumeMono hInclusion

  have hContractedVolume :
      volume (ClosedTube distToSet D r) ≤
        volume (ClosedTube distToSet D (2 * d * ε)) :=
    hTubeLimit hBaseNonnegative hLower hVolumeFamily

  have hStrictVolume :
      volume (ClosedTube distToSet D (2 * d * ε)) <
        volume (ClosedTube distToSet D r) :=
    hStrictGrowth hDNonempty hDClosed hBaseNonnegative hLower hUpperRadius

  exact (not_lt_of_ge hContractedVolume) hStrictVolume

/--
Conditional form of the manuscript's rigidity theorem for outward barrier
certificates. Every
unformalized geometric or analytic input occurs as an explicit hypothesis.
-/
theorem quantitativeNoBarrier_of_analytic_inputs
    {Point : Type u} {Direction : Type v}
    (n : ℕ) (L d r₀ : ℝ)
    (V : VectorField Point Direction)
    (H : Hamiltonian Point Direction)
    (flow : Flow Point)
    (distToSet : DistanceToSet Point)
    (R : CoveringRadius Point)
    (volume : Volume Point)
    (IsC2 : VectorFieldPredicate Point Direction)
    (IsDivergenceFree : VectorFieldPredicate Point Direction)
    (IsPhysicalHamiltonian : PhysicalHamiltonianPredicate Point Direction)
    (IsClosedSet : ClosedSetPredicate Point)
    (IsUnitProximalOutwardNormal :
      UnitProximalOutwardNormalPredicate Point Direction)
    (HasLocalDrift : LocalDriftPredicate Point)
    (hn : 2 ≤ n)
    (hL : 0 < L)
    (hd : 0 < d)
    (hr₀ : 0 < r₀)
    (hC2 : IsC2 V)
    (hDiv : IsDivergenceFree V)
    (hPhysical : IsPhysicalHamiltonian V d H)
    (hLocalDrift :
      LocalDriftAssumption n L d r₀ V H IsC2 IsDivergenceFree
        IsPhysicalHamiltonian IsClosedSet IsUnitProximalOutwardNormal
        HasLocalDrift)
    (hContraction :
      DistanceContractionAssumption d r₀ flow distToSet HasLocalDrift)
    (hVolume :
      FlowVolumeAssumption V IsDivergenceFree flow volume)
    (hTubeLimit : TubeLimitAssumption d distToSet volume)
    (hTubeGeometry :
      TubularGeometryAssumption IsClosedSet distToSet R volume) :
    ∃ ε₀ : ℝ, 0 < ε₀ ∧
      (∀ (ε : ℝ),
        0 ≤ ε →
        ε ≤ ε₀ →
        ∀ D : Set Point,
          D.Nonempty →
          IsClosedSet D →
          BarrierCondition H IsUnitProximalOutwardNormal D ε →
          R D ≤ 2 * d * ε) ∧
      (∀ D : Set Point,
        D.Nonempty →
        IsClosedSet D →
        BarrierCondition H IsUnitProximalOutwardNormal D 0 →
        D = Set.univ) := by
  rcases hVolume with ⟨hVolumePreservingOfDiv, hVolumeMono⟩
  rcases hTubeGeometry with ⟨hStrictGrowth, hZeroRadius⟩

  have hVolumePreserving :
      ∀ (t : ℝ) (S : Set Point),
        volume (flow t '' S) = volume S :=
    hVolumePreservingOfDiv hDiv

  let ε₀ : ℝ := r₀ / (4 * d)

  have hFourD : 0 < 4 * d := by
    positivity

  have hε₀Positive : 0 < ε₀ := by
    dsimp [ε₀]
    exact div_pos hr₀ hFourD

  have hQuantitative :
      ∀ (ε : ℝ),
        0 ≤ ε →
        ε ≤ ε₀ →
        ∀ D : Set Point,
          D.Nonempty →
          IsClosedSet D →
          BarrierCondition H IsUnitProximalOutwardNormal D ε →
          R D ≤ 2 * d * ε := by
    intro ε hε hεSmall D hDNonempty hDClosed hBarrier

    have hεSmall' : ε ≤ r₀ / (4 * d) := by
      simpa [ε₀] using hεSmall

    have hScaledBound : ε * (4 * d) ≤ r₀ :=
      (le_div_iff₀ hFourD).mp hεSmall'

    have hScale : 2 * d * ε < r₀ := by
      nlinarith

    have hLocalD : HasLocalDrift D ε :=
      hLocalDrift hn hL hd hr₀ hC2 hDiv hPhysical
        hDNonempty hDClosed hBarrier

    exact coveringRadius_le_of_contraction_and_volume
      d r₀ ε flow distToSet R volume IsClosedSet HasLocalDrift D
      hd hε hScale hDNonempty hDClosed hLocalD hContraction
      hVolumePreserving hVolumeMono hTubeLimit hStrictGrowth

  have hExact :
      ∀ D : Set Point,
        D.Nonempty →
        IsClosedSet D →
        BarrierCondition H IsUnitProximalOutwardNormal D 0 →
        D = Set.univ := by
    intro D hDNonempty hDClosed hBarrier
    have hRadius : R D ≤ 2 * d * 0 :=
      hQuantitative 0 le_rfl (le_of_lt hε₀Positive)
        D hDNonempty hDClosed hBarrier
    have hRadiusZero : R D ≤ 0 := by
      simpa using hRadius
    exact hZeroRadius hDNonempty hDClosed hRadiusZero

  exact ⟨ε₀, hε₀Positive, hQuantitative, hExact⟩

end PhysicalCutoff.NoBarrier
\end{lstlisting}


\begin{thebibliography}{99}

\bibitem{AugustinEtAl2014}
T. Augustin, F. P. A. Coolen, G. de Cooman, and M. C. M. Troffaes, eds.,
\emph{Introduction to Imprecise Probabilities}, Wiley, Chichester, 2014.

\bibitem{BardiCapuzzoDolcetta1997}
M. Bardi and I. Capuzzo-Dolcetta,
\emph{Optimal Control and Viscosity Solutions of Hamilton-Jacobi-Bellman Equations},
Birkh\"auser, Boston, 1997.

\bibitem{BardiTerrone2013}
M. Bardi and G. Terrone,
On the homogenization of some non-coercive Hamilton-Jacobi-Isaacs equations,
\emph{Commun. Pure Appl. Anal.} \textbf{12} (2013), 207--236.

\bibitem{Barles2007}
G. Barles,
Some homogenization results for non-coercive Hamilton-Jacobi equations,
\emph{Calc. Var. Partial Differential Equations} \textbf{30} (2007), 449--466.

\bibitem{BergerRiosInsuaRuggeri2000}
J. O. Berger, D. R\'ios Insua, and F. Ruggeri,
Bayesian robustness,
in \emph{Robust Bayesian Analysis},
D. R\'ios Insua and F. Ruggeri, eds.,
Lecture Notes in Statistics, vol.~152,
Springer, New York, 2000, pp.~1--32.


\bibitem{CaffarelliMonneau2014}
L. A. Caffarelli and R. Monneau,
Counter-example in three dimension and homogenization of geometric motions in two dimension,
\emph{Arch. Ration. Mech. Anal.} \textbf{212} (2014), 503--574.

\bibitem{CaprioChen2026}
M. Caprio and M. Chen,
Imprecise Markov semigroups and their ergodicity,
\emph{ESAIM Probab. Stat.}, 2026, doi:10.1051/ps/2026007.

\bibitem{CaprioEtAl2024}
M. Caprio, S. Dutta, K. J. Jang, V. Lin, R. Ivanov, O. Sokolsky, and I. Lee,
Credal Bayesian deep learning,
\emph{Trans. Mach. Learn. Res.}, 2024.

\bibitem{CaprioSultanaEliaCuzzolin2024}
M. Caprio, M. Sultana, E. Elia, and F. Cuzzolin,
Credal learning theory,
arXiv:2402.00957, 2024.

\bibitem{Cardaliaguet2010}
P. Cardaliaguet,
Ergodicity of Hamilton-Jacobi equations with a noncoercive nonconvex
Hamiltonian in $\mathbb R^2/\mathbb Z^2$,
\emph{Ann. Inst. H. Poincar\'e Anal. Non Lin\'eaire}
\textbf{27} (2010), 837--856.

\bibitem{CardaliaguetLionsSouganidis2009}
P. Cardaliaguet, P.-L. Lions, and P. E. Souganidis,
A discussion about the homogenization of moving interfaces,
\emph{J. Math. Pures Appl.} \textbf{91} (2009), 339--363.

\bibitem{CardaliaguetNolenSouganidis2011}
P. Cardaliaguet, J. Nolen, and P. E. Souganidis,
Homogenization and enhancement for the $G$-equation in periodic media,
\emph{Arch. Ration. Mech. Anal.} \textbf{199} (2011), 527--561.

\bibitem{ChenBerrettDamoulasCaprio2026}
M. Chen, T. B. Berrett, T. Damoulas, and M. Caprio,
Bulk-calibrated credal ambiguity sets: fast, tractable decision making
under out-of-sample contamination,
in \emph{Proceedings of the 43rd International Conference on Machine
Learning},
\emph{Proc. Mach. Learn. Res.} \textbf{306} (2026).

\bibitem{CrandallIshiiLions1992}
M. G. Crandall, H. Ishii, and P.-L. Lions,
User's guide to viscosity solutions of second order partial
differential equations,
\emph{Bull. Amer. Math. Soc. (N.S.)}
\textbf{27} (1992), 1--67.

\bibitem{DaviniSaonaZiliotto2026}
A. Davini, R. Saona, and B. Ziliotto,
Stochastic homogenization of Hamilton-Jacobi equations: a differential game approach,
\emph{Ann. Inst. H. Poincar\'e C Anal. Non Lin\'eaire} \textbf{43} (2026), 883--925.

\bibitem{DeCoomanHermansQuaeghebeur2009}
G. de Cooman, F. Hermans, and E. Quaeghebeur,
Imprecise Markov chains and their limit behavior,
\emph{Probab. Engrg. Inform. Sci.} \textbf{23} (2009), 597--635.

\bibitem{DuchiGlynnNamkoong2021}
J. C. Duchi, P. W. Glynn, and H. Namkoong,
Statistics of robust optimization: a generalized empirical likelihood approach,
\emph{Math. Oper. Res.} \textbf{46} (2021), 946--969.


\bibitem{EvansSouganidis1984}
L. C. Evans and P. E. Souganidis,
Differential games and representation formulas for solutions of Hamilton-Jacobi--Isaacs equations,
\emph{Indiana Univ. Math. J.} \textbf{33} (1984), 773--797.

\bibitem{GaoLongXinYu2024}
H. Gao, Z. Long, J. Xin, and Y. Yu,
Existence of an effective burning velocity in a cellular flow for the curvature $G$-equation proved using a game analysis,
\emph{J. Geom. Anal.} \textbf{34} (2024), Article 81.

\bibitem{HermansDeCooman2012}
F. Hermans and G. de Cooman,
Characterisation of ergodic upper transition operators,
\emph{Internat. J. Approx. Reason.} \textbf{53} (2012), 573--583.

\bibitem{HuellermeierWaegeman2021}
E. H\"ullermeier and W. Waegeman,
Aleatoric and epistemic uncertainty in machine learning: an introduction to concepts and methods,
\emph{Machine Learning} \textbf{110} (2021), 457--506.

\bibitem{Isaacs1965}
R. Isaacs,
\emph{Differential Games}, Wiley, New York, 1965.

\bibitem{Iyengar2005}
G. N. Iyengar,
Robust dynamic programming,
\emph{Math. Oper. Res.} \textbf{30} (2005), 257--280.

\bibitem{Levi1980}
I. Levi,
\emph{The Enterprise of Knowledge: An Essay on Knowledge, Credal Probability, and Chance},
MIT Press, Cambridge, MA, 1980.

\bibitem{LiuXinYu2013}
Y.-Y. Liu, J. Xin, and Y. Yu,
A numerical study of turbulent flame speeds of curvature and strain $G$-equations in cellular flows,
\emph{Phys. D} \textbf{243} (2013), 20--31.

\bibitem{Markstein1964}
G. H. Markstein, ed.,
\emph{Nonsteady Flame Propagation}, AGARDograph 75,
Pergamon Press, Oxford, 1964.

\bibitem{MitakeMooneyTranXinYu2025}
H. Mitake, C. Mooney, H. V. Tran, J. Xin, and Y. Yu,
Bifurcation of homogenization and nonhomogenization of the curvature $G$-equation with shear flows,
\emph{Math. Ann.} \textbf{391} (2025), 3077--3111.

\bibitem{NilimElGhaoui2005}
A. Nilim and L. El Ghaoui,
Robust control of Markov decision processes with uncertain transition matrices,
\emph{Operations Research} \textbf{53} (2005), 780--798.

\bibitem{OsherSethian1988}
S. Osher and J. A. Sethian,
Fronts propagating with curvature-dependent speed: algorithms based on Hamilton-Jacobi formulations,
\emph{J. Comput. Phys.} \textbf{79} (1988), 12--49.

\bibitem{Peng2007}
S. Peng,
$G$-expectation, $G$-Brownian motion and related stochastic calculus of
It\^o type,
in \emph{Stochastic Analysis and Applications},
Abel Symp., vol.~2,
Springer, Berlin, 2007, pp.~541--567.

\bibitem{Peng2008}
S. Peng,
A new central limit theorem under sublinear expectations,
arXiv:0803.2656, 2008.

\bibitem{Peters2000}
N. Peters,
\emph{Turbulent Combustion}, Cambridge University Press, Cambridge, 2000.

\bibitem{Puterman1994}
M. L. Puterman,
\emph{Markov Decision Processes: Discrete Stochastic Dynamic Programming},
Wiley, New York, 1994.

\bibitem{TJoensDeBock2021}
N. T'Joens and J. De Bock,
Average behaviour in discrete time imprecise Markov chains: a study of weak ergodicity,
\emph{Internat. J. Approx. Reason.} \textbf{132} (2021), 181--205.

\bibitem{TroffaesDeCooman2014}
M. C. M. Troffaes and G. de Cooman,
\emph{Lower Previsions}, Wiley, Chichester, 2014.

\bibitem{Walley1991}
P. Walley,
\emph{Statistical Reasoning with Imprecise Probabilities}, Chapman and Hall, London, 1991.

\bibitem{WiesemannKuhnRustem2013}
W. Wiesemann, D. Kuhn, and B. Rustem,
Robust Markov decision processes,
\emph{Math. Oper. Res.} \textbf{38} (2013), 153--183.

\bibitem{Williams1985}
F. A. Williams,
Turbulent combustion,
in \emph{The Mathematics of Combustion}, J. D. Buckmaster, ed., SIAM, Philadelphia, 1985, pp. 97--131.

\bibitem{XinYu2010}
J. Xin and Y. Yu,
Periodic homogenization of the inviscid $G$-equation for incompressible flows,
\emph{Commun. Math. Sci.} \textbf{8} (2010), 1067--1078.

\bibitem{XinYu2014}
J. Xin and Y. Yu,
Front quenching in the $G$-equation model induced by straining of cellular flow,
\emph{Arch. Ration. Mech. Anal.} \textbf{214} (2014), 1--34.

\bibitem{XinYuRonney2024}
J. Xin, Y. Yu, and P. Ronney,
Lagrangian, game theoretic and PDE methods for averaging $G$-equations in turbulent combustion: existence and beyond,
\emph{Bull. Amer. Math. Soc.} \textbf{61} (2024), 470--514.

\end{thebibliography}
\end{document}